\documentclass[12pt]{amsart}
\usepackage{amsfonts, amsmath, amssymb,amsthm}
\usepackage{amsfonts}
\usepackage{enumitem}
\usepackage[a4paper]{geometry}
\usepackage{mathtools}

\usepackage{algorithm}
\usepackage{algorithmic}

\usepackage{color}

\usepackage{hyperref}

\newtheorem{theorem}{Theorem}[section]
\newtheorem{corollary}[theorem]{Corollary}

\newtheorem{lemma}[theorem]{Lemma}
\newtheorem{proposition}[theorem]{Proposition}

\theoremstyle{definition}
\newtheorem{example}[theorem]{Example}
\newtheorem{examples}[theorem]{Examples}
\newtheorem{definition}[theorem]{Definition}

\newtheorem{remark}[theorem]{Remark}
\newtheorem{remarks}[theorem]{Remarks}

\newcommand{\half}{\frac{1}{2}}

\newcommand{\TT}{\mathbb{T}}

\newcommand{\RR}{\mathbb{R}}
\newcommand{\ZZ}{\mathbb{Z}}
\newcommand{\Projn}{\Ran / \RR \One}
\newcommand{\Rn}{\RR^{n}}
\newcommand{\Zn}{\ZZ^{n}}
\newcommand{\Ran}{\RR^{n+1}}

\newcommand{\ta}{\oplus}
\newcommand{\tm}{\odot}

\newcommand{\varx}{{\bf x}}
\newcommand{\vary}{{\bf y}}

\newcommand{\varv}{{\bf v}}

\newcommand{\varp}{{\bf p}}

\newcommand{\One}{{\bf 1}}
\newcommand{\Zero}{{\bf 0}}

\newcommand{\tnorm}[1]{\norm{#1}_{\mathrm{tr}} \,}
\newcommand{\dualtnorm}[1]{\norm{#1}^*_{\mathrm{tr}} \,}
\newcommand{\Bn}{B_{\mathrm{tr}}^n}
\newcommand{\dualBn}{B^{n*}_{\mathrm{tr}}}
\newcommand{\BRn}{B_{\mathrm{tr},R}^n}
\newcommand{\BRtwo}{B_{\mathrm{tr},R}^2}
\newcommand{\Ben}{B_{\mathrm{tr},\varepsilon}^n}

\newcommand{\Bthree}{B_{\mathrm{tr}}^3}

\newcommand{\Bdn}{B_{\mathrm{tr},\delta}^n}

\newcommand{\SRan}{S_{\mathrm{tr},R}^{n-1}}

\newcommand{\ltr}[1]{l_{\mathrm{tr}}(#1)}

\newcommand{\In}[1]{I_{#1}^n}

\newcommand{\Idn}[1]{I_{#1,\delta}^n}
\newcommand{\vol}[1]{{\mathcal{V}(#1)}}
\newcommand{\area}[1]{{\mathcal{A}(#1)}}
\newcommand{\tvol}[1]{{\mathcal{V}_{\mathrm{tr}}(#1)}}
\newcommand{\tarea}[1]{{\mathcal{A}_{\mathrm{tr}}(#1)}}
\newcommand{\art}{\mathcal{A}_{\mathrm{tr}}}
\newcommand{\lent}{l_{\mathrm{tr}}}

\newcommand{\norm}[1]{\parallel #1 \parallel}
\newcommand{\abs}[1]{\lvert #1 \rvert}

\numberwithin{equation}{section}

\usepackage{pgfplots}
\pgfplotsset{compat = newest}
\usepgfplotslibrary{fillbetween}

\usepackage{calc}
\usepackage{tikz}
\usetikzlibrary{decorations.markings}
\tikzstyle{vertex}=[circle, draw, inner sep=0pt, minimum size=6pt]
\usepackage{graphicx}
\graphicspath{ {./images/} }
\usetikzlibrary{shapes.geometric}
\usetikzlibrary{shapes.geometric}
\usetikzlibrary{arrows}

\begin{document}
\title[]{Tropical measures, anisotropic isoperimetric inequality and honeycomb}
\author{Amnon Rosenmann}
\email[]{rosenmann@math.tugraz.at}
%\affil{
	%Institute of Discrete Mathematics \\
	%Graz University of Technology, Graz, Austria \\
%	rosenmann@math.tugraz.at}
\date{}
%\date{\thanks{} \today}
%\maketitle

\begin{abstract}
We introduce a tropical spherical measure on $\mathbb{R}^n$ that is based on the tropical metric and is an analogue of spherical Hausdorff measure. This measure is translation invariant but, unlike Lebesgue measure, is not invariant under rotations or reflections. It agrees with Lebesgue measure on $n$-dimensional (but not on $k$-dimensional, $k<n$) measurable subsets of $\mathbb{R}^n$, and on rectifiable curves it recovers tropical length.
In dimension $2$ we prove a sharp tropical isoperimetric inequality, with equality precisely for tropical disks, and deduce a tropical honeycomb theorem. We also introduce a tropical analogue of Minkowski content and show that the tropical ball is the associated Wulff shape. This yields an anisotropic type of the tropical isoperimetric problem and consequently a tropical honeycomb theorem in $\mathbb{R}^n$.
Finally, we describe the tropical dual norm and dual ball, compare the tropical spherical and Minkowski surface measures, and prove that they agree in the plane and on polytopes in $\mathbb{R}^n$ whose facets are parallel to facets of the tropical ball or its dual.

\end{abstract}
\subjclass[2020]{14T99, 15A80, 28A75, 49Q05, 52A38, 52A40}
\keywords{tropical measure, tropical balls, tropical isoperimetric inequality, anisotropic isoperimetry problem, tropical honeycomb}
\maketitle
\tableofcontents{}

\section{Introduction}
\label{sec:intro}
The isoperimetric inequality is one of the foundational problems in geometric measure theory, studied in various ambient spaces and under various measures and metrics. Here we look at it in the setting of tropical geometry, in which it is an anisotropic isoperimetry problem.
For this end we introduce tropical measures that are based on the tropical metric that was introduced by Cohen, Gaubert and Quadrat \cite{CGQ04}. 
These measures are applied to rectifiable subsets of $\Rn$ and not just confined to polytopes or piecewise linear spaces.
As a consequence of the tropical isoperimetric inequality we obtain a tropical honeycomb theorem in $\Rn$.

Geometric Measure Theory (GMT) \cite{Fed69, Sim83, Mag12, Mor16} emerged in the second half of the 20th century to analyze geometric variational problems, such as minimal surface.
It extends the smooth framework of differential geometry to accommodate singularities and irregularities to handle  rectifiable sets, currents, and varifolds.
 
A more recent field is tropical geometry.
It may be seen as a degeneration of complex structures into piecewise linear and polyhedral structures through Maslov's log-limit ``dequantization'' (see \cite{KM97}).
Tropical geometry studies polynomials and varieties over the semifield of extended real numbers with $+\infty$ (or $-\infty$) and the idempotent operation of minimum (or maximum) in place of addition and the operation of addition in place of multiplication.
The result is a kind of linear and combinatorial and, in a way, simpler version of algebraic geometry. Applications are, among others, in algebraic geometry, optimization problems, discrete event dynamical systems, neural networks and string theory. For the main source on tropical geometry we refer to the book of Maclagan and Sturmfels \cite{MS15}; other selected sources are \cite{IMS09}, \cite{Alg13}, \cite{MR}, \cite{PS05}, \cite{Gro15},\cite{Jos21}. 

A natural question is whether one can transport GMT-type questions to a tropical setting.
This raises the following problem:
how to define the basic notion of measure or volume in the tropical or idempotent context?
Let us look at some of the approaches that were chosen for that matter. 

Maslov \cite{Mas87} (see also \cite{D-MD98}, \cite{Aki99}, ) defined an idempotent measure $\mu$ that is an extreme measure in the sense that $\mu(A \cup B) = \max(\mu(A), \mu(B))$ (or $\min(\mu(A), \mu(B))$), followed by a natural analogue of Lebesgue integral. These notions lead to the introduction by Maslov of an optimization theory as an analogue of probability and stochastic theory, with applications to areas like optimal control and Hamilton--Jacobi equations.

A notion of tropical volume of a polytope, given as the rows of an $n \times n$ matrix $A$, $n \geq 2$, was introduced by Depersin, Gaubert and Joswig \cite{DGJ17}. It is defined as the gap between the maximum assignment problem (the tropical permanent of $A$) and the second best assignment. The volume is zero, that is, the maximum is achieved at least twice, exactly when the rows (or columns) of A are contained in a tropical hyperplane \cite{R-GST05}. In dimension $n=2$ it equals the tropical distance.
The authors also proved an isodiametric inequality theorem with respect to their notion of volume. 

Another definition of a volume of a tropical polytope (a finitely generated tropical convex set) was introduced by Loho and Schymura \cite{LS20}. Their tropical barycentric volume uses tropical analogues of lattice point counting and Ehrhart polynomial, where their lattice is defined on a logarithmic scale.

We mention another work, that of Lee, Li, Lin and Monod \cite{LLLM22}, which uses the tropical metric in dynamic optimal transport, developing tropical analogues of Wasserstein-$p$ distances between probability spaces. When $p=1$ they get an efficient computation of geodesics on the tropical projective torus.

Our aim here is different: we introduce analogues of classical measures in a tropical setting but on general rectifiable subsets of $\Rn$ and not only on polytopes. The measures are based on the tropical metric that was introduced by Cohen, Gaubert and Quadrat in \cite{CGQ04} (see also \cite{Pue14}), an additive version of Hilbert projective metric.

The major tropical $k$-dimensional measure that we introduce, denoted $\sigma^k_{\mathrm{tr}}$, is defined on the tropical projective torus $\Ran / \RR \One$, identified with $\Rn$.
It is a tropical analogue of the $k$-dimensional Hausdorff spherical measure, where $\sigma^k_{\mathrm{tr}}(S)$ is computed via a limit of coverings of $S \subset \Rn$ by tropical balls of dimension $n$.
This measure coincides with Lebesgue measure on $n$-dimensional (Lebesgue-measurable) subsets of $\Rn$, but differs from it on $k$-dimensional subsets of $\Rn$, for $k<n$.
This follows from the fact that $\sigma^k_{\mathrm{tr}}$, like the tropical metric, is not invariant under Lebesgue isometries of $\Rn$: it is invariant under translations, but not under rotations or reflections.
On a rectifiable curve $\gamma$, $\sigma^1_{\mathrm{tr}}(\gamma)$ coincides with the standard definition through the method of rectification by tropical line segments.

Another measure that we introduce is a tropical analogue of Minkowski measure and Minkowski-Steiner surface area. The surface area with respect to the tropical Hausdorff spherical measure and the tropical Minkowski measure is the same for subsets of the plane and also in $\Rn$ when the surface is composed of facets that are parallel to facets of the tropical ball or its dual.
We provide a direct proof of the tropical isoperimetric inequality in the plane. In $\Rn$ the inequality follows from the theory of anisotropic isoperimetric inequality (see, e.g. \cite{Mag12}, \cite{deros24}).

The anisotropic isoperimetric inequality is a generalization of the classical one. In the latter (standard) balls enclose maximum volume for a fixed surface area. In the anisotropic case we are dealing with a direction-dependent ``surface tension", or norm, so that the ``cost" to enclose volume changes with direction, leading to optimal shapes like convex polytopes (Wulff shapes) instead of balls. Applications are, for example, in determining the equilibrium shape, due to surface energy minimization, of crystals or liquid drops of fixed volume inside a separate phase.

\medskip
\noindent\textbf{Organization.}
Sections 2, 3 and 4 give a background on tropical metric and tropical balls. Section~\ref{sec:proj} contains a short review of tropical algebra and the construction of the tropical projective torus.
In Section~\ref{sec:metric} we recall the definition of tropical metric and then, from \cite{Ros26}, the definitions of tropical length of rectifiable curves and of tropical geodesics. Following, we recall in Section~\ref{sec:ball} some characteristics of tropical balls (also from \cite{Ros26}), like forming a honeycomb tiling of $\Rn$.

In Section~\ref{sec:tropical_measure} we define the tropical spherical measure and show that it equals Lebesgue measure on $n$-dimensional subsets of $\Rn$. We also discuss some alternative ways to define a tropical measure.
Then we show in Section~\ref{sec:surface} that the ratio of surface area to volume (with respect to the tropical spherical measure) in tropical balls is the same as in standard balls.

In Section~\ref{sec:isoperim_R2} we prove the isoperimetric inequality in the plane with respect to the tropical measure, followed by a tropical honeycomb theorem in the plane in 
Section~\ref{sec:honecomb_R2}.	
Section \ref{sec:isoperim_Rn} is about the anisotropic isoperimetric inequality in $\Rn$.
In the first two subsections we give a short description of the isoperimetric and anisotropic isoperimetric inequalities in $\Rn$. Then we describe the tropical dual unit ball which is the Wulff shape with respect to the tropical surface tension.
Next, we show that the tropical ball is the Wulff shape with respect to the tropical Minkowski measure and that the tropical (spherical) measure and the tropical Minkowski measure agree on the area of $\partial{S}$ when $S$ is in the plane or when $S$ is a polytope with facets parallel to those of the tropical ball or its dual, but in general, these measures do not agree with each other.
We conclude in Section~\ref{sec:honecomb_Rn} with a tropical honeycomb theorem in $\Rn$.

\section{The tropical projective torus}
\label{sec:proj}
Tropical algebra is part of ``idempotent mathematics'', which was developed mainly by Maslov and his collaborators (see \cite{Lit07} for a brief introduction).
The (min) {\it tropical semifield} (or {\it min-plus algebra}) ($\TT,\ta, \tm$) consists of the set $\TT = \RR \cup \{\infty\}$ on which the operations of tropical addition $\ta$ and tropical multiplication $\tm$ are defined by
\begin{equation}
	a \ta b := \min(a,b) , \qquad a \tm b := a+b.
\end{equation}
The identity element for addition is $\infty$ and the identity element for multiplication is $0$.
The $n$-dimensional semimodule $\TT^n$ over $\TT$ is then defined with respect to the above operations.
The {\it max-plus algebra} (see the monograph of Butkovi{\v{c}} \cite{But10}) over $\RR \cup \{-\infty\}$ is defined in a similar way, where addition is replaced by the maximum operation and $-\infty$ is the identity element for addition.

Since we are only interested here in bounded objects, we work over $\TT^{\times} = \TT~\setminus~\{\infty\} = \RR$ and the $n$-dimensional {\it tropical projective torus} $\Projn$, where $\One = (1,\ldots,1)$, that is, for all $a \in \RR$, $(x_1,\ldots,x_{n+1}) \sim (x_1+a,\ldots,x_{n+1}+a)$.

For each element of $\Projn$, written in homogeneous coordinates as $(x_1 : \ldots : x_{n+1})$, we can choose as a representative the element $(x_1 - x_{n+1} : \ldots : x_n - x_{n+1}:0)$, and then map it bijectively to the element $(x_1 - x_{n+1}, \ldots, x_n - x_{n+1})$ of $\Rn$. 
In what follows, when we use the coordinates $(x_1,\ldots,x_{n})$ we refer to this representation of $(x_1 : \ldots : x_n : 0) \in \Projn$ as an element of $\Rn$.
For convenience, we also make use of the following coordinate system of $\Rn$.
Let the {\it (min) tropical standard unit vectors} be ${\bf \tilde{e}_1} = (1,0,\ldots,0), \ldots, {\bf \tilde{e}_n} = (0,\ldots,0,1)$ and ${\bf \tilde{e}_{n+1}} = -\One =(-1,\ldots,-1)$. These vectors are defined in \cite{Ros26} to be tropically orthogonal.
Then we define the (min) tropical coordinate system of $\Rn$ with respect to the axes in the $n+1$ directions of these unit vectors.
$\Rn$ is then decomposed into $n+1$ orthants $\Rn_j$, for $j=1,\ldots, n+1$, where $\Rn_j$ is the positive cone determined by the $n$ vectors ${\bf \tilde{e}_1},\ldots,{\bf \widehat{\tilde{e}}_j},\ldots,{\bf \tilde{e}_{n+1}}$.

\section{Tropical metric}
\label{sec:metric}
The tropical metric introduced by Cohen, Gaubert and Quadrat in \cite{CGQ04} is defined as follows.
The {\it tropical distance} in $\Projn$ between $\varx = (x_1 : \ldots : x_{n+1})$ and $\vary = (y_1 : \ldots : y_{n+1})$ is
\begin{equation}
	\label{eq:dist}
	d_{\mathrm{tr}}(\varx, \vary) := \max_{1 \leq i,j \leq n+1} \{x_i - y_i -x_j + y_j\}.
\end{equation}
When mapping $(x_1 : \ldots : x_{n+1}) \in \Projn$ to $(x_1 - x_{n+1}, \ldots, x_n - x_{n+1}) \in \Rn$ as above then the tropical distance in $\Rn$ between $\varx=(x_1,\ldots,x_n)$ and $\vary=(y_1,\ldots,y_n)$ is
\begin{eqnarray}
	d_{\mathrm{tr}}(\varx, \vary) &:=& \max\{\max_{1 \leq i \leq n} \{\abs{x_i - y_i}\}, \max_{1 \leq i,j \leq n} \{x_i - y_i -x_j + y_j\}\} \\
	&=& \max\{\max_{1 \leq i \leq n} \{x_i - y_i\},0\}
	- \min\{ \min_{1 \leq i \leq n} \{ x_i - y_i\}, 0\}. \nonumber
\end{eqnarray}
The {\it tropical norm} of $\varx = (x_1 : \ldots : x_{n+1}) \in \Projn$ is
\begin{equation}
		\tnorm{\varx} := \max_{1 \leq i, j \leq n+1} \{x_i - x_j\}.
\end{equation}
In $\Rn$ the tropical norm of $\varx=(x_1,\ldots,x_n)$ is
\begin{equation}
		\tnorm{\varx} := \max\{\max_{1 \leq i \leq n} \{x_i\}, 0\} - 
		\min \{\min_{1 \leq i \leq n} \{x_i\}, 0\}.
\end{equation}
The {\it tropical line segment} (see \cite{MS15}, p. 230) between points $\varx$ and $\vary$ in $\Rn$ is a piecewise linear curve. 
Each line segment is in direction of a vector with zeros and ones and
the tropical distance between $\varx$ and $\vary$ is the sum of the tropical lengths of these line segments.
In \cite{Ros26} we extended the definition of the tropical length of such piecewise linear curves  to more general curves as follows.
\begin{definition}
	\label{def:length}
	Let $\gamma : [a,b] \to \Rn$ be a finite parametrically-defined rectifiable curve (that is, the mapping $\gamma$ is continuous, Lipschitz and almost everywhere differentiable).
	Let
	\begin{equation}
		l_{\mathrm{tr}, \varepsilon}(\gamma) := \inf_{\mathcal{T}_{\alpha}=(t_{\alpha,i})_{i=0}^{n_\alpha}} \left\{ \sum_{i=1}^{n_{\alpha}} d_{\mathrm{tr}}(\gamma(t_{\alpha,i-1}), \gamma(t_{\alpha,i})) \right\},
	\end{equation}
	where the infimum is over all partitions $\mathcal{T}_{\alpha}=(t_{\alpha,i})_{i=0}^{n_\alpha}$ of $[a,b]$ satisfying $a=t_{\alpha,0} < t_{\alpha,1} < \cdots < t_{\alpha,n_{\alpha}}=b$, with $d_{\mathrm{tr}}(\gamma(t_{\alpha,i-1}), \gamma(t_{\alpha,i})) \leq \varepsilon$, for all $i$. Then the {\it tropical length} of $\gamma$ is
	\begin{equation}
		l_{\mathrm{tr}}(\gamma) := \lim_{\varepsilon \to 0^+} l_{\mathrm{tr},\varepsilon}(\gamma).
	\end{equation}
	Equivalently, the tropical length of $\gamma$ is the total variation:
	\begin{equation}
		\label{eq:total_variation}
		l_{\mathrm{tr}}(\gamma) := \sup_{\mathcal{T}_{\alpha}=(t_{\alpha,i})_{i=0}^{n_\alpha}} \left\{ \sum_{i=1}^{n_{\alpha}} d_{\mathrm{tr}}(\gamma(t_{\alpha,i-1}), \gamma(t_{\alpha,i})) \right\},
	\end{equation}
	where the supremum is over all partitions $\mathcal{T}_{\alpha}=(t_{\alpha,i})_{i=0}^{n_\alpha}$ of $[a,b]$ satisfying $a=t_{\alpha,0} < t_{\alpha,1} < \cdots < t_{\alpha,n_{\alpha}}=b$.
\end{definition}
Then, the standard definition of a tropical geodesic is the following (see \cite{Ros26}).
\begin{definition}
	A curve $\gamma$ between the points $\varx$ and $\vary$ is a {\it tropical geodesic} if $l_{\mathrm{tr}}(\gamma) = d_{\mathrm{tr}}(\varx, \vary)$.
	A set $S \subset \Rn$ is {\it tropically geodesic} if it contains all tropical geodesics between every two points $\varx, \vary \in S$.
\end{definition}

\section{The tropical unit ball}
\label{sec:ball}
The $n$-dimensional tropical (closed) ball of radius $R$ with center at $\varp \in \Rn$ is
	\begin{equation}
		\BRn(\varp) := \{\varx \in {\Rn} : d_{\mathrm{tr}}(\varp, \varx) \leq R \}.
	\end{equation}
We denote by $\BRn$ the tropical ball of radius $R$ that is centered at the origin and when, in addition, the radius is 1 then it is denoted by $\Bn$.
The tropical ball is a convex polytope. When centered at the origin then it is centrally symmetric but not a regular polytope: it is stretched along the directions $\bf 1$ and $-\bf 1$ and shrunk in the orthogonal directions of the hyperplane $x_1 + \cdots + x_n = 0$ (see Figure~\ref{fig:tr_balls}).
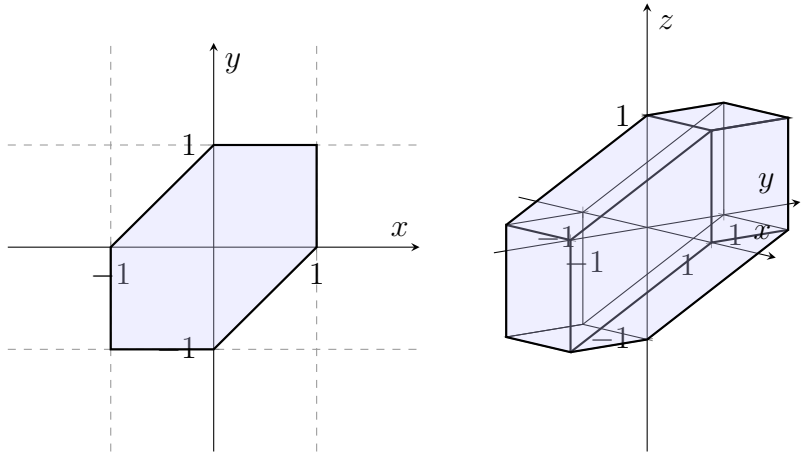
\begin{figure}[h]
	\centering
	\begin{tikzpicture}
		\begin{axis}[
			axis x line=middle,
			axis y line=middle,
			grid = major,
			width=7cm,
			height=7 cm,
			grid style={dashed, gray!80},
			xmin=-2.0,     % start the diagram at this x-coordinate
			xmax= 2.0,    % end   the diagram at this x-coordinate
			ymin= -2.0,     % start the diagram at this y-coordinate
			ymax= 2.0,   % end the diagram at this y-coordinate
			xlabel=$x$,
			ylabel=$y$,
			/pgfplots/xtick={-1.0, 0.0, 1.0}, % make steps of length 0.1
			/pgfplots/ytick={-1.0, 0.0, 1.0}, % make steps of length 0.1
			]
			\draw[thick,black] (1.0,0.0) -- (1.0,1.0) -- (0.0,1.0) -- (-1.0,0.0) -- (-1.0,-1.0) -- (0.0,-1.0) -- cycle [fill=blue!20, opacity=0.3];
			\draw[thick,black] (1.0,0.0) -- (1.0,1.0) -- (0.0,1.0) -- (-1.0,0.0) -- (-1.0,-1.0) -- (0.0,-1.0) -- cycle;
		\end{axis}
	\end{tikzpicture}
	\qquad
	\begin{tikzpicture}
		\begin{axis}[
			view={50}{10},
			axis x line=middle,
			axis y line=middle,
			axis z line=middle,
			grid = major,
			width=9cm,
			height=9cm,
			grid style={dashed, gray !80},
			xmin=-2,     % start the diagram at this x-coordinate
			xmax= 2,    % end the diagram at this x-coordinate
			ymin= -2,     % start the diagram at this y-coordinate
			ymax= 2,   % end the diagram at this y-coordinate
			zmin= -2,     % start the diagram at this z-coordinate
			zmax= 2,   % end the diagram at this z-coordinate
			xlabel=$x$,
			ylabel=$y$,
			zlabel=$z$,
			/pgfplots/xtick={-1.0, 0.0, 1.0}, % make steps of length 0.1
			/pgfplots/ytick={-1.0, 0.0, 1.0}, % make steps of length 0.1
			/pgfplots/ztick={-1.0, 0.0, 1.0}, % make steps of length 0.1
			]
			\coordinate (A1) at (1,0,0);
			\coordinate (A2) at (1,1,0);
			\coordinate (A3) at (0,1,0);
			\coordinate (A4) at (0,0,1);
			\coordinate (A5) at (1,0,1);
			\coordinate (A6) at (1,1,1);
			\coordinate (A7) at (0,1,1);
			
			\coordinate (B1) at (-1,0,0);
			\coordinate (B2) at (-1,-1,0);
			\coordinate (B3) at (0,-1,0);
			\coordinate (B4) at (0,0,-1);
			\coordinate (B5) at (-1,0,-1);
			\coordinate (B6) at (-1,-1,-1);
			\coordinate (B7) at (0,-1,-1);
			
			\draw [thick] (A1) -- (A2) -- (A6) -- (A5) -- cycle;
			\draw (A3) -- (A2) -- (A6) -- (A7) -- cycle;
			\draw [thick] (A4) -- (A5) -- (A6) -- (A7) -- cycle;
			\draw (B1) -- (B2) -- (B6) -- (B5) -- cycle;
			\draw [thick] (B3) -- (B2) -- (B6) -- (B7) -- cycle;
			\draw (B4) -- (B5) -- (B6) -- (B7) -- cycle;
			\draw (A7) -- (B1);
			\draw [thick] (A4) -- (B2);
			\draw (A3) -- (B5);
			\draw [thick] (A5) -- (B3);
			\draw (A2) -- (B4);
			\draw [thick] (A1) -- (B7);
			\draw [thick] (B7) -- (B4) -- (A2);
			\draw [thick] (A2) -- (B4) -- (B7) -- (B6) -- (B2) -- (A4) -- (A7) -- (A6) -- cycle [fill=blue!20, opacity=0.3];
		\end{axis}
	\end{tikzpicture}
	\caption{The tropical unit ball in the plane (a hexagon, left) and in space (a dodecahedron with four-sided faces, right)}
	\label{fig:tr_balls}
\end{figure}

The $n$-dimensional tropical unit ball $\Bn$ is the disjoint union (up to common facets) of the $n+1$ tropical unit hypercubes $\In{j}$, $j = 1, \ldots, n+1$.
The latter are defined as follows (see \cite{Ros26}).
The $n$-dimensional (min) tropical unit hypercube of type $j$, for $j=1,\ldots,n+1$, with base point at the origin, denoted $\In{j}$, is the zonotope defined by the $n$ tropically orthogonal tropical unit vectors
${\bf \tilde{e}_1},\ldots,{\bf \widehat{\tilde{e}}_j},\ldots,{\bf \tilde{e}_{n+1}}$.
When the base point is at the point $\varp$ (a translation of $\In{j}$ by $\varp$)
then it is denoted $\In{j}(\varp)$. 
In Figure~\ref{fig:decomposition} we see the decomposition into tropical hypercubes in the plane.

Another description of $\Bn$ is as the zonotope which is the Minkowski sum of the intervals $[0, {\bf \tilde{e}_i}]$, $i=1,\ldots,n+1$ (see \cite{Ros26}).
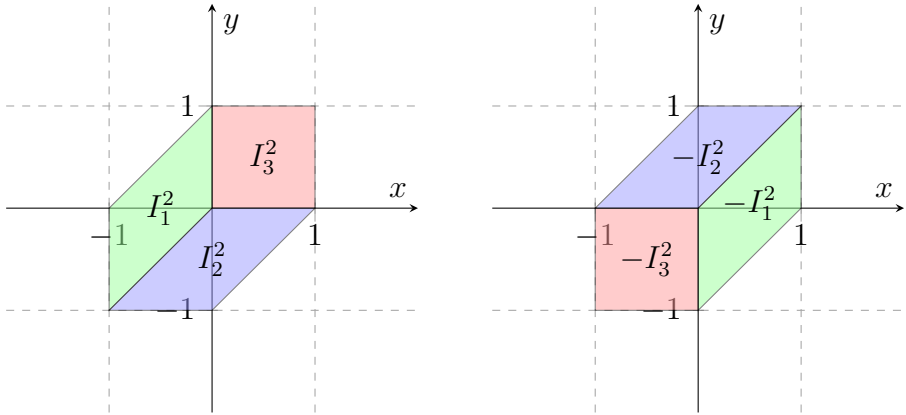
\begin{figure}[h]
	\centering
	\begin{tikzpicture}
		\begin{axis}[
			axis x line=middle,
			axis y line=middle,
			grid = major,
			width=7cm,
			height=7 cm,
			grid style={dashed, gray!70},
			xmin=-2.0,     % start the diagram at this x-coordinate
			xmax= 2.0,    % end   the diagram at this x-coordinate
			ymin= -2.0,     % start the diagram at this y-coordinate
			ymax= 2.0,   % end the diagram at this y-coordinate
			xlabel=$x$,
			ylabel=$y$,
			/pgfplots/xtick={-1.0, 0.0, 1.0}, % make steps of length 0.1
			/pgfplots/ytick={-1.0, 0.0, 1.0}, % make steps of length 0.1
			]
			\draw (0.0,0.0) -- (1.0,0.0) -- (1.0,1.0) -- (0.0,1.0) -- cycle [fill=red!40, 	opacity=0.5];
			\draw (0.0,0.0) -- (0.0,1.0) -- (-1.0,0.0) -- (-1.0,-1.0) -- cycle [fill=green!40, 	opacity=0.5];
			\draw (0.0,0.0) -- (-1.0,-1.0) -- (0.0,-1.0) -- (1.0,0.0) -- cycle [fill=blue!40, 	opacity=0.5];
			\node at (0.5,0.5) {$I^2_3$};
			\node at (-0.5,0.0) {$I^2_1$};
			\node at (0.0,-0.5) {$I^2_2$};
		\end{axis}
	\end{tikzpicture}
	\qquad
	\begin{tikzpicture}
		\begin{axis}[
			axis x line=middle,
			axis y line=middle,
			grid = major,
			width=7cm,
			height=7 cm,
			grid style={dashed, gray!70},
			xmin=-2.0,     % start the diagram at this x-coordinate
			xmax= 2.0,    % end   the diagram at this x-coordinate
			ymin= -2.0,     % start the diagram at this y-coordinate
			ymax= 2.0,   % end the diagram at this y-coordinate
			xlabel=$x$,
			ylabel=$y$,
			/pgfplots/xtick={-1.0, 0.0, 1.0}, % make steps of length 0.1
			/pgfplots/ytick={-1.0, 0.0, 1.0}, % make steps of length 0.1
			]
			\draw (0.0,0.0) -- (-1.0,0.0) -- (-1.0,-1.0) -- (0.0,-1.0) -- cycle [fill=red!40, opacity=0.5];
			%(-1.0,0.0) -- (-1.0,-1.0) -- (0.0,-1.0) -- (1.0,0.0);
			\draw (0.0,0.0) -- (0.0,-1.0) -- (1.0,0.0) -- (1.0,1.0) -- cycle [fill=green!40, opacity=0.5];
			\draw (0.0,0.0) -- (1.0,1.0) -- (0.0,1.0) -- (-1.0,0.0) -- cycle [fill=blue!40, opacity=0.5];
			\node at (-0.5,-0.5) {$-I^2_3$};
			\node at (0.5,0.05) {$-I^2_1$};
			\node at (0.0,0.5) {$-I^2_2$};
		\end{axis}
	\end{tikzpicture}
	\caption{Decomposition of the 2-dimensional tropical unit ball into tropical unit hypercubes: min-plus decomposition (left) and max-plus decomposition (right)}
	\label{fig:decomposition}
\end{figure}

A nice property of tropical balls, which is not shared by regular balls, is that they form a honeycomb tiling of $\Rn$, for every $n \geq 1$ (see Figure~\ref{fig:tessellation} for tiling of the plane). 
\begin{theorem}[see \cite{Ros26}]
	\label{thm:honeycomb}
	The collection of $n$-dimensional tropical unit balls $\Bn({\bf c})$ with centers at the sublattice of $\Zn$
	\begin{equation}
		\Lambda : \{ {\bf c} = (c_1,\ldots,c_n) \in\Zn : \sum_{i=1}^{n} c_i \equiv 0 \: (\mathrm{mod}~n+1)
		\label{eq:center} 
	\end{equation}
	forms a honeycomb of $\Rn$.
\end{theorem}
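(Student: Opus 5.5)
A natural route is to lift to $\Ran$ and use the hexagonal (permutohedral) structure there. Identify $\Rn$ with the hyperplane $H=\{\varx\in\Ran : x_1+\cdots+x_{n+1}=0\}$ via the linear map $\phi(\varx)=\varx-\frac{1}{n+1}(\textstyle\sum_i x_i)\One$ applied to the representative $(x_1,\ldots,x_n,0)$. This is a linear isomorphism $\Rn\to H$ carrying $\Zn$ onto the projection of $\ZZ^{n+1}$. In these coordinates the tropical norm becomes $\max_i y_i-\min_i y_i$, so $\phi(\Bn)$ is the zonotope $\sum_{i=1}^{n+1}[0,\phi({\bf \tilde e_i})]$. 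Here $\phi({\bf \tilde e_i})=e_i-\frac1{n+1}\One$ for $i\le n$ and $\phi({\bf \tilde e_{n+1}})=-(e_{n+1}-\frac1{n+1}\One)$; after the symmetric recentering, $\phi(\Bn)$ is the permutohedron-type zonotope generated by the $n+1$ vectors $\pm(e_i-\frac{1}{n+1}\One)$. Since linear bijections preserve tilings, it suffices to prove the statement in $H$.

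We first identify the lattice. The lattice $\Lambda$ of \eqref{eq:center} has index $n+1$ in $\Zn$, with $\Zn/\Lambda\cong \ZZ/(n+1)$ via ${\bf c}\mapsto\sum c_i$. Its image under $\phi$ should be the lattice generated by the vectors $(n+1)e_i-\One$ projected to $H$. Concretely, ${\bf c}\in\Lambda$ iff ${\bf c}$ is an integer combination of the vectors $v_i=e_i-e_{i+1}$-type differences together with $(n+1)e_1$-type vectors. A cleaner description is $\Lambda=\{\sum_{j=1}^{n+1} m_j\,{\bf w}_j : m_j\in\ZZ\}$, where ${\bf w}_j$ is the vector with $n$ ones and a single $-n$ appropriately placed. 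The center of a neighbouring ball is the image under $\varp\mapsto\varp+{\bf w}$ of $2\times$ a vertex direction. Equivalently, one can verify that the centers of neighbours sharing a facet are exactly ${\bf c}\pm(\One+{\bf \tilde e_i}+\cdots)$, i.e.\ the points $2\varv$ for $\varv$ a facet center of $\Bn$, and that these generate $\Lambda$. This is a finite computation using the explicit facets $\{x_i=1\},\{x_i=-1\},\{x_i-x_j=\pm1\}$ of $\Bn$ (the facets of the zonotope correspond to the nonempty proper subsets $S$ of $\{1,\ldots,n+1\}$, with facet center $\frac12\sum_{i\in S}{\bf \tilde e_i}$ suitably normalized).

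The tiling then follows by a volume plus packing argument. Packing: for ${\bf c}\in\Lambda\setminus\{\Zero\}$ I show the interiors of $\Bn$ and $\Bn({\bf c})$ are disjoint, i.e.\ $\tnorm{{\bf c}}\ge 2$. Indeed, if $\tnorm{{\bf c}}\le1$ with ${\bf c}\in\Zn$, then ${\bf c}\in\{0,1\}^n$ or ${\bf c}\in\{0,-1\}^n$. In that case $\sum c_i\in\{-n,\ldots,n\}$ is divisible by $n+1$ only when ${\bf c}=\Zero$. Since the norm takes integer values on $\Zn$, we get $\tnorm{{\bf c}}\ge2$. Covering: the Lebesgue volume of $\Bn$ is $(n+1)\cdot\mathrm{vol}(\In{j})=n+1$, since $\Bn$ is a union of $n+1$ unit hypercubes, each of determinant $\pm1$. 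This equals the covolume $[\Zn:\Lambda]=n+1$ of $\Lambda$. A lattice packing of a convex body whose volume equals the covolume is a tiling: the union of translates is closed, its complement is open, and it has density zero, hence is empty.

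The main obstacle I expect is making the density argument rigorous. The density of a lattice packing is $\mathrm{vol}(\Bn)/\det\Lambda=1$, so the complement of the union of translates has measure zero. Being open, the complement is empty, and by local finiteness of the family of translates the union of closed balls is closed. Facet-to-facet matching, if ``honeycomb'' requires it, follows because the tiling is by translates of a centrally symmetric zonotope (a parallelohedron), for which tilings by lattice translates are automatically face-to-face.
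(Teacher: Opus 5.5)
The paper does not prove this statement; it quotes it from \cite{Ros26}. So there is nothing here to compare yours against step by step, and I judge it on its own merits.

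\textbf{What is correct.} The core of your proposal is right and gives a short, self-contained proof that the translates $\Bn({\bf c})$, ${\bf c}\in\Lambda$, cover $\Rn$ with pairwise disjoint interiors.
\begin{enumerate}
\item Packing: $\tnorm{\cdot}$ is integer-valued on $\Zn$. A nonzero integer vector of tropical norm $1$ lies in $\{0,1\}^n\cup\{0,-1\}^n$ and has coordinate sum in $\{\pm1,\ldots,\pm n\}$. Hence $\tnorm{{\bf c}}\geq 2$ on $\Lambda\setminus\{\Zero\}$, which for a norm ball is exactly disjointness of interiors.
\item Counting: $\mathcal{L}^n(\Bn)=n+1=[\Zn:\Lambda]$.
\item Density: this is cleanest on the torus. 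The map $\Rn\to\Rn/\Lambda$ is injective on the interior of $\Bn$, so the image of $\Bn$ is a compact set of full measure, hence the whole torus.
\end{enumerate}
If ``honeycomb'' only means a tiling with disjoint interiors, your proof is complete.

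\textbf{The gap: facet-to-facet matching.} The paper does rely on this later: ``each facet belongs to two tropical balls'' in Section~\ref{sec:honecomb_Rn}, and the covering in the proof of Theorem~\ref{thm:surface_sphere}. The principle you invoke is false. The unit square is a centrally symmetric zonotope, and its translates by $\ZZ(1,0)+\ZZ(\tfrac12,1)$ form the brick tiling, which is not edge-to-edge. For $\Bn$ a direct check is easy:
\begin{enumerate}
\item The facets are $\Bn\cap\{x_i=\pm1\}$ and $\Bn\cap\{x_i-x_j=1\}$, with centers ${\bf f}=\pm\tfrac12(\One+{\bf e_i})$ and ${\bf f}=\tfrac12({\bf e_i}-{\bf e_j})$.
\item A facet $F$ of a zonotope is centrally symmetric about its center. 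So the point reflection $\varx\mapsto 2{\bf f}-\varx$ maps $\Bn$ onto $\Bn(2{\bf f})$ and $F$ onto itself, making $F$ a whole facet of $\Bn(2{\bf f})$.
\item $2{\bf f}$ has coordinate sum $\pm(n+1)$ or $0$, so it lies in $\Lambda$.
\item By disjointness of interiors, no third tile meets the relative interior of $F$.
\end{enumerate}

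\textbf{Errors in the lattice-identification paragraph.} Your argument does not use this paragraph, but most of it is wrong and should be removed or corrected.
\begin{enumerate}
\item $\phi({\bf \tilde e_{n+1}})=+({\bf e}_{n+1}-\tfrac{1}{n+1}\One)$. The $n+1$ tropical unit vectors lift to the symmetric family ${\bf e}_i-\tfrac{1}{n+1}\One$.
\item $\phi(\Lambda)=\ZZ^{n+1}\cap H$ is the root lattice $A_n$; for example, $(1,-1,0,\ldots,0)\in\Lambda$ maps to ${\bf e}_1-{\bf e}_2$. The vectors $(n+1){\bf e}_i-\One$, and your ${\bf w}_j$, span only the sublattice of $A_n$ whose coordinates are all congruent mod $n+1$. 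This sublattice is proper for $n\geq 2$.
\item The nonempty proper subsets $S\subset\{1,\ldots,n+1\}$ index the $2^{n+1}-2$ vertices $\sum_{i\in S}{\bf \tilde e_i}$ of $\Bn$, not its facets. There are $n(n+1)$ facets, indexed by ordered pairs $i\neq j$.
\end{enumerate}

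\textbf{A shorter route via your lift.} With the correct identification, the lift gives the whole theorem at once. The body $\phi(\Bn)=\{{\bf y}\in H: y_i-y_j\leq 1\ \forall i,j\}$ is exactly the Voronoi cell of $A_n$. Its Voronoi-relevant vectors are the roots ${\bf e}_i-{\bf e}_j$, and $\langle{\bf y},{\bf e}_i-{\bf e}_j\rangle\leq\tfrac12\norm{{\bf e}_i-{\bf e}_j}^2$ reads $y_i-y_j\leq 1$. For $n=3$ this cell is the rhombic dodecahedron, not a permutohedron. Voronoi cells of a lattice tile face-to-face, which settles both the covering and the facet matching.
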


For more details on the characteristics and structure of tropical balls see \cite{Ros26}.
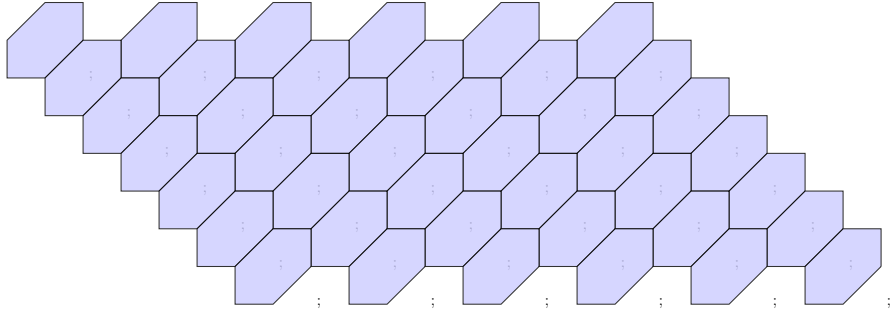
\begin{figure}[h]
	\centering
	\begin{tikzpicture}[scale=0.5, transform shape]
		\foreach \x in {0,...,6} {
			\foreach \y in {0,...,5} {
				\node at (\x+3*\y,-\x)
				{ \tikz\draw [thick,fill=blue!20, opacity=0.8]
					(1.0,0.0) -- (1.0,1.0) -- (0.0,1.0) -- (-1.0,0.0) -- (-1.0,-1.0) -- (0.0,-1.0) -- (1.0,0.0); ;} ;} ;}
	\end{tikzpicture}
	\caption{Tessellation of the plane by tropical disks}
	\label{fig:tessellation}
\end{figure}

\section{Tropical spherical measure}
\label{sec:tropical_measure}
We define a tropical spherical measure (tropical measure, in what follows) $\sigma_{\mathrm{tr}}$ on $\Rn$. It is a translation-invariant metric outer measure that is compatible with the tropical metric.
The tropical measure coincides with Lebesgue measure on $n$-dimensional Lebesgue-measurable subsets of $\Rn$ but they disagree, in general, on subsets of dimension $k < n$ in $\Rn$.
Like the definition of Hausdorff measure (\cite{Fed69}, \cite{Rog70}, \cite{Fal86},\cite{Mor16}), the definition of this tropical measure is by a Carath\'{e}odory's construction.
\begin{definition}
	\label{def:tr_measure}
	Given a set $S$ in $\Rn$, let
\begin{equation}
	\label{eq:tr_epsilon_measure}
	\sigma^k_{\mathrm{tr},\varepsilon}(S) := (k+1) \inf_{\mathcal{C}_{\alpha}} \left\{ \sum_{i=1}^{n_{\alpha}} \delta_{\alpha,i}^k : \delta_{\alpha,i} \leq \varepsilon, \, \bigcup_{i=1}^{n_{\alpha}} B^n_{\mathrm{tr}, \delta_{\alpha,i}}({\bf p_{\alpha,i}}) \supseteq S \right\},
\end{equation}
where the infimum is taken over all countable covers of $S$ by collections 
$$\mathcal{C}_{\alpha} = (B^n_{\mathrm{tr},\delta_{\alpha,i}}({\bf p_{\alpha,i}}))_{i=1}^{n_\alpha}, \quad n_{\alpha} \leq \infty,$$
of $n$-dimensional tropical balls $B^n_{\mathrm{tr}, \delta_{\alpha,i}}({\bf p_{\alpha,i}})$ with centers at ${\bf p_{\alpha,i}} \in S$ (Minkowski cover) and radii $\delta_{\alpha,i} \leq \varepsilon$.
Then we define the {\it $k$-dimensional tropical measure} of $S$ to be
\begin{equation}
	\label{eq:tr_measure}
	\sigma^k_{\mathrm{tr}}(S) := \lim_{\varepsilon \to 0^+} \sigma^k_{\mathrm{tr}, \varepsilon}(S).
\end{equation}
\end{definition}
We also define
\begin{equation}
	\label{eq:tr_=epsilon_measure}
	\sigma^k_{\mathrm{tr},=\varepsilon}(S) := (k+1) \varepsilon^k \inf_{\mathcal{C}_{\alpha}} \{\mathrm{card}(\mathcal{B}_{\alpha})\},
\end{equation}
where the infimum is taken over all (Minkowski) covers $\mathcal{C}_{\alpha}$ of $S$ by countably many $n$-dimensional tropical balls with centers on $S$ and of radius exactly $\varepsilon$.

\begin{remarks}
	\begin{enumerate}
		\item The term $k+1$ in \eqref{eq:tr_epsilon_measure} and in \eqref{eq:tr_=epsilon_measure} comes from our choice to set the measure of the $k$-dimensional tropical unit hypercube to be 1, thus the measure of the $k$-dimensional tropical unit ball is $k+1$.
		\item The definition of the tropical measure resembles the definition of spherical Hausdorff measure.
		Note, however, that we restrict ourselves to coverings with tropical closed balls
		of dimension $n$ when the ambient space is $\Rn$. This follows from the fact that 
		tropical balls are ``twisted'' in the Euclidean metric and
		in general we cannot use tropical balls of dimension $k<n$ to measure a $k$-dimensional set $S \subset \Rn$ even when $S$ is contained in a $k$-dimensional affine subspace of $\Rn$.
		\item Another point is that the centers of the covering tropical balls are on the covered set $S$, which means that the cover is done by a union of Minkowski sums. We can restrict ourselves to such covers because of the convexity and symmetry properties of the tropical balls.
		\item Suppose that $S$ is a $k$-dimensional set that lies in $\Rn$, $n \geq k$. Then $\sigma^k_{\mathrm{tr}}(S)$ is invariant to the natural embedding of $S$ in $\RR^m$, for $m > n$, since the intersection with $\Rn$ of an $m$-dimensional tropical ball of radius $\delta$ is an $n$-dimensional tropical ball of the same radius $\delta$. 
	\end{enumerate}
\end{remarks}
\begin{example}
	When $S$ is an $n$-dimensional box in $\Rn$ with edges parallel to the tropical coordinate axes and of tropical lengths $a_1,\ldots,a_n$ then the tropical measure of $S$ equals its Euclidean volume, that is, $\sigma^n_{\mathrm{tr}}(S) = a_1 \cdots a_n$.
\end{example}
Next, we show that this equality with Lebesgue measure holds in general.
\begin{theorem}
	\label{thm:n-measure}
	Let $S$ be a Lebesgue-measurable $n$-dimensional subset of $\Rn$. Then
	\begin{equation}
		\sigma^n_{\mathrm{tr}}(S) = \mathcal{L}^n(S),
	\end{equation}
	where $\sigma^n_{\mathrm{tr}}(S), \mathcal{L}^n(S)$ are the $n$-dimensional tropical, respectively, Lebesgue measure of $S$. In particular, the (tropical) volume of a tropical ball of radius $R$ is
	\begin{equation}
	\label{eq:n-measure}
	\sigma^n_{\mathrm{tr}}(\BRn) =(n+1)R^n.
\end{equation}	
\end{theorem}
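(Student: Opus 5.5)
The plan is to compare $\sigma^n_{\mathrm{tr},\varepsilon}$ directly with Lebesgue measure. The first step is to compute the Lebesgue volume of a tropical ball. By Section~\ref{sec:ball}, $\Bn$ is the union, up to common facets, of the $n+1$ tropical unit hypercubes $\In{j}$. Each $\In{j}$ is the parallelepiped spanned by $n$ of the vectors ${\bf \tilde{e}_1},\ldots,{\bf \tilde{e}_{n+1}}$. Replacing one standard basis vector by $-\One$ in the identity matrix gives a determinant of $\pm 1$, so $\mathcal{L}^n(\In{j})=1$. Hence $\mathcal{L}^n(B^n_{\mathrm{tr},\delta}({\bf p}))=(n+1)\delta^n$ by scaling and translation invariance. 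With this, the normalizing factor in \eqref{eq:tr_epsilon_measure} shows that
\[
\sigma^n_{\mathrm{tr},\varepsilon}(S)=\inf_{\mathcal{C}_\alpha}\sum_i \mathcal{L}^n\bigl(B^n_{\mathrm{tr},\delta_{\alpha,i}}({\bf p_{\alpha,i}})\bigr),
\]
the infimum being over admissible Minkowski covers. Once the theorem is proved, the formula \eqref{eq:n-measure} follows from this computation.

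The lower bound is immediate from countable subadditivity of $\mathcal{L}^n$. For any admissible cover we have $\sum_i \mathcal{L}^n(B_i)\ge \mathcal{L}^n(\bigcup_i B_i)\ge \mathcal{L}^n(S)$. Therefore $\sigma^n_{\mathrm{tr},\varepsilon}(S)\ge\mathcal{L}^n(S)$ for every $\varepsilon$, and letting $\varepsilon\to 0^+$ gives $\sigma^n_{\mathrm{tr}}(S)\ge\mathcal{L}^n(S)$.

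For the upper bound, first assume $S$ is bounded. Fix $\varepsilon,\eta>0$ and choose an open $U\supseteq S$ with $\mathcal{L}^n(U)\le\mathcal{L}^n(S)+\eta$. The family of tropical balls $B^n_{\mathrm{tr},\delta}({\bf p})\subseteq U$ with ${\bf p}\in S$ and $\delta\le\varepsilon$ is a fine cover of $S$. The tropical balls are the balls of a norm on $\Rn$, so they are convex, centrally symmetric and uniformly comparable to Euclidean balls. Hence the Vitali covering theorem for Lebesgue measure (or Besicovitch's theorem for norm balls) applies. It yields countably many pairwise disjoint such balls $B_i$ with $\mathcal{L}^n(S\setminus\bigcup_i B_i)=0$, and disjointness inside $U$ gives $\sum_i\mathcal{L}^n(B_i)\le\mathcal{L}^n(U)$.

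It remains to cover the null set $N=S\setminus\bigcup_i B_i$ by balls centered on $N\subseteq S$ with small total volume. I would cover $N$ by Euclidean cubes $Q_k$ of side at most $\varepsilon/(2n)$ with $\sum_k\mathcal{L}^n(Q_k)<\eta$, and for each $Q_k$ meeting $N$ pick ${\bf p}_k\in Q_k\cap N$. The tropical norm satisfies $\tnorm{\varx}\le 2\max_i|x_i|$, so the tropical ball about ${\bf p}_k$ of radius twice the side of $Q_k$ contains $Q_k$, and its volume is at most $C_n\mathcal{L}^n(Q_k)$. Together these give $\sigma^n_{\mathrm{tr},\varepsilon}(S)\le\mathcal{L}^n(S)+\eta+C_n\eta$. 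Letting $\eta\to0$ and then $\varepsilon\to0$ gives the bound.

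For unbounded $S$, write $S=\bigcup_m S_m$ with $S_m$ bounded and use countable subadditivity of $\sigma^n_{\mathrm{tr},\varepsilon}$, which holds because covers can be concatenated. The main obstacle is the Vitali step with the constraint that the centers lie on $S$; I expect the standard Vitali/Besicovitch argument for balls of a norm centered at the covered set to suffice.
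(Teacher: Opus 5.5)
Your proof is correct. The lower bound is the same as the paper's: each admissible ball contributes exactly its Lebesgue volume $(n+1)\delta^n$, and subadditivity does the rest. Your upper bound, however, takes a different route.

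The paper gets near-optimal covers directly from the honeycomb tiling of Theorem~\ref{thm:honeycomb}. At a small scale $\delta$, the balls $B^n_{\mathrm{tr},\delta}$ centred on $\delta\Lambda$ overlap only in facets, so almost no volume is wasted. This is elementary and gives explicit, essentially non-overlapping covers. As written, though, it skips two points that your proof handles. First, the tiles are centred at lattice points rather than on $S$, so they are not Minkowski covers as required in Definition~\ref{def:tr_measure}. Second, a single-scale tiling only approximates $\mathcal{L}^n(S)$ for reasonably regular $S$. For an open set of small measure that is dense in a cube, the tiles meeting it have total volume close to that of the cube.

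Your Vitali selection draws only from balls centred on $S$ and contained in an open $U$ with $\mathcal{L}^n(U)\le\mathcal{L}^n(S)+\eta$. The leftover null set is absorbed by balls centred on it, of total volume at most $(n+1)2^n\eta$. So both the centre constraint and general measurability are dealt with. The price is the Vitali covering theorem for balls of a norm, and that is legitimate here. The $5r$-covering lemma uses only the triangle inequality, and $\mathcal{L}^n(B^n_{\mathrm{tr},5r}(\varp))=5^n\mathcal{L}^n(B^n_{\mathrm{tr},r}(\varp))$.

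Since you never use the tiling, your argument works for any norm whose unit ball has volume equal to the normalising constant. One small detail: in the unbounded case, choose the pieces $S_m$ disjoint (e.g.\ intersections of $S$ with shells), so that $\sum_m\mathcal{L}^n(S_m)=\mathcal{L}^n(S)$.
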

\begin{proof}
	For ease of notation, in what follows we write $\Bdn$ instead of $\Bdn(\varp)$ for a tropical ball of radius $\delta$ and center $\varp$. By \eqref{eq:tr_measure}, the tropical measure $\sigma^n_{\mathrm{tr}}(S)$ of $S$ is defined through the covering of $S$ by tropical balls $\Bdn$. 
	Each such tropical ball $\Bdn$ of radius $\delta$ is composed of $n+1$ tropical hypercubes $\Idn{j}$ of tropical side length $\delta$. Each $\Idn{j}$ is defined by $n$ tropically orthogonal vectors $\delta{\bf \tilde{e}_j}$ and is of standard volume $\delta^n$ (either $\Idn{j}$ is a standard cube or a parallelotope defined by $n-1$ vectors in directions of the Euclidean coordinate axes and one vector in direction $-\One$, whose orthogonal projection on the remaining Euclidean coordinate axis is of size $\delta$), so that $\mathcal{L}^n(\Bdn) = (n+1)\delta^n$.
	This is also the contribution of $\Bdn$ to the tropical measure of $S$ in \eqref{eq:tr_epsilon_measure},
	and since the tropical balls cover $S$
	we have $\mathcal{L}^n(S) \leq \sigma^n_{\mathrm{tr}}(S)$.
	But by Theorem~\ref{thm:honeycomb}, for every $\varepsilon >0$, there exists $\delta>0$, such that $S$ can be covered by tropical balls of radius at most $\delta$ and such that $\sigma^n_{\mathrm{tr},\delta}(S) < \mathcal{L}^n(S) + \varepsilon$. 
	It follows that $\sigma^n_{\mathrm{tr}}(S) \leq \mathcal{L}^n(S)$ and by the two inequalities we have $\sigma^n_{\mathrm{tr}}(S) = \mathcal{L}^n(S)$.
\end{proof}

\subsection{Tropical length of a curve}
As in the Euclidean setting, where the Hausdorff measure of a curve equals its length, this is also the case in the tropical setting.
\begin{proposition}
	Let $\gamma : [a,b] \to \Rn$ be a rectifiable curve. Then $\sigma^1_{\mathrm{tr}}(\gamma) =  l_{\mathrm{tr}}(\gamma)$.
\end{proposition}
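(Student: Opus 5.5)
The plan is to prove the two inequalities $\sigma^1_{\mathrm{tr}}(\gamma) \leq l_{\mathrm{tr}}(\gamma)$ and $\sigma^1_{\mathrm{tr}}(\gamma) \geq l_{\mathrm{tr}}(\gamma)$ separately. Throughout we use that $d_{\mathrm{tr}}$ is induced by the norm $\tnorm{\cdot}$ and that $B^n_{\mathrm{tr},\delta}(\varp)$ is exactly the $d_{\mathrm{tr}}$-ball. By Definition~\ref{def:tr_measure} with $k=1$, a ball of radius $\delta$ contributes $2\delta$, which is its tropical diameter. So $\sigma^1_{\mathrm{tr}}$ is precisely the spherical Hausdorff $1$-measure for the metric space $(\Rn,d_{\mathrm{tr}})$, with centers restricted to $\gamma$. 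The statement is therefore the classical fact that the one-dimensional (spherical) Hausdorff measure of a rectifiable curve equals its length. We adapt the standard argument, taking care to check that nothing depends on the Euclidean structure.

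\textbf{Upper bound.} Fix $\varepsilon>0$. By Definition~\ref{def:length}, for every $\eta>0$ there is a partition $a=t_0<\dots<t_N=b$ satisfying two conditions:
\begin{equation*}
l_{\mathrm{tr}}(\gamma|_{[t_{i-1},t_i]})\le \varepsilon \quad \text{for all } i, \qquad \sum_{i=1}^{N} l_{\mathrm{tr}}(\gamma|_{[t_{i-1},t_i]}) = l_{\mathrm{tr}}(\gamma).
\end{equation*}
The second holds because tropical length is additive, being a total variation. Split each arc further at its length-midpoint $s_i$, so that $\gamma(s_i)$ has arclength at most $\lambda_i/2$ to every point of the arc, where $\lambda_i:=l_{\mathrm{tr}}(\gamma|_{[t_{i-1},t_i]})$. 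Since $d_{\mathrm{tr}}$ is bounded by the length of any connecting arc, the whole arc lies in $B^n_{\mathrm{tr},\lambda_i/2}(\gamma(s_i))$, and this ball is centered on $\gamma$. Summing gives
\begin{equation*}
\sigma^1_{\mathrm{tr},\varepsilon}(\gamma)\le 2\sum_i \lambda_i/2 = l_{\mathrm{tr}}(\gamma).
\end{equation*}
Letting $\varepsilon\to 0$ yields the upper bound. No injectivity of $\gamma$ is needed here.

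\textbf{Lower bound.} This is the main obstacle, because the measure sees only the image set. We therefore first reduce to an injective (arc) parametrization; if $\gamma$ retraces itself, the statement should be read with multiplicity or for simple curves, and I would assume $\gamma$ is simple. For a simple arc the key lemma is
\begin{equation*}
\sigma^1_{\mathrm{tr}}(\gamma)\ge d_{\mathrm{tr}}(\gamma(a),\gamma(b)).
\end{equation*}
To prove it, take any countable cover of $\gamma$ by balls $B_i$ of radii $\delta_i$. Since $\gamma$ is compact and the balls can be slightly enlarged to open ones, a finite subcover suffices. Because the image is connected, we can chain balls from $\gamma(a)$ to $\gamma(b)$ so that consecutive balls intersect. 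The triangle inequality for $d_{\mathrm{tr}}$ then bounds the endpoint distance by the sum of the diameters $2\delta_i$.

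Next apply the lemma to each subarc $\gamma([t_{i-1},t_i])$ of a partition. By injectivity these subarcs meet only at endpoints, which are single points of $\sigma^1_{\mathrm{tr}}$-measure zero. The remaining ingredient is additivity: $\sigma^1_{\mathrm{tr}}$ must be additive over the subarcs. I would obtain this by checking that $\sigma^1_{\mathrm{tr}}$ is a metric outer measure, so that Borel sets, in particular compact arcs, are measurable; the check is the usual Carathéodory argument for sets at positive distance, once $\varepsilon$ is smaller than that distance. The one subtle point is that the centers are required to lie on $S$. For this I would use the standard comparison that the centered measure is at least the uncentered spherical measure, which itself dominates Hausdorff measure, and for the lower bound I would run the chain argument with the uncentered version, which is the smaller quantity. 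Summing over the subarcs gives $\sigma^1_{\mathrm{tr}}(\gamma)\ge\sum_i d_{\mathrm{tr}}(\gamma(t_{i-1}),\gamma(t_i))$. Taking the supremum over partitions and using \eqref{eq:total_variation} completes the proof.
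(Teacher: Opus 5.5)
Your proof is correct, and it takes a genuinely different route from the paper's. The injectivity you add is a necessary correction, not a weakening. If a curve runs along a segment and back, $l_{\mathrm{tr}}(\gamma)$ is twice $\sigma^1_{\mathrm{tr}}$ of its image, and the paper assumes a simple curve without saying so.

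\textbf{The paper's route.} It argues both inequalities through a direct correspondence between covers and partitions.
\begin{itemize}
\item For $l_{\mathrm{tr}}(\gamma)\le\sigma^1_{\mathrm{tr}}(\gamma)$, it asserts that for small $\varepsilon$ the curve meets the boundary of each covering ball $B^n_{\mathrm{tr},\delta}(\varp)$ in exactly two points $\mathbf{q}_1,\mathbf{q}_2$. It takes $\varp,\mathbf{q}_1,\mathbf{q}_2$ as partition points, giving two steps of tropical length $\delta$ per ball, and compares with the infimum form $l_{\mathrm{tr},\varepsilon}$ of the length.
\item For the reverse inequality, it uses the intersection points of the balls with $\gamma$ as partition points, argues that overlaps become negligible, and appeals to \eqref{eq:total_variation}.
\end{itemize}

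\textbf{Your route.} You build the cover from a partition instead: arcs of length $\lambda_i\le\varepsilon$, covered by balls of radius $\lambda_i/2$ centered at their length-midpoints. This gives $\sigma^1_{\mathrm{tr},\varepsilon}(\gamma)\le l_{\mathrm{tr}}(\gamma)$ exactly, with no overlap estimate and no injectivity. For the lower bound you use the standard Hausdorff-measure argument: a chain lemma from connectedness and the triangle inequality, additivity over subarcs, and then the supremum over partitions.

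\textbf{What each buys.} The paper's version is shorter and closer to the picture of rectification by tropical segments. However, its key geometric claim fails in general. On the boundary of the square $[0,1]^2$ (a tropical hypercube), the ball $B^2_{\mathrm{tr},\delta}((1-\delta,0))$ is centered on the bottom side, yet it meets the right side along the whole edge $\{1\}\times[0,\delta]$, for every $\delta$. Your argument has three advantages:
\begin{itemize}
\item it uses only that $d_{\mathrm{tr}}$ is a metric;
\item it uses only the total-variation form of the length (the infimum form $l_{\mathrm{tr},\varepsilon}$ degenerates on closed curves);
\item it handles the Minkowski-cover restriction by sandwiching, with centered covers for the upper bound and uncentered covers for the lower bound.
\end{itemize}

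\textbf{One wording point.} Centered covering constructions are not monotone in general, so do not claim that $\sigma^1_{\mathrm{tr}}$ itself is a metric outer measure. Establish the metric-outer-measure property, which is routine, for the uncentered spherical measure $\tilde\sigma^1$ (covers by arbitrary tropical balls). Then write the lower bound as
\[
\sigma^1_{\mathrm{tr}}(\gamma)\ \ge\ \tilde\sigma^1(\gamma)\ =\ \sum_i\tilde\sigma^1\bigl(\gamma([t_{i-1},t_i])\bigr)\ \ge\ \sum_i d_{\mathrm{tr}}\bigl(\gamma(t_{i-1}),\gamma(t_i)\bigr).
\]
The middle equality uses that the subarcs meet only at endpoints, which are $\tilde\sigma^1$-null.
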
	
\begin{proof}
	The tropical length $l_{\mathrm{tr}}(\gamma)$ of $\gamma$ is given in Definition~\ref{def:length}. It uses a series of piecewise linear approximations through the method of rectification, reaching a limit from below.
	The tropical measure $\sigma^1_{\mathrm{tr}}(\gamma)$ of $\gamma$ is given in Definition~\ref{def:tr_measure}.
	When applying the tropical measure, we are covering $\gamma$ with balls $\Bdn(\varp)$, $\delta < \varepsilon$, where $\varp \in \gamma$. Since $\gamma$ is rectifiable, we may choose $\varepsilon$ to be small enough, so that the intersection of $\gamma$ with $\partial \Bdn$ (except maybe at the endpoints of $\gamma$) is exactly at two points $\bf q_1$ and $\bf q_2$. Then the two line segments from $\varp$ to $\bf q_i$, $i=1,2$, are of tropical length $\delta$ each. Setting the points $\varp, \bf q_1, p_2$ to belong to a partition of $\gamma$, we form in this way a piecewise linear approximation of $\gamma$ of total length at most that of the one we get by covering with tropical balls, since the covering balls may have overlaps. It follows that when $\varepsilon \to 0$ we get that $l_{\mathrm{tr}}(\gamma) \leq \sigma^1_{\mathrm{tr}}(\gamma)$.
	
	On the other hand, for any cover of $\gamma$ by tropical balls with a bound $\varepsilon$ on the radius of the balls, we can take the points of intersections of the balls with $\gamma$ as partition points.
	When $\varepsilon$ is small enough the total length $\varepsilon'$ of the overlap of the balls is also small when seeking the infimum in \eqref{eq:tr_epsilon_measure} (this is possible since $\gamma$ is rectifiable). So, when $\varepsilon \to 0$ then $\varepsilon' \to 0$ and by \eqref{eq:total_variation} we have $\sigma^1_{\mathrm{tr}}(\gamma) \leq  l_{\mathrm{tr}}(\gamma)$.
	
	By both inequalities we have $\sigma^1_{\mathrm{tr}}(\gamma) =  l_{\mathrm{tr}}(\gamma)$.
\end{proof}

\subsection{Alternative definitions of a tropical measure}
\label{subsec:alternative_measures}
We present here three alternative definitions of measures in the tropical setting: tropical
Hausdorff measure $\mathcal{H}_{\mathrm{tr}}$, tropical cubical measure $\kappa_{\mathrm{tr}}$ and tropical triangular measure $\tau_{\mathrm{tr}}$.

\subsubsection{Tropical Hausdorff measure}
The tropical Hausdorff measure is the analogue of the classical Hausdorff measure.
\begin{definition}
	Given a non-empty set $S \in \Rn$, its {\it tropical diameter}, $\mathrm{diam}_{\mathrm{tr}}(S)$, is
	\begin{equation}
		\mathrm{diam}_{\mathrm{tr}}(S) := \sup_{\varx,\vary \in S} d_{\mathrm{tr}}(\varx,\vary).
	\end{equation}
\end{definition}
\begin{definition}
	Given a set $S$ in $\Rn$, let
	\begin{equation}
		\mathcal{H}^k_{\mathrm{tr},\varepsilon}(S) := (k+1) \inf_{\mathcal{C}_{\alpha}} \left\{ \sum_{i=1}^{n_{\alpha}} \left( \frac{\delta_{\alpha,i}}{2} \right)^k : \delta_{\alpha,i} \leq \varepsilon, \, \bigcup_{i=1}^{n_{\alpha}} S_{\delta_{\alpha,i}} \supseteq S \right\},
	\end{equation}
	where the infimum taken is over all covers of $S$ by collections $\mathcal{C}_{\alpha} = (S_{\delta_{\alpha,i}})_{i=1}^{n_\alpha}$, $n_{\alpha} \leq \infty$, of $k$-dimensional subsets $S_{\delta_{\alpha,i}}$ with $\mathrm{diam}_{\mathrm{tr}}(S_{\delta_{\alpha,i}}) = \delta_{\alpha,i} \leq \varepsilon$.
	Then we define the {\it $k$-dimensional tropical Hausdorff measure} of $S$ to be
	\begin{equation}
		\label{eq:tr_diam_measure}
		\mathcal{H}^k_{\mathrm{tr}}(S) := \lim_{\varepsilon \to 0^+} \mathcal{H}^k_{\mathrm{tr}, \varepsilon}(S).
	\end{equation}
\end{definition}

\subsubsection{Tropical cubical measure}
\begin{definition}
	Given a set $S$ 
	in $\Rn$, let
	\begin{equation}
	\label{eq:tr_cubic_epsilon_measure}
	\kappa^k_{\mathrm{tr},\varepsilon}(S) := \inf_{\mathcal{C}_{\alpha}} \left\{ \sum_{i=1}^{n_{\alpha}} \delta_{\alpha,i}^k : \delta_{\alpha,i} \leq \varepsilon, \, \bigcup_{i=1}^{n_{\alpha}} I^n_{\alpha,i}(\bf p_{\alpha,i}) \supseteq S \right\},
\end{equation}	
	where the infimum is taken over all countable (of finite or infinite cardinality $n_{\alpha}$) covers $\mathcal{C}_{\alpha}$ of $S$ by $n$-dimensional tropical hypercubes $I^n_{\alpha,i}(\bf p_{\alpha,i})$ of tropical side length $\delta_{\alpha,i} \leq \varepsilon$ and base point ${\bf p_{\alpha,i}} \in S$.
	Then we define the {\it $k$-dimensional tropical cubical measure} of $S$ to be
	\begin{equation}
		\label{eq:tr_cubic_measure}
		\kappa^k_{\mathrm{tr}}(S) := \lim_{\varepsilon \to 0^+} \kappa^k_{\mathrm{tr}, \varepsilon}(S).
	\end{equation}
	\begin{remark}
		In the case of a cover by tropical hypercubes it is essential that the base points $\bf p_{\alpha,i}$ of the cubes are on $S$ (Minkowski cover) because the tropical metric is direction-dependent.
	\end{remark}
\end{definition}

\subsubsection{Tropical triangular measure}
The positive hypercube of side length $R$ in $\Rn$, whose set of $2^n$ vertices is $\{(a_1,\ldots,a_n) \,:\, a_i \in \{0,R\}\}$, is decomposed into $n!$ simplices $\Delta^n_{\mathrm{tr},\sigma,R}$, $\sigma \in S_n$, where
\begin{equation}
	\Delta^n_{\mathrm{tr},\sigma,R} = \{(x_1,\ldots,x_n) \,:\, 0 \leq x_{\sigma(1)} \leq x_{\sigma(2)} \leq \cdots \leq x_{\sigma(n)} \leq R\},
\end{equation}
is a regular tropical simplex of dimension $n$ and of tropical side length $R$. We look at these simplices as having a base point at the origin, which means that they can be translated but not rotated.
\begin{definition}
	Given a set $S$ in $\Rn$, let
	\begin{equation}
		\label{eq:tr_triang_epsilon_measure}
		\tau^k_{\mathrm{tr},\varepsilon}(S) := \frac{1}{k!} \inf_{\mathcal{C}_{\alpha}} \left\{ \sum_{i=1}^{n_{\alpha}} \delta_{\alpha,i}^k : \delta_{\alpha,i} \leq \varepsilon, \, \bigcup_{i=1}^{n_{\alpha}} \Delta^n_{\mathrm{tr}, \delta_{\alpha,i}}({\bf p_{\alpha,i}}) \supseteq S \right\},
	\end{equation}
	where the infimum is taken over all (Minkowski) covers of $S$ by collections
	$$
	\mathcal{C}_{\alpha} = (\Delta^n_{\mathrm{tr}, \delta_{\alpha,i}}({\bf p_{\alpha,i}}))_{i=1}^{n_\alpha}, \quad n_{\alpha} \leq \infty,
	$$
	of $n$-dimensional tropical simplices $\Delta^n_{\mathrm{tr}, \delta_{\alpha,i}}$ of side length $\delta_{\alpha,i} \leq \varepsilon$ and base point ${\bf p_{\alpha,i}} \in S$.
	Then we define the {\it $k$-dimensional tropical triangular measure} of $S$ to be
	\begin{equation}
		\label{eq:triang_measure}
		\tau^k_{\mathrm{tr}}(S) := \lim_{\varepsilon \to 0^+} \tau^k_{\mathrm{tr}, \varepsilon}(S).
	\end{equation}
\end{definition}
\section{Surface area to volume ratio of tropical balls}
\label{sec:surface}
After computing the tropical volume of a tropical ball (Theorem~\ref{thm:n-measure}), we compute here its tropical surface area.
\begin{theorem}
	\label{thm:surface_sphere}
	The tropical surface area of the $(n-1)$-dimensional tropical sphere of radius $R$ is
	\begin{equation}
		\label{eq:surface_sphere}
		\tarea{\SRan} = \sigma^{n-1}_{\mathrm{tr}}(\SRan) = n(n+1)R^{n-1}.
	\end{equation}
\end{theorem}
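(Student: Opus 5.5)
Since $\sigma^{n-1}_{\mathrm{tr}}$ is translation invariant and scales like $R^{n-1}$ under the dilation $\varx \mapsto R\varx$ (tropical balls of radius $\delta$ go to tropical balls of radius $R\delta$, and the centre condition is preserved), it suffices to treat $R=1$ and show $\sigma^{n-1}_{\mathrm{tr}}(\partial \Bn) = n(n+1)$. The plan is to decompose the sphere into its facets and compute the measure of each facet. By the zonotope description of $\Bn$ as the Minkowski sum of $[0,{\bf \tilde e_i}]$, $i=1,\ldots,n+1$, each facet is a translate of the zonotope generated by $n-1$ of the $n+1$ vectors ${\bf \tilde e_i}$ (a tropical $(n-1)$-dimensional unit hypercube), and it appears twice (with its opposite). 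Hence there are $2\binom{n+1}{2}=n(n+1)$ facets, e.g.\ $6$ edges of the hexagon and $12$ rhombi of the dodecahedron in Figure~\ref{fig:tr_balls}. It therefore suffices to prove that each facet $F$ has $\sigma^{n-1}_{\mathrm{tr}}(F)=1$. Facets meet only along $(n-2)$-dimensional faces, which are $\sigma^{n-1}_{\mathrm{tr}}$-null, because they can be covered by $O(\delta^{-(n-2)})$ balls of radius $\delta$. Since $\sigma^{n-1}_{\mathrm{tr}}$ is a metric outer measure (Carath\'eodory construction), the values then add.

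To compute $\sigma^{n-1}_{\mathrm{tr}}(F)=1$ for $F$ spanned by ${\bf \tilde e_i}$, $i\notin\{j,k\}$, I would follow the two-sided strategy of Theorem~\ref{thm:n-measure}. For the upper bound, tile $F$ by $m^{n-1}$ small copies $\frac1m F$. The goal is to cover $F$ efficiently by balls of radius $\delta=1/m$ centred on $F$. The intersection of $B^n_{\mathrm{tr},\delta}(\varp)$, $\varp\in F$, with the affine hull of $F$ is the $(n-1)$-dimensional zonotope generated by $\pm\delta{\bf \tilde e_i}$, $i\ne j,k$, restricted to the plane. That is, it is the tropical $(n-1)$-ball of radius $\delta$ in the tropical coordinates of that plane, of $F$-normalised measure $n\delta^{n-1}$. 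By the honeycomb Theorem~\ref{thm:honeycomb} applied inside the $(n-1)$-dimensional lattice spanned by the generators of $F$, these slices tile $F$ up to a boundary layer of negligible size, giving $\sigma^{n-1}_{\mathrm{tr},\delta}(F)\le n\cdot\delta^{n-1}\cdot\#\text{balls}\to 1$. For the lower bound, any cover of $F$ by balls centred on $F$ of radii $\delta_i$ induces a cover of $F$ by their slices. Each slice, for a centre in $F$, is at most the $(n-1)$-tropical ball of radius $\delta_i$, of normalised area $n\delta_i^{n-1}$. So $1=\mathrm{area}(F)\le\sum n\delta_i^{n-1}$, which matches the factor $(k+1)=n$ in \eqref{eq:tr_epsilon_measure}.

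The main obstacle is the slice computation: verifying that $B^n_{\mathrm{tr},\delta}\cap \mathrm{aff}(F)$ is exactly the tropical $(n-1)$-ball in the intrinsic tropical coordinates of $F$, for every pair $\{j,k\}$. The remark on embeddings ($\RR^{n}\subset\RR^{m}$) handles only the coordinate case, e.g.\ $F\subset\{x_n=1\}$ with $j=n$, $k=n+1$. For the other facet types I would first apply a lattice-preserving linear symmetry of the tropical torus, namely a permutation of the homogeneous coordinates $x_1,\ldots,x_{n+1}$. Such a permutation preserves $d_{\mathrm{tr}}$, Lebesgue measure and the set of tropical balls, and it maps any facet pair $\{j,k\}$ to the coordinate case. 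This reduces the general facet to the case covered by the remark, and the count $n(n+1)$ follows.
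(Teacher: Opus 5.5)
Your proof is correct, and its skeleton is the paper's: split $\SRan$ into its $n(n+1)$ facets, and show each has measure $R^{n-1}$ by comparing it with slices of small tropical balls.

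The genuine difference is in the facets on the hyperplanes $x_j=x_i+R$.
\begin{itemize}
\item The paper handles them by a separate Euclidean computation. The slice of $B^n_{\mathrm{tr},\varepsilon}$ by $x_i=x_j$ has Euclidean area $\sqrt{2}\,n\varepsilon^{n-1}$, the facet has Euclidean area $\sqrt{2}R^{n-1}$, and this gives the ratio $1:\sqrt{2}$.
\item You instead observe that permutations of the homogeneous coordinates are linear tropical isometries, so they preserve $\sigma^{n-1}_{\mathrm{tr}}$ outright. They also act transitively on the facets: they permute the ${\bf \tilde{e}_i}$, and facets correspond to ordered pairs $(j,k)$. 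So only the cube $[0,1]^{n-1}\times\{1\}$ needs handling, and that is exactly the paper's first case (its slice is an $(n-1)$-dimensional tropical ball, then Theorem~\ref{thm:n-measure} applies).
\end{itemize}
Your route is shorter, needs no Euclidean normalisations, and shows that all facets are tropically congruent. The paper's computation, in exchange, yields the explicit Euclidean-to-tropical ratio on the diagonal hyperplanes. That ratio resurfaces later when $\sigma^{n-1}_{\mathrm{tr}}$ is compared with the tropical Minkowski surface area.

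One slip in your slice description: taking generators $\pm\delta{\bf \tilde{e}_i}$, $i\neq j,k$, would give a parallelotope. The correct generators are $\delta{\bf \tilde{e}_i}$, $i\neq j,k$, together with $\delta({\bf \tilde{e}_j}+{\bf \tilde{e}_k})$. Your symmetry reduction makes this moot.

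You also make explicit two steps the paper leaves implicit: the per-facet lower bound and additivity over facets. For additivity you cite the paper's claim that $\sigma_{\mathrm{tr}}$ is a metric outer measure. Because covers must be centred on the set, monotonicity is not automatic for this construction. It is safer to verify superadditivity directly:
\begin{itemize}
\item assign each ball to one facet containing its centre;
\item observe that the balls assigned to $F$ cover $F$ outside an $O(\varepsilon)$-neighbourhood of its relative boundary;
\item apply your slice bound.
\end{itemize}
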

\begin{proof}
	The $(n-1)$-dimensional tropical sphere of radius $R$ is the boundary of the $n$-dimensional tropical ball $\BRn$, where the latter has $2n$ facets on the hyperplanes $x_j=\pm R$ and $n(n-1)$ facets on the hyperplanes $x_j=x_i + R$ (see \cite{Ros26}).
	For convenience, we may assume these facets pass through the origin.
	We start with the facets $F$ on $x_j=0$ (after translation). 
	Each such facet is a standard $(n-1)$-dimensional hypercube of side $R$, so its Euclidean measure is $R^{n-1}$.
	For its tropical measure, we cover it with tropical balls with centers on $F$.
	The cross-section of the tropical ball $\Ben$ of radius $\varepsilon$ by the hyperplane $x_j=0$ is an $(n-1)$-dimensional tropical ball of radius $\varepsilon$, whose Euclidean as well as tropical measure is $n\varepsilon^{n-1}$. It follows that the tropical measure of $F$ is the same as its Euclidean measure, that is $R^{n-1}$.
	
	After translation, a facet $F$ of the second type
	is of the form $-R \leq x_i \leq 0$, $x_j=x_i$ and $x_i \leq x_k \leq x_i + R$, for $k \neq i,j$. The Euclidean measure of $F$ is $\sqrt{2}R^{n-1}$.
	We can cover $F$ with tropical balls of radius $\varepsilon$ with centers at ${\bf c} = (c_1,\ldots,c_n) \in (\varepsilon \ZZ)^n$, $\sum_{k \neq j} c_k \equiv 0 \: (\mathrm{mod}~n\varepsilon)$, $c_j=c_i$, so that the intersection of two adjacent balls is only at their facets.
	This is possible by Theorem~\ref{thm:honeycomb}, which provides a covering of the orthogonal projection of $F$ on the ($n-1$)-dimensional hyperplane $X_1 \cdots \widehat{X_j} \cdots X_n$, and the projection of the covering in direction $\pm {\bf e_j}$ to the hyperplane $x_i=x_j$, with the centers of the $n$-dimensional tropical balls $\Ben$ on $x_i=x_j$.
	Let $S$ be the cross-section of $\Ben$ by the hyperplane $x_i=x_j$. We have $\sigma^{n-1}_{\mathrm{tr},\varepsilon}(S)=n\varepsilon^{n-1}$, and we want to compute the Euclidean measure of $S$. The measure of the intersection of $S$ with each of the $n+1$ tropical hypercubes that $\Ben$ is composed of is $\mathcal{L}^{n-1}(S \cap I^n_{k,\varepsilon}) = \sqrt{2}\varepsilon^{n-1}$.
	However, $S \cap I^n_{i,\varepsilon} = S \cap I^n_{j,\varepsilon}$ is the common facet of $I^n_{i,\varepsilon}$ and $I^n_{j,\varepsilon}$. So, overall we have $\mathcal{L}^{n-1}(S) = \sqrt{2}n\varepsilon^{n-1}$. We get that the ratio $\sigma^{n-1}_{\mathrm{tr},\varepsilon}(S) : \mathcal{L}^{n-1}(S) = 1 :\sqrt{2}$. Since, by letting $\varepsilon \to 0^+$, $F$ can almost be covered by tropical balls that only intersect in their facets, it follows that $\sigma^{n-1}_{\mathrm{tr}}(F) = \frac{\sqrt{2}R^{n-1}}{\sqrt{2}} = R^{n-1}$.
	
	Since $\SRan$ has $n(n+1)$ facets, each one of area $R^{n-1}$, we have $\tarea{\SRan} = \sigma^{n-1}_{\mathrm{tr}}(\SRan) = n(n+1)R^{n-1}$.
\end{proof}
\begin{corollary}
	\label{cor:ratio}
	The ratio of the tropical surface area to the tropical volume of an $n$-dimensional tropical ball of radius $R$ is
	\begin{equation}
		\tarea{\SRan} : \tvol{\BRn} = n(n+1)R^{n-1} : (n+1)R^n = n : R.
	\end{equation}
\end{corollary}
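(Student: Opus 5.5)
The corollary follows by dividing the two quantities already computed, so the plan is simply to combine Theorem~\ref{thm:n-measure} with Theorem~\ref{thm:surface_sphere}. First, the tropical ball $\BRn$ is a convex polytope, hence a Lebesgue-measurable $n$-dimensional subset of $\Rn$. Theorem~\ref{thm:n-measure} therefore applies: the tropical volume equals the Lebesgue volume, and by \eqref{eq:n-measure} we have $\tvol{\BRn} = \sigma^n_{\mathrm{tr}}(\BRn) = (n+1)R^n$. Second, $\SRan = \partial \BRn$, and Theorem~\ref{thm:surface_sphere} gives $\tarea{\SRan} = \sigma^{n-1}_{\mathrm{tr}}(\SRan) = n(n+1)R^{n-1}$.

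Dividing, the common factor $(n+1)R^{n-1}$ cancels and leaves $n : R$. This is the same ratio as for a Euclidean ball of radius $R$, since $\frac{d}{dR}\bigl((n+1)R^n\bigr) = n(n+1)R^{n-1}$, which is the Minkowski-type relation between surface area and volume. That observation could be noted as a consistency check, but it is not needed for the proof.

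There is no real obstacle here. The only point needing care is that both quantities are taken with respect to the same family of tropical spherical measures $\sigma^k_{\mathrm{tr}}$, with the normalisation factor $k+1$ used consistently for $k=n$ and $k=n-1$. That consistency is exactly how the two cited results were stated, so the corollary follows at once.
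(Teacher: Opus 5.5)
Your proposal is correct and is exactly the paper's argument: the corollary is just the quotient of $\sigma^{n-1}_{\mathrm{tr}}(\SRan)=n(n+1)R^{n-1}$ from Theorem~\ref{thm:surface_sphere} and $\sigma^{n}_{\mathrm{tr}}(\BRn)=(n+1)R^n$ from Theorem~\ref{thm:n-measure}. Your derivative consistency check corresponds to the paper's follow-up remark that the same ratio also comes from the tropical ball being a Wulff shape; as you say, it is not needed for the proof.
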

This is the same ratio that is obtained for balls in Euclidean space:
\begin{equation}
	\area{S^{n-1}_R} : \vol{B^n_R} = 
	\frac{n \pi^{n/2}R^{n-1}}{\Gamma(1+n/2)} :
	\frac{\pi^{n/2}R^n}{\Gamma(1+n/2)} = 
	n : R.
\end{equation}
\begin{remark}
	The above ratio of the surface area to volume with respect to the tropical measure follows also from the fact that the tropical ball is a Wulff shape (see Subsection~\ref{subsec:Wulff_shape}) and Equation \eqref{eq:energy_volume_ratio}, as well as Proposition~\ref{thm:paralle l_polytope}.
\end{remark}
\section{Tropical isoperimetric inequality in the plane}
\label{sec:isoperim_R2}
The famous isoperimetric inequality in the plane (see, e.g., \cite{Oss78}) is the following.
\begin{theorem}[Isoperimetric inequality in $\RR^2$]
	Let $\gamma$ be a simple closed curve in the plane, that is, a continuous injective mapping $\gamma : S^1 \to \RR^2$. Let $l$ be the length of $\gamma$ and $\mathcal{A}$ the area of the enclosed region. Then
	\begin{equation}
		\label{eq:isoperR2}
		l^2 \geq 4\pi \mathcal{A},
	\end{equation}
	with equality holding if and only if $\gamma$ is a circle.
\end{theorem}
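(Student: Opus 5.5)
The plan is to prove the classical planar isoperimetric inequality by Hurwitz's Fourier-series argument, since it gives both the inequality and the equality case in one step.

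First, we reduce to rectifiable curves. If $l=\infty$ there is nothing to prove, so assume $\gamma$ is rectifiable with length $l$. We reparametrize by arc length, rescaled to the interval $[0,2\pi]$. This gives a $2\pi$-periodic Lipschitz map $(x(s),y(s))$ with $x'^2+y'^2 = (l/2\pi)^2$ almost everywhere. In particular, $x$ and $y$ lie in $H^1$ of the circle, so their Fourier series converge in $L^2$ and their derivatives are given by termwise differentiation.

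Next, we express both quantities through Fourier coefficients.
\begin{itemize}
\item For the length, integrating gives
\[
\frac{l^2}{2\pi}=\int_0^{2\pi}(x'^2+y'^2)\,ds = \pi\sum_{k\ge1}k^2(a_k^2+b_k^2+c_k^2+d_k^2),
\]
where $x=\frac{a_0}{2}+\sum_k(a_k\cos ks+b_k\sin ks)$ and similarly $y$ has coefficients $c_k,d_k$.
\item For the area, we use Green's formula in the form $\mathcal{A}=\left|\int_0^{2\pi}x\,y'\,ds\right|$. For a simple closed rectifiable curve this follows from the Jordan curve theorem together with approximation by inscribed polygons.
\item Parseval's identity then gives $\mathcal{A}=\pi\left|\sum_k k(a_kd_k-b_kc_k)\right|$.
\end{itemize}

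Subtracting, and using $2|k(a_kd_k-b_kc_k)|\le k^2(a_k^2+b_k^2+c_k^2+d_k^2)$, we obtain
\[
l^2-4\pi\mathcal{A} \ge 2\pi^2\sum_k\Big[(ka_k-d_k)^2+(kb_k+c_k)^2+(k^2-1)(c_k^2+d_k^2)\Big]\ge0.
\]
Here we assume, after a possible reflection, that the orientation makes $\sum_k k(a_kd_k-b_kc_k)\ge0$. Verifying this identity is a direct expansion.

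For the equality case, $l^2=4\pi\mathcal{A}$ forces every bracketed term to vanish.
\begin{itemize}
\item The factor $(k^2-1)(c_k^2+d_k^2)=0$ gives $c_k=d_k=0$ for $k\ge2$.
\item The first two squares then give $a_k=b_k=0$ for $k\ge2$, together with $d_1=a_1$ and $c_1=-b_1$.
\end{itemize}
So $x=\frac{a_0}2+a_1\cos s+b_1\sin s$ and $y=\frac{c_0}2-b_1\cos s+a_1\sin s$, which is a circle of radius $\sqrt{a_1^2+b_1^2}$. Conversely, a circle clearly attains equality.

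The main obstacle is the regularity bookkeeping rather than the algebra. We must justify Green's area formula for a merely rectifiable Jordan curve, and the termwise Parseval identities for Lipschitz functions. For the area formula I would approximate $\gamma$ by inscribed polygons $P_m$. Their enclosed areas converge to $\mathcal{A}$, their lengths converge to $l$, and Green's formula holds exactly for polygons. One can either pass to the limit in the Fourier expressions, or apply the inequality to each $P_m$ and take limits. The latter route gives only the inequality, so the equality case should be handled directly with the arc-length parametrization above, where $H^1$ regularity makes all the identities rigorous.
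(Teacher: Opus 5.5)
The paper does not prove this statement. It quotes it as classical background, with a pointer to Osserman's survey \cite{Oss78}, only to motivate the tropical constant $12$ in place of $4\pi$. So there is no argument of the paper to compare against, and your Hurwitz Fourier-series proof is a legitimate, standard route.

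Your computations check out. Once the orientation is normalized, $l^2-4\pi\mathcal{A}$ actually \emph{equals} $2\pi^2\sum_k\bigl[(ka_k-d_k)^2+(kb_k+c_k)^2+(k^2-1)(c_k^2+d_k^2)\bigr]$, so the termwise bound $2|k(a_kd_k-b_kc_k)|\le k^2(\cdots)$ you mention is not needed. The equality analysis correctly yields a circle of radius $\sqrt{a_1^2+b_1^2}$.

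The one loose spot is the area formula. Inscribed polygons of a Jordan curve need not be simple, so ``their enclosed areas'' must be read as signed shoelace areas $\oint_{P_m}x\,dy$. These do converge to $\oint_\gamma x\,dy$ as Riemann--Stieltjes sums. But identifying that limit with the Lebesgue area of the Jordan interior is exactly Green's theorem for rectifiable Jordan curves. One way to see it is via $\oint_\gamma x\,dy=\int_{\RR^2}w(\gamma,p)\,dp$, with winding number $\pm1$ inside, $0$ outside, and $\mathcal{L}^2(\gamma)=0$. This is classical but not a routine approximation, so cite it rather than present it as a one-line consequence of polygon approximation. With that cited, the arc-length/$H^1$ framework makes both the inequality and the equality case rigorous, as you say.

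On what each route buys: Hurwitz's method relies on the fact that, at constant speed, Euclidean length squared is $2\pi\int|\gamma'|^2$, an $L^2$ quantity. That is why both the inequality and its rigidity drop out of Parseval. This depends on the Euclidean norm coming from an inner product and has no analogue for the crystalline tropical norm.

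Conversely, the paper's own planar tropical argument circumscribes $D$ by a hexagon whose sides are supporting lines in the three tropical directions. It then reduces to an explicit quadratic form in the side lengths. This works only because, between consecutive contact points, the two-edge piece of the hexagon is a tropical geodesic, so circumscribing does not increase tropical perimeter. In the Euclidean plane circumscribing strictly increases perimeter, so that trick does not transfer to the statement you proved, just as your Fourier argument does not transfer to the tropical one.
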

Here we show that a similar inequality holds in the tropical setting, where the constant $4\pi$ of \eqref{eq:isoperR2} is replaced by the constant $4 \cdot 3 = 12$.
\begin{lemma}
	\label{lem:polygon iso ineq}
	A Euclidean convex tropical polygon (a two dimensional tropically geodesic compact set) $D  \subset \RR^2$ with tropical perimeter $\lent$ and area $\art$ satisfies:
	\begin{equation}
		\label{eq:tisoper}
		\lent^2 \geq 12 \art,
	\end{equation}
	with equality holding if and only if $D$ is a tropical ball (tropical disk).
\end{lemma}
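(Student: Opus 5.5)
The plan is to reduce to an explicit finite-parameter computation. First I will establish the shape of $D$. Since $D$ is compact, Euclidean convex and tropically geodesic, it contains every tropical line segment between its points. Tropical segments in the plane run only in the directions $(1,0)$, $(0,1)$ and $(1,1)$. So every boundary edge of $D$ must be parallel to one of these three directions: along an edge in any other direction, two nearby boundary points would be joined by a tropical segment leaving $D$. Hence
$$D=\{(x,y): a_1\le x\le b_1,\ a_2\le y\le b_2,\ a_3\le y-x\le b_3\},$$
which is a (possibly degenerate) hexagon with edges in the directions $\pm(1,0)$, $\pm(0,1)$, $\pm(1,1)$. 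This is exactly the family containing the tropical disks of Figure~\ref{fig:tr_balls}.

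Next I will parametrize $D$ by cutting corners from a triangle. Take the tropical triangle $\{x\le b_1,\ y\ge a_2,\ y-x\le b_3\}$ after extending the three supporting lines; it is a right isosceles triangle with horizontal, vertical and diagonal sides, each of tropical length $T$. The hexagon $D$ is obtained by cutting off three corner triangles, similar to it, of tropical side lengths $c_1,c_2,c_3\ge 0$, subject to $c_i+c_j\le T$. In these coordinates a triangle of side $T$ has area $T^2/2$. Cutting a corner of size $c$ removes two sides of tropical length $c$ and adds one new side of tropical length $c$; a diagonal side $t(1,1)$ has tropical length $t$, consistent with the tropical metric. Writing $s=c_1+c_2+c_3$, this gives
$$\lent = 3T - s,\qquad \art = \tfrac12\bigl(T^2-c_1^2-c_2^2-c_3^2\bigr),$$
where $\art$ agrees with the tropical measure by Theorem~\ref{thm:n-measure}, and $\lent$ is the tropical length of the boundary curve.

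Then the inequality follows by elementary algebra. By Cauchy--Schwarz, $c_1^2+c_2^2+c_3^2\ge s^2/3$, so
$$12\art \le 6T^2-2s^2 .$$
Also $\lent^2=9T^2-6Ts+s^2$, hence
$$\lent^2-12\art\ \ge\ 3T^2-6Ts+3s^2=3(T-s)^2\ge 0 .$$
Equality forces $c_1=c_2=c_3$ and $s=T$, i.e.\ $c_i=T/3$. This is precisely the centrally symmetric hexagon with all six sides of tropical length $T/3$, the tropical disk of radius $T/3$. As a check, the unit disk has $T=3$, $\lent=6$, $\art=3$, and indeed $36=12\cdot 3$. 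Conversely, every tropical disk is of this form, so equality holds for it.

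The main obstacle I expect is the first step: justifying rigorously that tropical geodesicity together with Euclidean convexity forces the boundary to use only the three tropical directions, including degenerate cases where some $c_i$ vanish or $c_i+c_j=T$. I would try to show that any supporting line of $D$ in a non-tropical direction touches $D$ along an edge or at a vertex where a tropical segment between nearby boundary points exits $D$. I also need that the tropical perimeter is additive over the edges, which follows from Definition~\ref{def:length} because each edge is a straight segment in a single tropical direction.
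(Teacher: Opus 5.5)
Your proof is correct and follows essentially the same route as the paper. Both reduce to a possibly degenerate hexagon with edges in the directions $(1,0)$, $(0,1)$, $(1,1)$; the paper takes this shape from the cited classification of the 18 combinatorial types rather than deriving it. Both then parametrize the hexagon by four lengths and check $\lent^2-12\art\ge 0$ algebraically. In fact your triangle-minus-three-corners coordinates give the exact identity $\lent^2-12\art=3(T-s)^2+2\sum_{i<j}(c_i-c_j)^2$. Under the substitution $T-s=d-b$ and $\{c_i-c_j\}=\{d-a,\,a-c,\,c-d\}$, this is precisely the paper's sum of squares $3(b-d)^2+2(a-c)^2+2(c-d)^2+2(d-a)^2$.
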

\begin{proof}
	In \cite{Ros26} it is shown that there are 18 different combinatorial types of compact tropically geodesic two-dimensional sets in the plane. They are of the form of polygons with 3-6 edges. A general set of this form with $6$ edges is seen in Figure~\ref{fig:convex_polygon}, where the lengths $a,b,c$ and $d$ are with respect to the tropical metric. We treat here all 18 types as having $6$ edges by noting that some of the edges of the polygon in Figure~\ref{fig:convex_polygon} are of length $0$.
	As can be seen, the tropical perimeter of each of the 18 types of polygons is
	\begin{eqnarray}
		\lent &=& (a+b)+c+(a+b-d)+(b+c) \nonumber \\
		&=& 2a+3b+2c-d,
	\end{eqnarray}
	and its (tropical) area is
	\begin{eqnarray}
		\art &=& (a+b)(b+c)-\frac{b^2}{2}-\frac{d^2}{2} \nonumber \\
		&=& \frac{b^2}{2}-\frac{d^2}{2}+ab+ac+bc.
	\end{eqnarray}
	Then
	\begin{eqnarray}
		\lent^2&=&(2a+3b+2c-d)^2  \nonumber\\
		&=&4a^2+9b^2+4c^2+d^2+12ab+8ac-4ad+12bc-6bd-4cd
	\end{eqnarray}
	and 
	\begin{eqnarray}
		\lent^2´-12\art&=& 4a^2+3b^2+4c^2+7d^2-4ac-4ad+12bc-6bd-4cd \nonumber \\
		&=& 3(b-d)^2+2(a-c)^2+2(c-d)^2+2(d-a)^2.
	\end{eqnarray}
	It follows that $\lent^2´-12\art \geq 0$, with equality if and only if $a=b=c=d$, which is only obtained when $D$ has $6$ edges of equal tropical length, that is, $D$ is a tropical ball.
\end{proof}

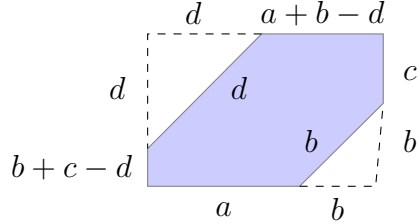
\begin{figure}[h]
	\centering
	\begin{tikzpicture}
		\draw (0,0);
		\begin{scope}
			\draw (0,0) -- (2,0) -- (3.1,1.1) -- (3.1,2.02) -- (1.5,2.02) -- (0,0.5) -- cycle [fill=blue!40, opacity=0.5];
			\draw [dashed] (2,0) -- (3,0) -- (3.1,1.1);
			\draw [dashed] (1.5,2.02) -- (0,2.02) -- (0,0.5);
			\node at (1.0,-0.3){$a$};
			\node at (2.5,-0.3){$b$};
			\node at (3.45,0.6){$b$};
			\node at (2.15,0.6){$b$};
			\node at (3.45,1.5){$c$};
			\node at (2.3,2.3){$a+b-d$};
			\node at (0.6,2.3){$d$};
			\node at (1.2,1.3){$d$};
			\node at (-0.4,1.3){$d$};		
			\node at (-1,0.25){$b+c-d$};
		\end{scope}
	\end{tikzpicture}
	\caption{Euclidean convex tropical polygon}
	\label{fig:convex_polygon}
\end{figure}

Now, we come to the general case. We restrict ourselves to regions $D$ in the plane with a boundary $\partial D$ that is a simple closed curve of finite length (if the length is infinite then the isoperimetric inequality trivially holds).
\begin{theorem}[{\it Tropical isoperimetric inequality in $\RR^2$}]
	\label{thm:R2 iso ineq}
	Let $D$ be a simply connected region in the plane enclosed by a simple closed curve.
	Let $\lent=\ltr{\partial D}$ be the tropical perimeter of $D$ and let $\art=\tarea{D}$ be the (tropical) area of $D$. Then the following tropical isoperimetric inequality holds:
	\begin{equation}
		%		\ltr{\partial D}^2 \geq 12 \tarea{D},
		\lent^2 \geq 12 \art,
	\end{equation}
	with equality holding if and only if $D$ is a tropical ball.
\end{theorem}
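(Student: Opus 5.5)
Our plan is to reduce the general case to Lemma~\ref{lem:polygon iso ineq} by replacing $D$ with its \emph{tropical geodesic hull}, the smallest tropically geodesic (and Euclidean convex) compact set containing $D$. Call it $\hat D$. In the plane, $\hat D$ is the intersection of half-planes bounded by lines in the three tropical directions, namely the horizontal, the vertical and the direction of $\One$. It is therefore a polygon of one of the 18 types in Figure~\ref{fig:convex_polygon}. Each of the six supporting lines is $x=\mathrm{const}$, $y=\mathrm{const}$ or $y-x=\mathrm{const}$, placed at the extremal value of $x$, $y$, or $y-x$ over $D$.

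The first step is to show $\art(D)\le \art(\hat D)$, which is immediate from $D\subseteq \hat D$ and Theorem~\ref{thm:n-measure}.

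The second step is to show $\lent(\partial D)\ge \lent(\partial\hat D)$. We compute the tropical length of a closed curve via the total variation \eqref{eq:total_variation}. For a segment in direction $(u,v)$, the tropical norm is $\max(u,v,0)-\min(u,v,0)$. This equals $\tfrac12\bigl(\abs{u}+\abs{v}+\abs{u-v}\bigr)$, and we verify the identity by checking the orderings of $u,v,0$. Summing over a partition and passing to the limit gives
\[
\lent(\gamma)=\tfrac12\bigl(V(x\circ\gamma)+V(y\circ\gamma)+V((y-x)\circ\gamma)\bigr),
\]
where $V$ denotes the total variation of a real function around the loop. For a closed loop, the variation of each coordinate function is at least twice its oscillation, $2(\max-\min)$. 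For the hexagon $\hat D$, each of the three coordinate functions is monotone between its extremal points, so equality holds there. The functions $x$, $y$ and $y-x$ have the same extrema over $\partial D$ as over $\partial\hat D$, since the supporting lines of $\hat D$ touch $D$. Hence $\lent(\partial D)\ge \lent(\partial \hat D)$. Combining the two steps with the lemma gives $\lent(\partial D)^2\ge\lent(\partial\hat D)^2\ge 12\art(\hat D)\ge 12\art(D)$.

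For the equality case, equality forces three things. First, $\hat D$ is a tropical disk by the lemma. Second, $\art(D)=\art(\hat D)$. Third, $\partial D$ has total variation in each of the three coordinates exactly twice the oscillation. The main obstacle is to show rigorously that these force $D=\hat D$, rather than $D$ differing from $\hat D$ by a null set or by a region whose boundary is monotone in all three coordinates. Since $D$ is enclosed by a simple closed curve, $D$ equals the closure of its interior. Equal areas with $D\subseteq\hat D$ closed then give $D=\hat D$, because a nonempty open subset of $\hat D\setminus D$ would have positive area. We would also double-check the identity for the tropical norm as a function of the direction $(u,v)$, and the claim that every tropically geodesic compact set is the intersection of such half-planes, citing \cite{Ros26} for the latter.
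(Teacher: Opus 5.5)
Your proof is correct. Its skeleton matches the paper's: replace $D$ by the hexagon cut out by the extremal lines $x,y,y-x=\mathrm{const}$, compare areas by inclusion, compare perimeters, then apply Lemma~\ref{lem:polygon iso ineq}. The genuine difference is in how you compare perimeters.

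\textbf{The paper's route.} It picks a contact point $\mathbf{q}_i$ of $\partial D$ on each edge of the enclosing hexagon $P$. The piece of $\partial P$ between consecutive contact points is a tropical geodesic, hence no longer than the corresponding arc of $\partial D$. This tacitly uses that the $\mathbf{q}_i$ occur along $\partial D$ in the same cyclic order as along $\partial P$. That holds for a Jordan curve inside a convex region, but the paper does not argue it.

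\textbf{Your route.} You use the planar identity $\tnorm{(u,v)}=\tfrac12(\abs{u}+\abs{v}+\abs{u-v})$ to write $\lent$ as half the sum of the total variations of $x$, $y$ and $y-x$ along the curve. The supremum over partitions does split this way, since each of the three sums is nondecreasing under refinement and one can pass to common refinements; it is worth saying so explicitly. You then use the bound $V\geq 2(\max-\min)$ for a closed loop, with equality on convex curves.

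\textbf{What each buys.} Your argument needs no ordering of contact points. It also gives the explicit formula that $\ltr{\partial\hat D}$ equals the sum of the widths of $D$ in $x$, $y$ and $y-x$, which makes the reduction transparent. The price is that it rests on the planar tropical norm being a sum of three seminorms, i.e.\ on its dual hexagon being a zonotope. That fails for $n\ge 3$, where $\mathrm{conv}\{\mathbf{e_i}-\mathbf{e_j}\}$ is a cuboctahedron. The paper's geodesic-shortcut idea only uses that adjacent edges of $P$ concatenate to tropical geodesics.

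\textbf{Equality case.} Your argument is more complete than the paper's, which merely asserts it: a closed $D\subseteq\hat D$ with the same area must equal the convex body $\hat D$, since a nonempty relatively open subset of $\hat D$ has positive area. You should add the trivial converse: a tropical disk of radius $R$ has $\lent=6R$ and $\art=3R^2$, so equality holds. You can drop the third equality condition (variation equal to twice the oscillation) as unnecessary. You can also drop the appeal to \cite{Ros26}: you define $\hat D$ directly as an intersection of six half-planes, and the lemma's $a,b,c,d$ parametrization covers any such set.
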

\begin{proof}
	We inscribe $D$ in a tropically geodesic polygon $P$, which is of the form $(a_1 \leq x \leq b_1) \wedge (a_2 \leq y \leq b_2) \wedge (a_3 \leq y-x \leq b_3)$, whose edges lie on supporting lines of $D$ (see Figure~\ref{fig:enclosing_polygon}).
	We treat $P$ as having $6$ edges, say $s_0, \ldots, s_5$, although some of the edges may be reduced to a single point.
	Let $\partial P$ be the boundary of $P$ and let it be positively oriented. Let each oriented edge $s_i$ of $P$ be given by a linear map $s_i : [0,1] \to \RR^2$ and let ${\bf q_i}$ be defined by
	$$
	{\bf q_i} = s_i (t_i),
	$$
	where
	$$
	t_i = \inf \{t \in [0,1] : s_i (t) \in \partial P \cap \partial D \}
	$$
	(see Figure~\ref{fig:enclosing_polygon}). The points ${\bf q_i}$ form a division of both $\partial P$ and $\partial D$. For each $i=0,\ldots,5$, let $\partial P_i$ be the part of $\partial P$ from ${\bf q_i}$ to ${\bf q_{i+1 \: (\mbox{mod } 6)}}$, which is a tropical geodesic that is either a point or a (min or max) tropical line segment.
	Hence, its tropical length $\ltr{\partial P_i}$ is smaller or equal to the tropical length $\ltr{\partial D_i}$ of the corresponding sub-curve $\partial D_i$ of $\partial D$ between ${\bf q_i}$ and ${\bf q_{i+1 \, (\mbox{mod } 6)}}$. This implies that
	\begin{equation}
		\label{eq:perim ineq}
		\lent = \sum_{i=0}^{5}\ltr{\partial D_i} \geq \sum_{i=0}^{5}\ltr{\partial P_i} = \ltr{\partial P}.
	\end{equation}
	On the other hand, since $P$ inscribes $D$, we have
	\begin{equation}
		\label{eq:area ineq}
		\art \leq \tarea{P}.
	\end{equation}
	By \eqref{eq:perim ineq} and \eqref{eq:area ineq},
	\begin{equation}
		\label{eq:inequality}
		\frac{\lent^2}{\art} \geq \frac{(l_{\mathrm{tr}}(\partial P)^2}{\tarea{P}},
	\end{equation}
	By \eqref{eq:inequality} and Lemma~\ref{lem:polygon iso ineq}, we have %Theorem~\ref{thm:vol ball} and Theorem~\ref{thm:surface_sphere}, 
	\begin{equation}
		\label{eq:R2 iso ineq}
		\lent^2 \geq \frac{(l_{\mathrm{tr}}(\partial P)^2}{\tarea{P}} \;  \art \geq 
		\frac{(l_{\mathrm{tr}}(S_{\mathrm{tr}}^1))^2 }{\tarea{B_{\mathrm{tr}}^2}} \;  \art = 12 \art,
	\end{equation}
	with equality holding if and only if $D$ is a tropical ball.
\end{proof}
\begin{remark}
	The tropical isoperimetric inequality in the plane also follows from Theorem~\ref{thm:tr_isoperim_inequality} and Proposition~\ref{prop:measures_equality}.
	%	Since in the plane $\lambda^{1}_{\mathrm{tr}}(\partial S) = \sigma^{1}_{\mathrm{tr}}(\partial S)$ then  infers Theorem~\ref{thm:R2 iso ineq}.
\end{remark}
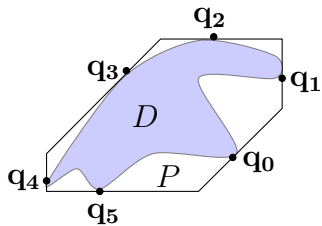
\begin{figure}[h]
	\centering
	\begin{tikzpicture}
		\draw (0,0);
		\begin{scope}
			\draw (0,0) -- (2,0) -- (3.1,1.1) -- (3.1,2.02) -- (1.5,2.02) -- (0,0.5) -- cycle;
			\draw plot [smooth cycle] coordinates {(0,0.1) (1,1.5) (2,2) (3,1.8) (3,1.5) (2,1.5) (2.5,0.5) (1.4,0.5) (0.7,0.03) (0.4,0.3)} [fill=blue!40, opacity=0.5];
			\node at (0.75,-0.3){${\bf q_5}$};
			\node at (2.8,0.39){${\bf q_0}$};
			\node at (3.42,1.45){${\bf q_1}$};
			\node at (2.15,2.3){${\bf q_2}$};
			\node at (0.75,1.6){${\bf q_3}$};
			\node at (-0.3,0.15){${\bf q_4}$};
			\node at (1.3,1){$D$};
			\node at (1.6,0.2){$P$};
			\filldraw [black] (0.7,0) circle (1.2pt);
			\filldraw [black] (2.45,0.45) circle (1.2pt);
			\filldraw [black] (3.1,1.5) circle (1.2pt);
			\filldraw [black] (2.2,2.05) circle (1.2pt);
			\filldraw [black] (1.05,1.6) circle (1.2pt);
			\filldraw [black] (0,0.14) circle (1.2pt);
		\end{scope}
	\end{tikzpicture}
	\caption{A simply connected region $D$ in the plane inscribed in a tropically geodesic set (hexagon) $P$}
	\label{fig:enclosing_polygon}
\end{figure}

\section{Tropical honeycomb theorem in the plane}
\label{sec:honecomb_R2}
The classic honeycomb conjecture states that the most efficient division of the plane into infinitely many regions of the same finite area with a minimal perimeter is into regular hexagons. This ancient problem was proved by Thomas Hales \cite{Hal01}.
A proof for the restricted case of convex polygons was attained 58 years earlier by L\'aszl\'o Fejes  \cite{Fej43}.
\begin{theorem}[Honeycomb theorem in $\RR^2$]
	\label{thm:hexagon}
	Let $G$ be a union of smooth curves in $\RR^2$ that form a locally finite graph and divide the plane into infinitely many bounded simply connected components of unit area. Let $C = \RR^2 \setminus G$ and let $B^2_R \subset \RR^2$ be the disk of radius $R$ centered at the origin. Then
	\begin{equation}
		\label{eq:honecomb}
		\limsup_{R \to \infty}\frac{l(G \cap B^2_R)}{\mathcal{A}(B^2_R)} \geq \sqrt[4]{12} \approx 1.861,
	\end{equation}
	with equality holding for tiling into regular hexagons.
\end{theorem}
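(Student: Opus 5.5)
This is the classical honeycomb theorem, and in the paper it serves as the Euclidean benchmark. The plan is to follow Hales' argument \cite{Hal01}, whose core is a \emph{hexagonal isoperimetric inequality} that takes the place of the classical inequality \eqref{eq:isoperR2}.

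\textbf{Equality case.} Take the tiling by regular hexagons of unit area. Such a hexagon has side $s$ with $\frac{3\sqrt3}{2}s^2=1$, hence perimeter $6s=2\sqrt[4]{12}$. Each edge is shared by two cells, so the perimeter per unit area of $G$ is $\sqrt[4]{12}$. A routine count of the cells meeting $B^2_R$, with boundary effects of order $R$ against area of order $R^2$, shows that the $\limsup$ in \eqref{eq:honecomb} equals $\sqrt[4]{12}$.

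\textbf{Reduction to the torus.} For the inequality, first reduce to a periodic tiling. If the $\limsup$ were smaller than $\sqrt[4]{12}$, cut out a large disk $B^2_R$ in which the ratio is at most $\sqrt[4]{12}-\eta$. Then close it up into a flat torus, losing only $O(R)$ in length and in the number of modified cells. The cells cut by the boundary can be merged or discarded at a cost that is $o(R^2)$. This yields a tiling of a flat torus $\TT$ of area $N\approx R^2$ by $N$ cells, up to an error $o(N)$, whose total edge length is at most $(\sqrt[4]{12}-\eta/2)N$. The remaining task is to prove that on a torus, every such graph has total length at least $\sqrt[4]{12}\,N$.

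\textbf{Hexagonal isoperimetric inequality and summation.} For each cell $P$, with $n$ edges (arcs) of signed curvature deviations $\alpha_i$ (the area between the arc and its chord, taken with sign) and area $A$, prove an inequality of the form
\[
L(P)\;\geq\; -\sum_i T(\alpha_i)\;-\;0.0505\,(n-6)\;+\;2\sqrt[4]{12}\,\min(1,A).
\]
Here $T$ is a fixed odd truncation function, with $T(\alpha)=c\,\alpha$ for small $\alpha$. Then sum this over all cells. Each edge borders two cells with opposite signed $\alpha$, so the $T$-terms cancel pairwise. Euler's formula on the torus gives average degree $6$, so $\sum_P(n_P-6)=0$. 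Since all cells have $A=1$, we obtain $2L(G)\geq 2\sqrt[4]{12}\,N$, which contradicts the bound from the reduction.

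\textbf{Main obstacle.} I expect the main obstacle to be proving the hexagonal isoperimetric inequality itself. I would first treat convex polygons with straight edges, using Fejes T\'oth's argument \cite{Fej43}: among $n$-gons of given area the regular one minimizes perimeter, and the perimeter of the regular $n$-gon is convex in $n$. The difficult part is curved and non-convex cells and cells with very large $|\alpha_i|$. For these I would replace each arc by a circular arc through the same endpoints enclosing the same signed area, which does not increase length. I would then reduce to a finite-dimensional optimization over hexagon-like configurations and handle the remaining cases by estimates, as Hales does.
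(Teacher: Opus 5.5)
Your proposal takes essentially the same route as the paper, which gives no proof of its own and simply cites Hales \cite{Hal01} (with Fejes T\'oth \cite{Fej43} for convex cells). You outline Hales' argument, namely the hexagonal isoperimetric inequality, the odd $T$-terms cancelling across shared edges, and the $(n-6)$-terms controlled by Euler's formula, and, like the paper, you leave the hard core of that inequality to Hales. Three details in the outline should be fixed: Euler's formula only gives $\sum_P (n_P-6)\le 0$ in general (equality holds for trivalent graphs), which is fortunately the direction you need; the flat torus has to be built from a square containing $B^2_R$ with the part outside the disk refilled (e.g.\ by hexagons), not from the disk itself; and that construction leaves $o(N)$ cells of area $\neq 1$, so you must keep the $\min(1,A)$ term for them rather than assert that all cells have area $1$.
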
 
In this inequality, $\mathcal{A}(B^2_R)$ is the area of the disc and $l(G \cap B^2_R)$ is the total length of the part of $G$ at distance at most $R$ from the origin.

We give here the tropical version of this theorem.
\begin{theorem}[Tropical honeycomb theorem in $\RR^2$]
	\label{thm:tr_honecomb}
	Let $G$ be a union of smooth curves in $\RR^2$ that form a locally finite graph and divide the plane into infinitely many bounded simply connected components of (tropical) unit area. Let $C = \RR^2 \setminus G$. Then
	\begin{equation}
		\label{eq:tr_honecomb_R2}
		\limsup_{R \to \infty}\frac{l_{\mathrm{tr}}(G \cap \BRtwo)}{\mathcal{A}_{\mathrm{tr}}(\BRtwo)} \geq \sqrt{3} \approx 1.732,
	\end{equation}
	with equality holding for tiling into tropical regular hexagons (tropical disks).
\end{theorem}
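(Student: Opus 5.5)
Each cell $C_i$ is a bounded simply connected region of unit tropical area, so Theorem~\ref{thm:R2 iso ineq} gives $\ltr{\partial C_i}^2 \geq 12$, that is, $\ltr{\partial C_i} \geq 2\sqrt{3}$. Every point of $G$ outside the vertex set lies on the boundary of exactly two cells; vertices form a set of tropical length zero. Hence, summing over cells, the length of $G$ is roughly half the sum of the cell perimeters, giving about $\sqrt{3}$ per unit area. This is the tropical analogue of Fejes T\'oth's convex-cell argument. Since the tropical isoperimetric inequality holds for arbitrary cells and not only polygons, no Hales-type hexagon reduction is needed.

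To make this rigorous, fix $R$ and let $\mathcal{C}_R$ be the set of cells entirely contained in $\BRtwo$. Each boundary arc of such a cell lies in $G \cap \BRtwo$ and is counted at most twice. Therefore
\[
l_{\mathrm{tr}}(G \cap \BRtwo) \geq \frac{1}{2}\sum_{C \in \mathcal{C}_R} \ltr{\partial C} \geq \sqrt{3}\,\mathrm{card}(\mathcal{C}_R).
\]
Since the cells have unit area, $\mathrm{card}(\mathcal{C}_R) \geq \art(\BRtwo) - \art(N_R)$, where $N_R$ is the union of the cells meeting $\partial \BRtwo$. By \eqref{eq:n-measure}, $\art(\BRtwo)=3R^2$. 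It remains to show $\art(N_R) = o(R^2)$ along a subsequence of radii, which gives the $\limsup$ bound.

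This boundary estimate is the main obstacle, because the hypotheses do not bound cell diameters. I would argue by cases. If $\art(N_R)/R^2 \not\to 0$, then many cells meet the sphere, and each of them has diameter at least comparable to its distance across the annulus it occupies. In that case the perimeter of these cells already contributes extra length inside $\BRtwo$. For example, a cell meeting $\partial\BRtwo$ and having area $a$ inside the annulus $B_{\mathrm{tr},R+t}^2\setminus B_{\mathrm{tr},R}^2$ carries boundary length at least about $2t$ crossing it. From this I would derive that either the boundary layer is negligible or the length ratio exceeds $\sqrt{3}$ anyway. If this proves too delicate, I would fall back on the standard convention in the honeycomb literature, used implicitly in Theorem~\ref{thm:hexagon}: either assume the cells have bounded diameter, or compare the counts through the quantity $\liminf$, which is what the limsup formulation permits.

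For equality, take the tiling of Theorem~\ref{thm:honeycomb} by tropical disks of radius $r=1/\sqrt{3}$, so that each disk has area $3r^2=1$. Each disk has perimeter $6r=2\sqrt{3}$, and each edge is shared by two disks, so the length per cell is $\sqrt{3}$. Boundary effects are $O(R)$, which gives the limit $\sqrt{3}$ exactly. Finally, equality in the inequality forces equality in Theorem~\ref{thm:R2 iso ineq} for asymptotically almost all cells, which identifies them as tropical disks.
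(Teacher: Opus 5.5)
Your strategy is the paper's: apply Theorem~\ref{thm:R2 iso ineq} to each cell to get perimeter at least $2\sqrt{3}$, halve because an edge of $G$ borders at most two cells, and discard a boundary layer. Equality comes from the tiling of Theorem~\ref{thm:honeycomb} by tropical disks of radius $1/\sqrt{3}$. The paper disposes of the boundary layer by remarking that it is $O(R)$. That is clear for the hexagonal tiling, or for cells of uniformly bounded diameter, but it is not argued for a general $G$. You correctly isolate this as the crux, but then leave it open. The case analysis is only gestured at, and the fallback of assuming bounded cell diameters proves a weaker theorem than the one stated. So, as written, the proposal has a gap exactly at the step $\art(N_R)=o(R^2)$.

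The idea you sketch does close it, once you use finiteness of the $\limsup$ to control the number of long cells. If the $\limsup$ is infinite there is nothing to prove. Otherwise $L(r):=\ltr{G\cap B^2_{\mathrm{tr},r}}\le Kr^2$ for all large $r$; put $t=\sqrt{R}$. An (open) cell $U$ in $N_R$ contains a point $\varx$ at tropical distance $R$ from the origin.
\begin{itemize}
\item If $\mathrm{diam}_{\mathrm{tr}}(U)\le t$, then $U$ lies in the tropical annulus between radii $R-t$ and $R+t$. That annulus has area $12Rt$, so there are at most $12Rt$ such cells.
\item If $\mathrm{diam}_{\mathrm{tr}}(U)>t$, then by connectedness $U$ meets every tropical circle $\Sigma_s=\{d_{\mathrm{tr}}(\varx,\cdot)=s\}$ with $0<s<t/2$. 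If $\Sigma_s$ meets $\partial U$ in at most one point, the rest of $\Sigma_s$ is connected, misses $\partial U$ and meets $U$, hence lies in $U$. These circles sweep out area $6s\,ds$ and $U$ has unit area, so this happens only for $s$ in a set of measure at most $1/6$ within $(1,t/2)$. For all other $s$ in that interval, $\Sigma_s$ meets $\partial U$ at least twice. The coarea (Banach indicatrix) inequality for the $1$-Lipschitz function $d_{\mathrm{tr}}(\varx,\cdot)$ then gives $\ltr{\partial U\cap B^2_{\mathrm{tr},R+t}}\ge t-O(1)$. Since each non-vertex point of $G$ borders at most two cells, there are at most $2L(R+t)/(t-O(1))=O(R^2/t)$ such cells.
\end{itemize}
Altogether $\art(N_R)=O(Rt+R^2/t)=O(R^{3/2})$ for every large $R$, not just along a subsequence. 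So in fact $\liminf\ge\sqrt{3}$ whenever the $\limsup$ is finite.

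Two smaller points. First, a cell need only be simply connected, not a Jordan domain; for instance, it may contain a dangling edge of $G$. So apply Theorem~\ref{thm:R2 iso ineq} to the region bounded by the cell's outer boundary curve, which has area at least $1$ and boundary length at most $\ltr{\partial U}$. Second, drop your final sentence. The statement only asserts that the tropical-disk tiling attains $\sqrt{3}$, and the converse you claim does not follow. Equality only forces the average isoperimetric deficit of the cells inside $\BRtwo$ to tend to $0$, not any cell to be a disk. Small area-preserving perturbations of the hexagonal tiling that bend every edge but die out at infinity keep the ratio at $\sqrt{3}$.
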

\begin{proof}
	By Theorem~\ref{thm:R2 iso ineq}, the tropical isoperimetric inequality in $\RR^2$, the tropical disk (a tropical regular hexagon), has the minimal perimeter among all regions enclosed by simple closed curves and of the same area.
	In the case of unit area, this minimal perimeter is $\sqrt{\frac{12}{1}} = 2\sqrt{3}$. Moreover, by Theorem~\ref{thm:honeycomb}, these hexagons tile the plane. Since each edge in the tiling belongs to two tropical hexagons, and since the total perimeter of the hexagons that are in the boundary of the tropical disk $\BRtwo$ can be ignored in the ratio when $R \to \infty$ (it is of order $O(R)$, whereas the area of the disk is of order $O(R^2)$), we get that the limit in \eqref{eq:tr_honecomb_R2} is $\sqrt{3}$.  
\end{proof}
\begin{remarks}
	\begin{enumerate}
		\item Note that the lower bound $\sqrt{3}$ of the honeycomb theorem in the plane in the tropical setting is smaller than that in the Euclidean setting, which is $\sqrt[4]{12}$. As for $l^1$-norm (tiling with $l^{\infty}$-discs) and for $l^{\infty}$-norm (tiling with $l^1$-discs) the limit is 2.
		\item Replacing the tropical ball $\BRtwo$ by the standard ball $B^2_R$ in Theorem~\ref{thm:tr_honecomb} does not change the result.
	\end{enumerate}
\end{remarks}

\section{Isoperimetric inequalities in $\Rn$}
\label{sec:isoperim_Rn}
This section is about the anisotropic isoperimetric inequality in different tropical settings. For simplicity and to avoid special cases, we confine ourselves here to sets that we call admissible, implying that definitions and theorems are mostly not stated in their most general form or with respect to open sets, as is common. 
\begin{definition}
	We say that a set $S \in \Rn$ is {\it $k$-admissible} if it is of dimension $k$, compact and has a $(k-1)$-rectifiable boundary of finite perimeter. 
\end{definition}
Also, when a set is a minimizer of an (anisotropic) isoperimetric problem then it is up to translation and dilation and up to a set of measure zero, e.g. when having a ``corona'' of measure zero.

\subsection{Classical isoperimetric inequality}
We denote the $k$-dimensional Lebesgue measure of a set $S \subset \Rn$ by $\mathcal{L}^k(S)$ and its $k$-dimensional Hausdorff measure by $\mathcal{H}^k(S)$.
Hence, $\mathcal{L}^n(B^n)$ is the volume of a (standard) $n$-dimensional unit ball and $\mathcal{L}^n(B^{k}_{\varepsilon})$ is the volume of a $k$-dimensional ball of radius $\varepsilon$.
Recall that a Borel subset $S \subset \Rn$ is $k$-rectifiable if it is of Hausdorff dimension $k$ and there are countably many $C^1$ (or Lipschitz continuous) maps $f_i : \RR^k \to \Rn$ with $\mathcal{H}^k(S \setminus\bigcup_{i=0}^{\infty}f_i(\RR^k))=0$.
The classical isoperimetric inequality in $\Rn$ is the following (see e.g. \cite[Sec. 3.2.43]{Fed69}, \cite{Oss78}, \cite[Thm 14.1]{Mag12}).
\begin{theorem}[Classical isoperimetric inequality in $\Rn$]
	Let $S \subset \Rn$ be an 
	$n$-admissible set.
	Then
	\begin{equation}
		\label{eq:isoper1}
		(\mathcal{H}^{n-1}(\partial S))^n \geq n^n \mathcal{L}^n(B^n) (\mathcal{L}^n(S))^{n-1},
	\end{equation}
	with equality if and only if $S$ is a ball.
\end{theorem}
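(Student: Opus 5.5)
We plan to prove the classical isoperimetric inequality for $n$-admissible sets by the Brunn--Minkowski route combined with the Minkowski content of the boundary. The first step is the Brunn--Minkowski inequality: for nonempty compact $A,B\subset\Rn$,
\begin{equation*}
\mathcal{L}^n(A+B)^{1/n}\geq \mathcal{L}^n(A)^{1/n}+\mathcal{L}^n(B)^{1/n}.
\end{equation*}
We prove it first for boxes with sides parallel to the axes, where it reduces to the arithmetic--geometric mean inequality. Next we extend it to finite unions of such boxes. Here we use the Hadwiger--Ohmann induction on the total number of boxes: a coordinate hyperplane separates two boxes of $A$, and $B$ is cut by a parallel hyperplane so that the volume fractions agree. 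Finally, compact sets are handled by approximation from outside, using outer regularity of $\mathcal{L}^n$ on compacta.

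Next we apply this with $A=S$ and $B=\varepsilon B^n$. This gives
\begin{equation*}
\mathcal{L}^n(S+\varepsilon B^n)-\mathcal{L}^n(S)\geq \bigl(\mathcal{L}^n(S)^{1/n}+\varepsilon\,\mathcal{L}^n(B^n)^{1/n}\bigr)^n-\mathcal{L}^n(S).
\end{equation*}
We divide by $\varepsilon$ and let $\varepsilon\to0^+$. The left side tends to the outer Minkowski content of $\partial S$, and the right side tends to $n\,\mathcal{L}^n(B^n)^{1/n}\mathcal{L}^n(S)^{(n-1)/n}$. Raising to the $n$-th power yields \eqref{eq:isoper1} with the Minkowski content in place of $\mathcal{H}^{n-1}(\partial S)$.

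The main obstacle is identifying the outer Minkowski content with $\mathcal{H}^{n-1}(\partial S)$. For this we invoke the Federer theorem that for a closed $(n-1)$-rectifiable set, namely the image of a compact set under a Lipschitz map, the Minkowski content equals the Hausdorff measure. This applies to $\partial S$ by admissibility. The outer content is then at most this two-sided content, which suffices for the inequality. If $\partial S$ is only countably rectifiable, we would instead argue via De Giorgi's perimeter. We take $P(S)\leq\mathcal{H}^{n-1}(\partial S)$ and prove the inequality for $P(S)$, using lower semicontinuity of perimeter under smooth approximation $S_k\to S$ in $L^1$ with $P(S_k)\to P(S)$. For smooth sets the Minkowski content equals the area.

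For the equality case we argue in two steps. Equality forces equality in Brunn--Minkowski along the sequence $\varepsilon\to0$. The sharper route is the known characterization, via Steiner symmetrization or Gromov's transport proof, that equality in the perimeter inequality forces $S$ to be a ball up to a null set. In the transport proof, Knothe or Brenier maps $T$ from $S$ to a ball give equality in $\operatorname{div} T\ge n(\det DT)^{1/n}$ only if $DT$ is a multiple of the identity, so $T$ is a translation-dilation. Admissibility, namely compactness and the boundary regularity, then upgrades "ball up to measure zero" to an actual ball, consistent with the paper's convention on minimizers.
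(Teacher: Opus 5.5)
The paper gives no proof of this statement. It only cites it as classical (\cite[Sec.~3.2.43]{Fed69}, \cite[Thm 14.1]{Mag12}), so your proposal has to stand on its own. Your Brunn--Minkowski sketch is standard and fine. The problems are one false step in your main route to the inequality and the handling of the equality case.

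\textbf{The inequality.} The claim that the outer content $\lim_{\varepsilon\to0^+}\varepsilon^{-1}\bigl(\mathcal{L}^n(S+\varepsilon B^n)-\mathcal{L}^n(S)\bigr)$ is at most the two-sided content $\lim \mathcal{L}^n(\partial S+\varepsilon B^n)/(2\varepsilon)=\mathcal{H}^{n-1}(\partial S)$ is false.
\begin{itemize}
\item The inclusion $(S+\varepsilon B^n)\setminus S\subset \partial S+\varepsilon B^n$ only bounds the outer content by $2\mathcal{H}^{n-1}(\partial S)$.
\item The factor $2$ is genuinely lost on lower-dimensional parts of $\partial S$. Take the closed unit disk with a radial segment of length $L$ attached. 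It is $2$-admissible, its outer content is $2\pi+2L$, but $\mathcal{H}^1(\partial S)=2\pi+L$.
\item So the chain $\mathcal{H}^{n-1}(\partial S)\ge$ outer content $\ge n\mathcal{L}^n(B^n)^{1/n}\mathcal{L}^n(S)^{(n-1)/n}$ breaks, even when $\partial S$ is a Lipschitz image.
\end{itemize}

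Your De Giorgi fallback is the correct argument for every admissible $S$, not only in the countably rectifiable case. Take bounded smooth $S_k$ with $\mathcal{L}^n(S_k\triangle S)\to0$ and $P(S_k)\to P(S)$. For these the outer content equals $P(S_k)$, so Brunn--Minkowski gives the inequality for $P(S_k)$. This passes to the limit directly, with no semicontinuity needed. Then conclude with $P(S)=\mathcal{H}^{n-1}(\partial^*S)\le\mathcal{H}^{n-1}(\partial S)$.

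Alternatively, you can keep the Minkowski route by also applying Brunn--Minkowski to the inner parallel set $S\ominus\varepsilon B^n=\{x : x+\varepsilon B^n\subset S\}$.
\begin{itemize}
\item We have $(S\ominus\varepsilon B^n)+\varepsilon B^n\subset S$ and $(S+\varepsilon B^n)\setminus(S\ominus\varepsilon B^n)\subset\partial S+\varepsilon B^n$.
\item With $a=\mathcal{L}^n(S)^{1/n}$ and $b=\mathcal{L}^n(B^n)^{1/n}$, this gives $\mathcal{L}^n(\partial S+\varepsilon B^n)\ge(a+\varepsilon b)^n-(a-\varepsilon b)^n\ge 2n\varepsilon a^{n-1}b$ for $\varepsilon<a/b$.
\item That is exactly the two-sided bound which Federer's theorem turns into $\mathcal{H}^{n-1}(\partial S)$, when $\partial S$ is a Lipschitz image of a bounded set.
\end{itemize}

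\textbf{The equality case.} Your first step, that equality forces equality in Brunn--Minkowski along $\varepsilon\to0^+$, does not follow. Strict inequalities at every $\varepsilon>0$ can become an equality in the limit, so the Minkowski-content argument gives no rigidity.

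The characterization has to come from the perimeter version: De Giorgi via Steiner symmetrization, or a carefully justified transport argument. You reach it through
\[
n\mathcal{L}^n(B^n)^{1/n}\mathcal{L}^n(S)^{(n-1)/n}\le P(S)\le\mathcal{H}^{n-1}(\partial S).
\]
Equality for $\mathcal{H}^{n-1}(\partial S)$ therefore forces equality for $P(S)$. In effect this is the same citation the paper makes.

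Finally, admissibility does not upgrade ``ball up to a null set'' to an actual ball. A closed ball together with one point outside it is $n$-admissible and attains equality. What you actually get is $S=\overline{B}\cup N$ with $\mathcal{H}^{n-1}(N)=0$. This matches the paper's stated ``up to measure zero'' convention, not your literal claim.
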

\subsection{Anisotropic isoperimetric inequality}
The anisotropic isoperimetric inequality is a generalization of the classical isoperimetric inequality, in which the surface area is computed as a weighted integral that is direction dependent (see \cite{Tay78},\cite{Mag12},\cite{Neu20},\cite{FZ22}). An anisotropic character appears, for example, in the surface energy of crystals, where the surface tension is not uniform and the growth of the crystal prefers certain directions that lead to energy minimization. It is then common to use such energy notions for mathematical problems of a similar type.

The {\it anisotropic surface energy} (or $\phi$-surface area) $\Phi(S)$ of an $n$-admissible
set is (see \cite{Mag12})
\begin{equation}
\Phi(S) := \int_{\partial S} \phi(\varv_S(\varx)) \, d\mathcal{H}^{n-1}(\varx),
\end{equation}
where $\varv_S$ is the outer unit normal to $S$ and $\phi : \Rn \to [0,\infty]$, the {\it surface tension} (or $\phi$-norm), is a convex, positive on $S^{n-1}$, one-homogeneous function. In case $\phi(\varv_S)$ is constant (the isotropic case) then $\Phi(S)$ is the (scaled) Euclidean perimeter and the minimizer of the isoperimetric problem is the Euclidean ball. When $\phi(\varv_S)$ is not constant (the anisotropic case) then the minimizer, the {\it Wulff shape} $W_{\phi}$, is a compact convex set given by (see \cite{Mag12}, \cite{Neu20}):
\begin{equation}
	\label{eq:Wulff_shape}
	W_{\phi} = \bigcap_{\varv \in S^{n-1}}	\{ \varx \in \Rn : \langle \varx, \varv \rangle \leq \phi(\varv \},
\end{equation}
and we have (see \cite{Mag12})
\begin{equation}
	\label{eq:energy_volume_ratio}
	\Phi(W_{\phi})=n\mathcal{L}^n(W_{\phi}).
\end{equation}
For example, the Wulff shape for the $l^p$ norm, $p \in [1,\infty]$, is the $l^q$-unit ball, where $l,q$ are Hölder conjugates: $\frac{1}{p}+\frac{1}{q}=1$.

It is not difficult to see that the Wulff shape for the $\phi$-norm is the unit ball with respect to the dual norm $\phi^*$:
\begin{equation}
	\label{eq:Wulff_shape}
	W_{\phi} = \{ \varx \in \Rn : \phi^*(\varx) \leq 1 \},
\end{equation}
where
\begin{equation}
	\label{eq:dual_norm}
	\phi^*(\varx) = \sup_{\varv} \{ \langle \varx, \varv \rangle : \phi(\varv) \leq 1 \}.
\end{equation}

It follows from \eqref{eq:Wulff_shape} that given a compact convex set $S \subset \Rn$ that contains a neighborhood of the origin, it is the Wulff shape for the function $\phi : \Rn \to [0, \infty)$ defined by
\begin{equation}
	\phi(\varx) = \sup \{ \langle \varx, \varv \rangle : \varv \in S \}.
\end{equation}
That is, $\phi(\varx)$ is the {\it support function} of $S$ at $\varx$.

\begin{theorem}[Anisotropic isoperimetric inequality in $\Rn$]
	Let $S \subset \Rn$ be an $n$-admissible set.
	Let $\phi : \Rn \to \RR$ be a positive, convex, coercive, one-homogeneous function and let $W_{\phi}$ be the Wulff shape with respect to the $\phi$-norm. Then
	\begin{equation}
		\label{eq:anisotropic_inequality}
		(\Phi(S))^n \geq n^n \mathcal{L}^n(W_{\phi}) (\mathcal{L}^n(S))^{n-1},
	\end{equation}
	with equality holding if and only if $S=W_{\phi}$.
\end{theorem}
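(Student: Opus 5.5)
The plan is to derive the inequality from the Brunn--Minkowski inequality through a Minkowski-content description of $\Phi$; this is the standard route for Wulff's theorem.

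\textbf{Step 1: express $\Phi(S)$ as a first variation of volume.} Since $\phi$ is the support function of $W_{\phi}$, I would show that for an $n$-admissible set $S$
\begin{equation*}
\Phi(S) = \lim_{\varepsilon \to 0^+} \frac{\mathcal{L}^n(S + \varepsilon W_{\phi}) - \mathcal{L}^n(S)}{\varepsilon}.
\end{equation*}
For convex $S$ this is the classical identity for the mixed volume $V(S,\dots,S,W_{\phi})$. For polytopes one checks it facet by facet: moving a facet with outer normal $\varv$ by $\varepsilon W_{\phi}$ sweeps a slab of width $\varepsilon\,\phi(\varv)$. For general admissible $S$ I would use rectifiability of $\partial S$. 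Only the inequality $\liminf \geq \Phi(S)$ is needed for the isoperimetric bound. Applying the identity to $S=W_{\phi}$ gives $(1+\varepsilon)^n$ growth, which recovers \eqref{eq:energy_volume_ratio}.

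\textbf{Step 2: apply Brunn--Minkowski.} This gives
\begin{equation*}
\mathcal{L}^n(S+\varepsilon W_{\phi})^{1/n} \geq \mathcal{L}^n(S)^{1/n} + \varepsilon\, \mathcal{L}^n(W_{\phi})^{1/n}.
\end{equation*}
Raising to the $n$th power, subtracting $\mathcal{L}^n(S)$, dividing by $\varepsilon$ and letting $\varepsilon\to 0^+$ yields
\begin{equation*}
\Phi(S) \geq n\,\mathcal{L}^n(S)^{(n-1)/n}\mathcal{L}^n(W_{\phi})^{1/n}.
\end{equation*}
This is exactly \eqref{eq:anisotropic_inequality} after raising both sides to the $n$th power.

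\textbf{Step 3: equality (the hard step).} If $S$ is a homothetic copy of $W_{\phi}$, equality follows from \eqref{eq:energy_volume_ratio} and scaling. The converse is the main obstacle. Equality in Brunn--Minkowski characterizes homothetic convex bodies, but here equality holds only in the limit $\varepsilon\to0$, so the equality case does not pass through directly.

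My first attempt would be the Gromov/Knothe transport proof. Let $T=\nabla\varphi$ be the Brenier map from the normalized measure on $S$ to the normalized measure on $W_{\phi}$. Using $\det DT = \mathcal{L}^n(W_{\phi})/\mathcal{L}^n(S)$ and the AM--GM inequality
\begin{equation*}
(\det DT)^{1/n} \leq \tfrac1n\operatorname{div} T,
\end{equation*}
the divergence theorem, and the bound $\langle T, \varv_S\rangle \leq \phi(\varv_S)$ (valid because $T\in W_{\phi}$), one obtains the same inequality. In the equality case AM--GM forces $DT$ to be a multiple of the identity almost everywhere. Hence $T$ is an affine dilation, and $S$ is a translate and dilate of $W_{\phi}$ up to a null set. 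This is the sense of ``$S=W_{\phi}$'' fixed earlier in the section.

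The delicate points are the regularity of $T$ (one uses the distributional divergence, and only the inequality on its singular part is needed) and handling a possibly nonconvex $S$. Following \cite{Mag12}, I would pass to the reduced boundary and use the fact that sets of finite perimeter admit the Gauss--Green formula.
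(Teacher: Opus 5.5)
The paper gives no proof of this statement; it is quoted from \cite{Mag12}. The paper's Brunn--Minkowski remark concerns Theorem~\ref{thm:tr_isoperim_inequality}, where the surface measure is \emph{defined} as a Minkowski content. For the boundary integral $\Phi(S)=\int_{\partial S}\phi(\varv_S)\,d\mathcal{H}^{n-1}$, your Steps 1--2 have a genuine gap, and it comes from a reversed inequality.

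Brunn--Minkowski bounds $\liminf_{\varepsilon\to0^+}\varepsilon^{-1}\bigl(\mathcal{L}^n(S+\varepsilon W_{\phi})-\mathcal{L}^n(S)\bigr)$ from \emph{below}. To conclude anything about $\Phi(S)$ you therefore need $\Phi(S)\geq$ this $\liminf$, i.e.\ $\Phi$ must dominate the outer anisotropic Minkowski content. The inequality you say suffices, $\liminf\geq\Phi(S)$, is the lower-semicontinuity direction, and combined with Brunn--Minkowski it yields nothing. The direction you actually need is false for general $n$-admissible sets. Take a closed disk with a segment of length $L$ and unit normal $\varv$ attached: it is compact, $2$-dimensional, and has rectifiable boundary of finite length, so it is admissible in the paper's sense. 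The $\varepsilon W_{\phi}$-thickening of the segment adds about $\varepsilon L\,(\phi(\varv)+\phi(-\varv))$ to the area. By contrast, $\Phi$ counts the segment at most once, and not at all if, as in \cite{Mag12}, one integrates over the reduced boundary. So the identity you plan to obtain ``using rectifiability of $\partial S$'' fails. Steps 1--2 prove the inequality only for sets on which $\Phi$ equals the Minkowski content: convex bodies, polytopes, and smooth or Lipschitz domains.

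There are two ways to close this.
\begin{itemize}
\item Approximate: take smooth $S_k$ with $\mathcal{L}^n(S_k)\to\mathcal{L}^n(S)$ and perimeters converging. Reshetnyak continuity then gives $\Phi(S_k)\to\Phi(S)$, and you pass to the limit in the smooth-case inequality.
\item Use your own Step 3, which already proves the inequality for every set of finite perimeter. Mollify $T$ (it stays in $W_{\phi}$ by convexity), apply Gauss--Green on $\partial^*S$, and drop the nonnegative singular part of $\Delta\varphi$. With this, Steps 1--2 are unnecessary.
\end{itemize}

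The equality argument is also too quick. ``$DT$ is a multiple of the identity a.e., hence $T$ is an affine dilation'' skips two points. First, equality must also be used to show that the singular part of $DT$ vanishes inside $S$. Second, even then $T=\lambda\varx+c$ holds only on each connected (indecomposable) piece of $S$, possibly with a different $c$ on each piece. Ruling out several pieces requires equality in $\langle T,\varv_S\rangle\le\phi(\varv_S)$ on $\partial^*S$, which forces the trace of $T$ into $\partial W_{\phi}$. That step is where the real work of the equality case lies.
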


\subsection{The tropical norm as the surface tension and the tropical dual norm}
Since the tropical measure is direction dependent then the isoperimetric inequality with respect to it is an anisotropic isoperimetry problem. 

Let us first see what is the Wulff shape in the case that the surface tension is the tropical norm.
We denote by ${\bf e_0}$ the zero vector $\Zero \in \Rn$ and by $\mathrm{conv}(S)$ the (standard) convex hull of a set $S$.
\begin{proposition}
	 The Wulff shape $W^n_{\mathrm{tr}}=W^n_{\tnorm{\cdot}}$ with respect to the tropical surface tension $\phi_{\mathrm{tr}}(\varx)= \, \tnorm{\varx}$ in $\Rn$ is
	\begin{equation}
		W^n_{\mathrm{tr}} = \mathrm{conv}\{{\bf e_i}-{\bf e_j} : 0 \leq i,j \leq n, \, i\neq j \} \subset \Rn.
	\end{equation}
\end{proposition}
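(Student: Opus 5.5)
The plan is to use the dual-norm description of the Wulff shape from \eqref{eq:Wulff_shape}: $W_{\phi}$ is the unit ball of $\phi^*$. So it suffices to identify the tropical norm $\tnorm{\cdot}$ as a support function, namely to show
\begin{equation*}
\tnorm{\varx} \;=\; \max_{0\le i\ne j\le n} \langle \varx, {\bf e_i}-{\bf e_j}\rangle ,
\end{equation*}
with the convention ${\bf e_0}=\Zero$ and ${\bf e_1},\ldots,{\bf e_n}$ the standard basis. If this holds, then $\phi_{\mathrm{tr}}$ is the support function of the compact convex set $K=\mathrm{conv}\{{\bf e_i}-{\bf e_j}\}$. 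By the remark following \eqref{eq:dual_norm}, $K$ is then the Wulff shape. This applies because $K$ is centrally symmetric and contains a neighborhood of the origin: it contains $\pm{\bf e_i}$ for all $i$.

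The key computation is short. Write $x_0:=0$, so that in the representation of Section~\ref{sec:metric} we have $\tnorm{\varx}=\max_{0\le i\le n}x_i-\min_{0\le j\le n}x_j=\max_{i,j}(x_i-x_j)$. Now $\langle \varx,{\bf e_i}-{\bf e_j}\rangle = x_i-x_j$ for all $0\le i,j\le n$, since $\langle\varx,{\bf e_0}\rangle=0=x_0$. Hence the maximum over the generators equals $\tnorm{\varx}$. A linear functional attains its maximum over a convex hull at a generator, so $h_K(\varx)=\sup_{\varv\in K}\langle\varx,\varv\rangle=\tnorm{\varx}$.

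To make the argument self-contained against \eqref{eq:Wulff_shape}, I will also check both inclusions directly.

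For $K\subseteq W$: each generator $\varv={\bf e_i}-{\bf e_j}$ satisfies $\langle\varv,\varx\rangle\le\tnorm{\varx}$ for every $\varx$. This is the same inequality as above, and it is preserved under convex combinations.

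For $W\subseteq K$: suppose $\vary\notin K$. The separating hyperplane theorem gives some $\varx$ with $\langle\vary,\varx\rangle>h_K(\varx)=\tnorm{\varx}$. Rescaling $\varx$ to a unit vector shows that $\vary$ violates one of the defining half-spaces of $W_{\phi}$.

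I expect the main obstacle to be bookkeeping rather than substance. The first point is matching the paper's coordinate conventions: the $(n+1)$-st homogeneous coordinate corresponds to $x_0=0$, i.e.\ to ${\bf e_0}=\Zero$. The second is checking that $\tnorm{\cdot}$ meets the hypotheses on $\phi$ (convex, one-homogeneous, positive on $S^{n-1}$). Convexity and homogeneity follow because it is a maximum of linear forms. Positivity holds because $\tnorm{\varx}=0$ forces all $x_i$ to equal $x_0=0$.

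As a sanity check, for $n=2$ the set $K$ is the hexagon with vertices $\pm{\bf e_1},\pm{\bf e_2},\pm({\bf e_1}-{\bf e_2})$. This is the tropical disk reflected across the anti-diagonal, which is consistent with the later discussion of the tropical dual ball.
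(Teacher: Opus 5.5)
Your argument is correct and is essentially the paper's. Both rest on the identity $\tnorm{\varx}=\max_{\varv\in S}\langle\varx,\varv\rangle$ with ${\bf e_0}=\Zero$, so that $\phi_{\mathrm{tr}}$ is the support function of $\mathrm{conv}(S)$ and the Wulff shape, i.e.\ the dual unit ball, is $\mathrm{conv}(S)$; your separation-theorem check of the two inclusions only makes explicit what the paper leaves implicit.

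One slip in your sanity check: the tropical disk $\{\abs{x}\le1,\abs{y}\le1,\abs{x-y}\le1\}$ is invariant under reflection in the anti-diagonal. So $K=\{\abs{x}\le1,\abs{y}\le1,\abs{x+y}\le1\}$ is instead its reflection in a coordinate axis, which is the same set as its rotation by $90^{\circ}$, as the paper says.
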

\begin{proof}
	Let $S = \{{\bf e_i}-{\bf e_j} : 0 \leq i,j \leq n, \, i\neq j \}$. These are normal vectors to the facets of the tropical ball. The tropical norm of $\varx \in \Rn$ is
	\begin{equation}
		\tnorm{\varx} = \max_{\varv \in S} \{ \langle \varx, \varv \rangle \}.
	\end{equation}
	Thus, the tropical surface tension is crystalline, for which the unit ball $\dualBn$ with respect to the dual norm
	%=B^n_{\dualtnorm{\cdot}}$,
	is the polytope 
	\begin{equation}
		 \dualBn = \mathrm{conv}(S),
	\end{equation}
	and by \eqref{eq:Wulff_shape}, $W^n_{\mathrm{tr}} = \dualBn = \mathrm{conv}(S)$.
\end{proof}
\begin{example}
	In $\RR^2$ the Wulff shape $W^2_{\mathrm{tr}}$ with respect to the tropical norm, the tropical dual ball, is a rotation by $90^{\circ}$ of the tropical ball $B^2_{\mathrm{tr}}$ (see Figure~\ref{fig:tr_Wulff_shape}).
	In higher dimensions the tropical ball is less similar to its dual: for example, the tropical ball has $2^{n+1}-2$ vertices whereas the tropical dual ball has $n(n+1)$ vertices, which is the number of facets of $B^2_{\mathrm{tr}}$, and similarly for $k$ versus $n-1-k$ faces. 
\end{example}
As is evident from the set of vertices of the tropical unit ball (see \cite{Ros26}) and by \eqref{eq:dual_norm}, the {\it tropical dual norm} of $\varx = (x_1, \ldots,  x_n)$ is
\begin{equation}
	\label{eq:tr_dual_norm}
	\dualtnorm{\varx} = \phi^*_{\mathrm{tr}}(\varx) = \max_{I \subseteq \{1,\ldots,n\}} \{\vert \sum_{i \in I} x_i \vert\},
\end{equation}
that is, the maximum between the sum of the positive entries of $\varx$ and the absolute value of the sum of the negative entries of $\varx$.
Equivalently, it can be computed as follows.
Let
$\tilde{x}_i = \langle \varx, {\bf \tilde{e}_i} \rangle$, for $i=1,\ldots,n+1$, where, as before, ${\bf \tilde{e}_{n+1}} = -\One$.
Then
\begin{equation}
	\dualtnorm{\varx} = \sum_{i=1}^{n+1} \max\{0,\tilde{x}_i\}
	= \frac{1}{2} \sum_{i=1}^{n+1} \vert \tilde{x}_i \vert.
\end{equation}
Note that
\begin{equation}
	\dualtnorm{\varx} \geq \half \sum_{i=1}^{n} \vert x_i \vert = \half \norm{\varx}_1 \, \geq \half \norm{\varx},
\end{equation}
implying that the tropical dual norm is coercive.
On the other hand,
\begin{equation}
	\dualtnorm{\varx} \leq \sum_{i=1}^{n} \vert x_i \vert  = \, \norm{\varx}_1 \,
	\leq \sqrt{n} \norm{\varx}.
\end{equation}
The {\it tropical dual unit ball} is then
\begin{equation}
	\label{eq:tr_dual_unit_ball}
	\dualBn = \{(x_1, \ldots,  x_n) : \max_{I \subseteq \{1,\ldots,n\}} \{\vert \sum_{i \in I} x_i \vert \} \leq 1 \}.
\end{equation}
The {\it tropical dual distance} between $\varx$ and $\vary$ is 
\begin{equation}
	d^*_{\mathrm{tr}}(\varx,\vary) = \, \dualtnorm{\varx-\vary} = \max_{I \subseteq \{1,\ldots,n\}} \{\vert \sum_{i \in I} x_i - y_i \vert\}.
\end{equation}
%\end{remark}
\begin{figure}[h]
	\centering
	\begin{tikzpicture}
		\begin{axis}[
			axis x line=middle,
			axis y line=middle,
			grid = major,
			width=7cm,
			height=7 cm,
			grid style={dashed, gray!80},
			xmin=-2.0,     % start the diagram at this x-coordinate
			xmax= 2.0,    % end   the diagram at this x-coordinate
			ymin= -2.0,     % start the diagram at this y-coordinate
			ymax= 2.0,   % end the diagram at this y-coordinate
			xlabel=$x$,
			ylabel=$y$,
			/pgfplots/xtick={-1.0, 0.0, 1.0}, % make steps of length 0.1
			/pgfplots/ytick={-1.0, 0.0, 1.0}, % make steps of length 0.1
			]
			\draw[thick,black] (1.0,0.0) -- (1.0,1.0) -- (0.0,1.0) -- (-1.0,0.0) -- (-1.0,-1.0) -- (0.0,-1.0) -- cycle [fill=blue!20, opacity=0.3];
			\draw[thick,black] (1.0,0.0) -- (1.0,1.0) -- (0.0,1.0) -- (-1.0,0.0) -- (-1.0,-1.0) -- (0.0,-1.0) -- cycle;
		\end{axis}
	\end{tikzpicture}
	\qquad
	\begin{tikzpicture}
		\begin{axis}[
			axis x line=middle,
			axis y line=middle,
			grid = major,
			width=7cm,
			height=7 cm,
			grid style={dashed, gray!80},
			xmin=-2.0,     % start the diagram at this x-coordinate
			xmax= 2.0,    % end   the diagram at this x-coordinate
			ymin= -2.0,     % start the diagram at this y-coordinate
			ymax= 2.0,   % end the diagram at this y-coordinate
			xlabel=$x$,
			ylabel=$y$,
			/pgfplots/xtick={-1.0, 0.0, 1.0}, % make steps of length 0.1
			/pgfplots/ytick={-1.0, 0.0, 1.0}, % make steps of length 0.1
]
			\draw[thick,black] (1.0,0.0) -- (0.0,1.0) -- (-1.0,1.0) -- (-1.0,0.0) --  (0.0,-1.0) -- (1.0,-1.0) -- cycle [fill=blue!20, opacity=0.3];
			\draw[thick,black] (1.0,0.0) -- (0.0,1.0) -- (-1.0,1.0) -- (-1.0,0.0) --  (0.0,-1.0) -- (1.0,-1.0) -- cycle;
		\end{axis}
	\end{tikzpicture};
	\caption{The tropical unit ball in the plane (left) and its dual, the Wulff shape with respect to the tropical norm (right)}
	\label{fig:tr_Wulff_shape}
\end{figure}
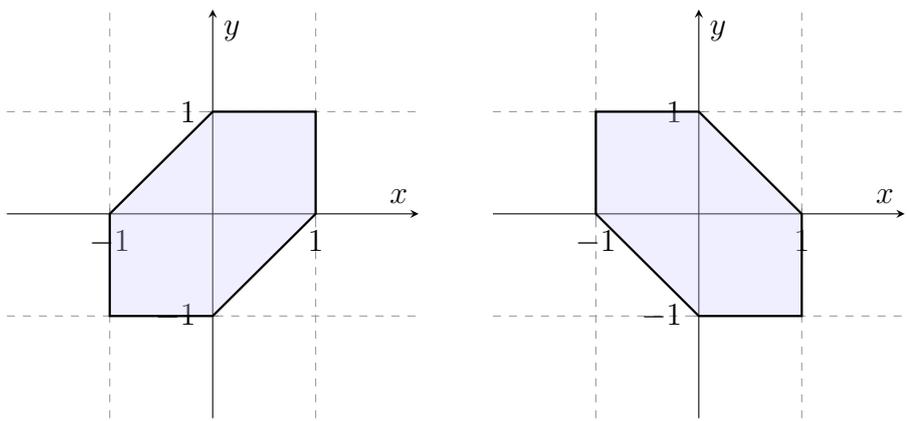

\subsection{The tropical ball as the Wulff shape}
\label{subsec:Wulff_shape}
Let us now examine the case in which the Wulff shape is the tropical unit ball.
By the shape of $\Bn$ and the discussion above we have
\begin{equation}
	\Bn = \mathrm{conv}(S^*),
\end{equation}
where
\begin{equation}
	S^* = \{ \varv^* \in (\{0,1\}^n \cup \{0,-1\}^n) \setminus\{\Zero\}\} \subset \Rn.
\end{equation}
The vectors $\varv^*$ are in the directions of the vertices of $\Bn$ and these are normal vectors to the facets of the dual tropical ball $\dualBn$. The tropical dual surface tension is
\begin{equation}
	\phi^*_{\mathrm{tr}}(\varx)= \, \dualtnorm{\varx} = \max_{\varv^*\in S^*} \{ \langle \varx, \varv^* \rangle \},
\end{equation}
the support function of $S^*$ at $\varx$, which is the support function of $\Bn$ at $\varx$.

The surface norm $\phi^*_{\mathrm{tr}}$ defines a surface measure that can be extended to yet another measure on $\Rn$ that is based on tropical balls, a tropical analogue of Minkowski content, here confined to $k$-admissible sets. 
\begin{definition}
	\label{def:tr_Minkowski_content}
	Let $S \subset \Rn$ be a $k$-admissible set,
	%$k$-rectifiable compact set,
	$k \leq n$. We define the {\it $k$-dimensional tropical Minkowski measure} of $S$ to be
\begin{equation}
	\label{eq:tr_Minkowski_content}
	\mathcal{M}^{k}_{\mathrm{tr}}(S) := 
	%\frac{1}{n-k+1}\lim_{\varepsilon \to 0^+} \frac{\sigma^{n}_{\mathrm{tr}}(S+\Ben)}{\varepsilon^{n-k}}=
	\frac{1}{n-k+1}\lim_{\varepsilon \to 0^+} \frac{\mathcal{L}^{n}(S+\Ben)}{\varepsilon^{n-k}}.
\end{equation}
\end{definition}
\begin{remarks}
	\begin{enumerate}
		\item The term $n-k+1$ in \eqref{eq:tr_Minkowski_content} refers to the tropical volume of a tropical unit ball of dimension $n-k$.
		\item When $k=n$ we have $\mathcal{M}^{n}_{\mathrm{tr}}(S)=\sigma^{n}_{\mathrm{tr}}(S)=\mathcal{L}^{n}_{\mathrm{tr}}(S)$.
		\item For a general set, $\mathcal{M}^{k}_{\mathrm{tr}}(S)$ is not a measure, as is the case with classical Minkowski content.
	\end{enumerate}
\end{remarks}
In particular, for $S \subset \Rn$ an $n$-admissible set, its surface area is
\begin{equation}
	\mathcal{M}^{n-1}_{\mathrm{tr}}(\partial S) := \frac12 \lim_{\varepsilon \to 0^+} \frac{\sigma^{n}_{\mathrm{tr}}(\partial S+\Ben)}{\varepsilon} =
	\frac12 \lim_{\varepsilon \to 0^+} \frac{\mathcal{L}^{n}(\partial S+\Ben)}{\varepsilon}.
\end{equation}
An equivalent derivative-form definition of the surface area is the following.
\begin{definition}
	\label{def:tr_Steiner}
	Let $S \subset \Rn$ be an $n$-admissible set.
	We define the {\it tropical Minkowski-Steiner surface area} of $S$ to be 	\begin{equation}
		\label{eq:tr_Steiner}
		\lambda^{n-1}_{\mathrm{tr}}(\partial S) := %\liminf_{\varepsilon \to 0^+}
		\lim_{\varepsilon \to 0^+} \frac{\mathcal{L}^n(S+\Ben) - \mathcal{L}^n(S)}{\varepsilon}.
	\end{equation}
\end{definition}
As is known, this derivative-type expression defines an anisotropic surface energy $\Phi(S)$ (for a rectifiable surface) because  $h_{\Ben}(\varx)$, the tropical $\varepsilon$-thickening of $S$ at $\varx \in \partial S$ through $S+\Ben$, is exactly $\varepsilon \cdot \phi^*_{\mathrm{tr}}(\varv)$, the (standard) norm of the orthogonal projection of the tropical ball of radius $\varepsilon$ on the unit outer normal $\varv=\varv_S(\varx)$.
Then we have
\begin{equation}
	h_{\Ben}(\varx) = \varepsilon \cdot \dualtnorm{\varv} = \varepsilon \cdot \phi^*_{\mathrm{tr}}(\varv)
	= \varepsilon \cdot \max_{I \subseteq \{1,\ldots,n\}} \{\vert \sum_{i \in I} v_i \vert\}.
\end{equation}
The fact that the tropical Minkowski measure equals Lebesgue measure on $n$-dimensional subsets of $\Rn$ and $\mathcal{M}^{n-1}_{\mathrm{tr}}(\partial S)$ is an anisotropic surface energy that is the support function of the tropical ball is what enables the use of Brunn-Minkowski Inequality in proving that the tropical ball is the Wulff shape. 
The celebrated Brunn-Minkowski Inequality (\cite{Bru1887}, \cite{Min1896}. See also \cite{Gar02}, \cite[Sec. 3.2.41]{Fed69}, \cite{Sch14}) is the following.
\begin{theorem}[Brunn-Minkowski Inequality]
	Let $A,B \in \Rn$ be $n$-dimensional non-empty measurable sets. Then
	\begin{equation}
		(\mathcal{L}^n(A+B))^{1/n} \geq (\mathcal{L}^n(A))^{1/n} + (\mathcal{L}^n(B))^{1/n}.
	\end{equation}  
\end{theorem}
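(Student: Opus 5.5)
The plan is to follow the classical Hadwiger--Ohmann argument: prove the inequality first for axis-parallel boxes, then for finite unions of boxes with disjoint interiors by induction, and finally pass to general sets by approximation. Tropical notions play no role, since the statement concerns only Lebesgue measure and the standard Minkowski sum. If $\mathcal{L}^n(A)=0$ or $\mathcal{L}^n(B)=0$ the inequality is immediate, because $A+B$ contains a translate of the other set. So we may assume both measures are positive. If $A+B$ is not measurable we read the left-hand side as inner measure; for compact $A,B$ the sum is compact, so this issue disappears after the approximation step.

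\emph{Step 1 (boxes).} Let $A$ and $B$ be boxes with side lengths $a_i$ and $b_i$. Then $A+B$ is a box with sides $a_i+b_i$. After dividing through by $\prod(a_i+b_i)^{1/n}$, the inequality becomes
$$\prod_i\Bigl(\tfrac{a_i}{a_i+b_i}\Bigr)^{1/n}+\prod_i\Bigl(\tfrac{b_i}{a_i+b_i}\Bigr)^{1/n}\le 1 .$$
By the AM--GM inequality, the left-hand side is at most $\frac1n\sum_i \frac{a_i+b_i}{a_i+b_i}=1$.

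\emph{Step 2 (finite unions of boxes).} This is the main combinatorial step. We induct on the total number $N\ge 2$ of boxes in $A$ and $B$, with $A$ and $B$ each a union of boxes with disjoint interiors. Suppose $A$ has at least two boxes. Two disjoint-interior boxes are separated by some coordinate hyperplane, so we can choose one, say $\{x_j=c\}$, with at least one whole box of $A$ on each side. Let $A^\pm = A\cap\{\pm(x_j-c)\ge 0\}$; each has strictly fewer boxes than $A$. Next translate $B$ along $e_j$ (this changes neither measure), and split it by the hyperplane $\{x_j=c'\}$ chosen so that
$$\frac{\mathcal{L}^n(B^\pm)}{\mathcal{L}^n(B)}=\frac{\mathcal{L}^n(A^\pm)}{\mathcal{L}^n(A)}=:\theta_\pm ,$$
which is possible by continuity of $c'\mapsto \mathcal{L}^n(B\cap\{x_j\le c'\})$. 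The boxes of $B^\pm$ number at most those of $B$, so each pair $(A^\pm,B^\pm)$ has fewer than $N$ boxes in total. The sets $A^++B^+$ and $A^-+B^-$ lie in opposite closed half-spaces $\{x_j\ge c+c'\}$ and $\{x_j\le c+c'\}$, so their measures add. Applying the induction hypothesis to each pair gives
$$\mathcal{L}^n(A+B)\ge \sum_\pm\bigl(\mathcal{L}^n(A^\pm)^{1/n}+\mathcal{L}^n(B^\pm)^{1/n}\bigr)^n=\sum_\pm\theta_\pm\bigl(\mathcal{L}^n(A)^{1/n}+\mathcal{L}^n(B)^{1/n}\bigr)^n ,$$
and $\theta_++\theta_-=1$ finishes the step. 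The delicate point is the bookkeeping that guarantees the box count really drops. If $A$ is a single box we swap the roles of $A$ and $B$; if both are single boxes we are in Step 1.

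\emph{Step 3 (general sets).} For compact $A,B$, approximate from outside by finite unions of boxes $A_k\downarrow A$, $B_k\downarrow B$, for instance unions of closed dyadic cubes meeting the $1/k$-neighbourhood. Then $A_k+B_k$ decreases to $A+B$: this uses compactness, since a limit of sums $a_k+b_k$ has convergent subsequences in $A$ and $B$. Continuity of measure from above then passes the inequality to the limit. For arbitrary measurable $A,B$ of positive measure, inner regularity supplies compact $K\subseteq A$ and $L\subseteq B$ with measures arbitrarily close to those of $A$ and $B$. Since $K+L\subseteq A+B$, the inequality follows by letting the approximations improve.
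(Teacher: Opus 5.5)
Your argument is correct. It is the classical Hadwiger--Ohmann proof: boxes via AM--GM, then finite unions of boxes by cutting $A$ along a separating coordinate hyperplane and cutting $B$ at the level that reproduces the volume fractions $\theta_\pm$, then compact sets by outer approximation, and finally measurable sets by inner regularity.

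The paper gives no proof of its own. It quotes the inequality as a classical result from the literature (e.g.\ \cite{Gar02}, where essentially this same argument appears), so there is nothing to compare beyond noting that your version is self-contained.

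A few small remarks:
\begin{itemize}
\item The translation of $B$ in Step~2 is superfluous once you choose the cutting level $c'$ freely.
\item When forming $A^\pm$ and $B^\pm$, discard the lower-dimensional faces of boxes that merely touch the cutting hyperplane. This is what makes the box count genuinely drop, which is the bookkeeping point you flagged yourself.
\item Because each side of the cut contains a whole box of $A$, you get $\theta_\pm>0$. This guarantees that $B^\pm$ are nonempty unions of nondegenerate boxes, so the induction hypothesis applies.
\item Your inner-measure convention is a genuine refinement of the statement as the paper writes it. For merely Lebesgue-measurable $A,B$, the sum $A+B$ need not be measurable.
\end{itemize}
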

So, for clarity, let us state the theorem about the tropical ball being the Wulff shape with respect to the tropical Minkowski measure. Its proof using Brunn-Minkowski Inequality is similar to the classical proof for the Euclidean ball.
\begin{theorem}[Tropical isoperimetric inequality]
	\label{thm:tr_isoperim_inequality}
	Let $S \subset \Rn$ be an $n$-admissible set.
	Then
	\begin{equation}
		\label{eq:tr_isoper}
		(\mathcal{M}^{n-1}_{\mathrm{tr}}(\partial S))^n \geq 
		n^n (n+1) (\mathcal{M}^{n}_{\mathrm{tr}}(S))^{n-1},
	\end{equation}
	with equality if and only if $S$ is a tropical ball.
\end{theorem}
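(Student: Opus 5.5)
The plan is to reduce the statement to the classical Brunn--Minkowski argument, applied to the Minkowski--Steiner form $\lambda^{n-1}_{\mathrm{tr}}$ rather than directly to $\mathcal{M}^{n-1}_{\mathrm{tr}}$.

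\textbf{Step 1: identify the two surface quantities.} First I would show that for an $n$-admissible set $S$ one has $\mathcal{M}^{n-1}_{\mathrm{tr}}(\partial S)=\lambda^{n-1}_{\mathrm{tr}}(\partial S)$. The thickening $\partial S+\Ben$ splits, up to a set of measure $o(\varepsilon)$, into two parts:
\begin{itemize}
\item an outer layer $(S+\Ben)\setminus S$, whose volume is by definition $\varepsilon\lambda^{n-1}_{\mathrm{tr}}(\partial S)+o(\varepsilon)$;
\item an inner layer $S\setminus\{\varx : \varx+\Ben\subseteq S\}$.
\end{itemize}
Since $\partial S$ is $(n-1)$-rectifiable, at $\mathcal{H}^{n-1}$-almost every boundary point with outer normal $\varv$ the outer layer has thickness $\varepsilon\,\phi^*_{\mathrm{tr}}(\varv)$, as computed just before Definition~\ref{def:tr_Steiner}. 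The inner layer has thickness $\varepsilon\,\phi^*_{\mathrm{tr}}(-\varv)$. Because $\Bn$ is centrally symmetric, these two thicknesses coincide. Hence the inner layer also contributes $\varepsilon\Phi(S)+o(\varepsilon)$, where $\Phi(S)=\int_{\partial S}\phi^*_{\mathrm{tr}}(\varv_S)\,d\mathcal{H}^{n-1}$. The factor $\frac12$ in the definition of $\mathcal{M}^{n-1}_{\mathrm{tr}}$ then gives $\mathcal{M}^{n-1}_{\mathrm{tr}}(\partial S)=\lambda^{n-1}_{\mathrm{tr}}(\partial S)=\Phi(S)$.

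I expect this to be the main obstacle. Making the inner-layer estimate rigorous for a general finite-perimeter set, rather than for a polytope or smooth set, needs a blow-up or density argument at points of the reduced boundary, together with control of the non-rectifiable remainder. My first approach would be to prove the identity for polytopes, where it is an explicit facet-by-facet computation. I would then pass to admissible sets by approximation in perimeter, accepting the standard GMT facts for that limit.

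\textbf{Step 2: the inequality.} Write $V=\mathcal{L}^n(S)$, which equals $\mathcal{M}^n_{\mathrm{tr}}(S)$ by Theorem~\ref{thm:n-measure}. Apply the Brunn--Minkowski inequality to $S$ and $\Ben=\varepsilon\Bn$:
\[
\mathcal{L}^n(S+\Ben)\ \geq\ \bigl(V^{1/n}+\varepsilon(n+1)^{1/n}\bigr)^n
= V + n\varepsilon (n+1)^{1/n}V^{(n-1)/n}+O(\varepsilon^2).
\]
Here I use $\mathcal{L}^n(\Bn)=n+1$, also from Theorem~\ref{thm:n-measure}. Subtract $V$, divide by $\varepsilon$ and let $\varepsilon\to0^+$. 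This gives
\[
\lambda^{n-1}_{\mathrm{tr}}(\partial S)\ \geq\ n(n+1)^{1/n}V^{(n-1)/n}.
\]
Raising both sides to the $n$-th power and using Step 1 yields \eqref{eq:tr_isoper}.

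\textbf{Step 3: equality.} For $S=\BRn$, note that $S+\Ben=B^n_{\mathrm{tr},R+\varepsilon}$. Its volume is $(n+1)(R+\varepsilon)^n$, so $\lambda^{n-1}_{\mathrm{tr}}(\partial S)=n(n+1)R^{n-1}$. Substituting, both sides of \eqref{eq:tr_isoper} equal $n^n(n+1)^nR^{n(n-1)}$, so tropical balls attain equality.

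For the converse, Step 1 shows that the left-hand side is exactly the anisotropic energy $\Phi(S)$ with surface tension $\phi^*_{\mathrm{tr}}$. This surface tension is the support function of $\Bn$. By \eqref{eq:Wulff_shape}, the associated Wulff shape is therefore $W_{\phi^*_{\mathrm{tr}}}=\Bn$. Moreover, $\phi^*_{\mathrm{tr}}$ is convex, one-homogeneous and coercive, by the bounds on $\dualtnorm{\cdot}$ established above. The equality case of the anisotropic isoperimetric inequality \eqref{eq:anisotropic_inequality} then forces $S$ to be a translate and dilate of $\Bn$ (up to a null set), that is, a tropical ball.

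I prefer this route to extracting the equality case from Brunn--Minkowski directly. Equality in Brunn--Minkowski gives homothety only after one knows that $S$ is convex, and for a general admissible $S$ that is not available.
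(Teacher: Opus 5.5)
Your route is the one the paper intends. The paper writes no proof beyond calling it the classical Brunn--Minkowski argument. It treats $\lambda^{n-1}_{\mathrm{tr}}$ as an equivalent form of $\mathcal{M}^{n-1}_{\mathrm{tr}}$, and it reads the surface term as the anisotropic energy with tension $\phi^*_{\mathrm{tr}}=h_{\Bn}$, whose Wulff shape is $\Bn$. Your Steps 2 and 3 and your appeal to the anisotropic equality case are exactly this.

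\textbf{The gap is in Step 1.}
\begin{itemize}
\item Approximation in perimeter controls $\int\phi^*_{\mathrm{tr}}(\varv)\,d\mathcal{H}^{n-1}$ over reduced boundaries (Reshetnyak continuity). It says nothing about $\mathcal{M}^{n-1}_{\mathrm{tr}}(\partial S)$, which sees the topological boundary and is not continuous along such sequences.
\item The identity $\mathcal{M}^{n-1}_{\mathrm{tr}}=\lambda^{n-1}_{\mathrm{tr}}$ actually fails for some sets that admissibility allows. Take $S=B^2_{\mathrm{tr}}\cup([1,2]\times\{0\})$. The attached segment counts once in $\mathcal{M}^1_{\mathrm{tr}}$ and twice in $\lambda^1_{\mathrm{tr}}$, giving $7$ versus $8$.
\item The paper simply asserts this equivalence, so it shares the imprecision.
\end{itemize}

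\textbf{You do not need Step 1 for the inequality.} Put $S\ominus\Ben=\{\varx:\varx+\Ben\subseteq S\}$. Convexity and central symmetry of $\Bn$ give the disjoint inclusion
\[
\partial S+\Ben\supseteq\bigl((S+\Ben)\setminus S\bigr)\cup\bigl(S\setminus(S\ominus\Ben)\bigr).
\]
Brunn--Minkowski bounds the first piece as in your Step 2. Applied to $(S\ominus\Ben)+\Ben\subseteq S$, it bounds the second piece below by $V-\bigl(V^{1/n}-\varepsilon(n+1)^{1/n}\bigr)_+^n$. Adding the two bounds gives $\mathcal{L}^n(\partial S+\Ben)\geq 2n\varepsilon(n+1)^{1/n}V^{(n-1)/n}$ for small $\varepsilon$. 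This is \eqref{eq:tr_isoper} for $\mathcal{M}^{n-1}_{\mathrm{tr}}$ directly.

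\textbf{For the equality case, a one-sided bound suffices:}
\[
\mathcal{M}^{n-1}_{\mathrm{tr}}(\partial S)\geq\int_{\partial^*S}\phi^*_{\mathrm{tr}}(\varv_S)\,d\mathcal{H}^{n-1}.
\]
It follows from lower semicontinuity of the anisotropic total variation, applied to $(1-d_{\mathrm{tr}}(\cdot,S)/\varepsilon)_+$ and to the analogous function for $\overline{\Rn\setminus S}$. The equality case of the anisotropic inequality then applies as you intended, up to the null sets the paper already tolerates.

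\textbf{A correction to your last remark.} Convexity is not what blocks reading equality off Brunn--Minkowski, since its equality case holds for arbitrary compact sets. The real obstruction is that equality in \eqref{eq:tr_isoper} only forces equality to first order in $\varepsilon$, never in Brunn--Minkowski at a fixed $\varepsilon$.
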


Next, we compare the tropical measure $\sigma_{\mathrm{tr}}^{n-1}(\partial S)$ with the tropical Minkowski measure $\mathcal{M}_{\mathrm{tr}}^{n-1}(\partial S)$ on the surface of a polytope $S$.
Assume, w.l.o.g., that a facet $F$ of $S$ passes through the origin and is on the hyperplane $\varv^{\perp}$, with $\varv$ a unit vector.
By definition,
\begin{equation}
	\sigma_{\mathrm{tr}}^{n-1}(\varv^{\perp} \cap \Bn) = n.
\end{equation}
It follows that by expressing $\sigma_{\mathrm{tr}}^{n-1}(\partial S)$ as a surface energy then 
\begin{equation}
	\sigma_{\mathrm{tr}}^{n-1}(\partial S) =
	\int_{\partial S} \frac{n}{\mathcal{L}^{n-1}(\varv^{\perp} \cap \Bn)} \, d\mathcal{H}^{n-1}.
\end{equation}
Let $h_{\Bn}(\varv)$ be the support function of $\Bn$ at $\varv$, which is the surface tension $\phi^*_{\mathrm{tr}}(\varv)$. Then
\begin{equation}
	\mathcal{M}_{\mathrm{tr}}^{n-1}(\varv^{\perp} \cap \Bn) = \int_{\varv^{\perp} \cap \Bn} h_{\Bn}(\varv)\, d\mathcal{H}^{n-1} = \mathcal{L}^{n-1}(\varv^{\perp} \cap \Bn) \cdot h_{\Bn}(\varv).
\end{equation}

Extending to $n$-admissible sets, we have the following.
\begin{lemma}
	\label{lem:base_times_height}
	Let $S \in \Rn$ be an $n$-admissible set.
	% let $V$ be the set of all unit normals to the members of $\mathcal{C}$.
	If, for almost all $\varx \in \partial S$,
	\begin{equation}
		\label{eq:cond_base_times_height}
		\mathcal{L}^{n-1}(\varv_S(\varx)^{\perp} \cap \Bn) \cdot h_{\Bn}(\varv_S(\varx))=n,
	\end{equation}
	then $\mathcal{M}_{\mathrm{tr}}^{n-1}(\partial S) = \sigma_{\mathrm{tr}}^{n-1}(\partial S)$.
\end{lemma}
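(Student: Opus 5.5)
The plan is to write both surface measures as integrals over $\partial S$ against $\mathcal{H}^{n-1}$ with explicit densities, and then show that hypothesis \eqref{eq:cond_base_times_height} forces the two densities to agree almost everywhere.

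\emph{Density of the Minkowski measure.} Since $S$ is $n$-admissible, $\partial S$ is $(n-1)$-rectifiable with finite perimeter. At $\mathcal{H}^{n-1}$-almost every $\varx\in\partial S$ there is an approximate tangent plane $\varv_S(\varx)^{\perp}$. Locally near such a point, the $\varepsilon$-thickening $\partial S+\Ben$ looks like a slab around this plane. The slab has total width $2\varepsilon\, h_{\Bn}(\varv_S(\varx))$, since $\Bn$ is centrally symmetric and contributes $\varepsilon h_{\Bn}(\varv)$ on each side. Using the standard Minkowski-content argument for rectifiable sets (Federer's theorem on Minkowski content of rectifiable closed sets, adapted to the convex gauge $\Bn$), I would show
\begin{equation*}
\mathcal{M}^{n-1}_{\mathrm{tr}}(\partial S)=\frac12\lim_{\varepsilon\to0^+}\frac{\mathcal{L}^n(\partial S+\Ben)}{\varepsilon}=\int_{\partial S} h_{\Bn}(\varv_S(\varx))\,d\mathcal{H}^{n-1}(\varx).
\end{equation*}
This is the computation already displayed for a single facet, extended by localization.

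\emph{Density of the tropical spherical measure.} For $\sigma^{n-1}_{\mathrm{tr}}$, I would follow the proof of Theorem~\ref{thm:surface_sphere}. Near a point with tangent plane $\varv^{\perp}$, cover $\partial S$ by small tropical balls centred on the surface. Each ball meets the plane in the section $\varv^{\perp}\cap\Bdn$, whose $\mathcal{H}^{n-1}$-measure is $\delta^{n-1}\mathcal{L}^{n-1}(\varv^{\perp}\cap\Bn)$, while its contribution to \eqref{eq:tr_epsilon_measure} is $n\delta^{n-1}$. Provided near-efficient covers whose sections overlap only in null sets exist, the ratio of tropical to Hausdorff measure is locally $n/\mathcal{L}^{n-1}(\varv^{\perp}\cap\Bn)$. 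This gives the surface-energy formula stated before the lemma,
\begin{equation*}
\sigma^{n-1}_{\mathrm{tr}}(\partial S)=\int_{\partial S}\frac{n}{\mathcal{L}^{n-1}(\varv_S(\varx)^{\perp}\cap\Bn)}\,d\mathcal{H}^{n-1}(\varx).
\end{equation*}
I take this representation as given, as the paper does.

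\emph{Comparing the densities.} Hypothesis \eqref{eq:cond_base_times_height} says precisely that
\[
h_{\Bn}(\varv_S(\varx)) = \frac{n}{\mathcal{L}^{n-1}(\varv_S(\varx)^{\perp}\cap\Bn)}
\]
for $\mathcal{H}^{n-1}$-almost every $\varx$. Hence the two integrands coincide almost everywhere, and the integrals are equal.

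The main obstacle is the rigorous passage from the local planar picture to the global integrals, especially for $\mathcal{M}^{n-1}_{\mathrm{tr}}$. One must control the overlaps of thickenings from distant parts of $\partial S$ and the contribution of the non-regular part of $\partial S$. I would handle both by approximating $\partial S$ with finitely many $C^1$ graphs, using the rectifiability in the definition of admissibility, and bounding the remainder by the fact that $\Bn$ is contained in a Euclidean ball of radius $\sqrt{n}$.
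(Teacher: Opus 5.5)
Your argument is the paper's. The paper gives no separate proof of the lemma. It rests on the two surface-energy representations displayed just before it: density $n/\mathcal{L}^{n-1}(\varv^{\perp}\cap\Bn)$ for $\sigma^{n-1}_{\mathrm{tr}}$ and density $h_{\Bn}(\varv)$ for $\mathcal{M}^{n-1}_{\mathrm{tr}}$, which the hypothesis makes equal. Your slab computation is correct: the width is $2\varepsilon h_{\Bn}(\varv)$ by central symmetry, and the factor $\frac12$ cancels the 2. Taking the $\sigma$-representation as given puts you on the same footing as the paper. The trouble is the step you add to justify the Minkowski representation. Localizing to finitely many $C^1$ graphs does give the lower bound $\liminf_{\varepsilon\to0^+}\mathcal{L}^n(\partial S+\Ben)/(2\varepsilon)\geq\int_{\partial S}h_{\Bn}(\varv_S)\,d\mathcal{H}^{n-1}$. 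It does not give the upper bound.

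Your remainder estimate cannot supply the upper bound. Containing $\Bn$ in the Euclidean ball of radius $\sqrt n$ only bounds the remainder $R$'s contribution by $\sqrt n$ times its Euclidean upper Minkowski content. Minkowski content is not controlled by $\mathcal{H}^{n-1}$: it can be infinite even when $\mathcal{H}^{n-1}(R)=0$. For the same reason, Federer's Minkowski-content theorem (Federer, 3.2.39) needs a closed set that is rectifiable in his sense, i.e. a Lipschitz image of a bounded subset of $\RR^{n-1}$. Countable rectifiability, which is all the paper's definition of admissibility provides, is not enough.

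This is not a technicality under the stated hypotheses. For $n\geq2$, take $S=\Bn\cup K$, where $K$ is a compact countable set disjoint from $\Bn$ with large neighbourhoods. For instance, let $K$ be a translate of $C\times\cdots\times C$ with $C=\{0\}\cup\{1/\log k : k\geq 2\}$, so that $\mathcal{L}^n(K+\Ben)\gtrsim(\log(1/\varepsilon))^{-n}$. Then:
\begin{itemize}
\item $S$ is $n$-admissible in the paper's sense.
\item The hypothesis holds a.e.\ on $\partial S=\partial\Bn\cup K$: it holds on the facets of $\Bn$ by the paper's computation, and $K$ is $\mathcal{H}^{n-1}$-null.
\item $\sigma^{n-1}_{\mathrm{tr}}(\partial S)\leq n(n+1)$, by Theorem~\ref{thm:surface_sphere} plus covering the $j$-th point of $K$ by a ball of radius $\eta 2^{-j}$.
\item Yet $\mathcal{M}^{n-1}_{\mathrm{tr}}(\partial S)=\infty$.
\end{itemize}
So you, and the paper tacitly, need an extra hypothesis. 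One option is that $\partial S$ is a Lipschitz image of a compact subset of $\RR^{n-1}$. Another is a uniform lower density bound $\mathcal{H}^{n-1}(\partial S\cap B_r(\varx))\geq\gamma r^{n-1}$ for all $\varx\in\partial S$ and $0<r<1$. Under either, the anisotropic version of Federer's theorem gives $\mathcal{M}^{n-1}_{\mathrm{tr}}(\partial S)=\int_{\partial S}h_{\Bn}(\varv_S)\,d\mathcal{H}^{n-1}$ directly, with no remainder argument, and your comparison of densities finishes the proof. For polytopes, the only case the paper actually uses, all of this is elementary.
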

\begin{theorem} 
	\label{thm:paralle l_polytope}
	Let $S \in \Rn$, $n \geq 2$,  be an $n$-dimensional polytope
	%of finite Lebesgue measure and
	with each facet of $S$ parallel to a facet of the tropical ball $\Bn$ or to a facet of the tropical dual ball. 
	Then $\mathcal{M}_{\mathrm{tr}}^{n-1}(\partial S) = \sigma^{n-1}_{\mathrm{tr}}(\partial S)$.
\end{theorem}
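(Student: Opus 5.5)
The plan is to reduce the statement to Lemma~\ref{lem:base_times_height}. The boundary of the polytope $S$ is the union of finitely many facets, and the outer normal $\varv_S$ is constant on the relative interior of each facet; the remaining points (lower-dimensional faces) form an $\mathcal{H}^{n-1}$-null set. It therefore suffices to verify condition \eqref{eq:cond_base_times_height},
$$\mathcal{L}^{n-1}(\varv^{\perp}\cap\Bn)\cdot h_{\Bn}(\varv)=n,$$
for every unit vector $\varv$ normal to a facet of $\Bn$ or of $\dualBn$. Since $\Bn$ is centrally symmetric, it is enough to treat one representative of each pair $\pm\varv$.

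First, the facets of $\Bn$. By the proof of Theorem~\ref{thm:surface_sphere}, these facets have normals $\varv={\bf e_j}$ and $\varv=({\bf e_i}-{\bf e_j})/\sqrt2$.
\begin{itemize}
\item For $\varv={\bf e_j}$: the section $\{x_j=0\}\cap\Bn$ is an $(n-1)$-dimensional tropical unit ball, so its Lebesgue measure is $n$ by Theorem~\ref{thm:n-measure}. The support value is $h_{\Bn}({\bf e_j})=\max_{\Bn}x_j=1$. The product is $n$.
\item For $\varv=({\bf e_i}-{\bf e_j})/\sqrt2$: the proof of Theorem~\ref{thm:surface_sphere} already computed $\mathcal{L}^{n-1}(\{x_i=x_j\}\cap\Bn)=\sqrt2\,n$. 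The support value is $h_{\Bn}(\varv)=\max_{\Bn}(x_i-x_j)/\sqrt2=1/\sqrt2$, since $d_{\mathrm{tr}}\le 1$ bounds $x_i-x_j$ by $1$. The product is again $n$.
\end{itemize}

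Next, the facets of $\dualBn$. By the description of $\dualBn$ via $S^*$, their normals are the vertex directions of $\Bn$, namely $\varv=\pm{\bf 1}_I/\sqrt{k}$ with $I\subseteq\{1,\dots,n\}$ and $|I|=k\ge1$.
\begin{itemize}
\item The support value is $h_{\Bn}(\varv)=\langle{\bf 1}_I,{\bf 1}_I\rangle/\sqrt k=\sqrt k$, because on $\Bn$ we have $\sum_{i\in I}x_i\le k$ with equality at the vertex ${\bf 1}_I$.
\item It therefore remains to show $\mathcal{L}^{n-1}\bigl(\{\sum_{i\in I}x_i=0\}\cap\Bn\bigr)=n/\sqrt k$.
\item Pick $m\in I$ and project the section along ${\bf e_m}$ onto $\{x_m=0\}$. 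This projection is injective, and the area factor is $\sqrt k$ (the ratio $|\varv|/|v_m|$). So the claim is equivalent to showing that the projected set $P_I=\{y\in\RR^{n-1}:(y,\,x_m=-\sum_{i\in I\setminus m}y_i)\in\Bn\}$ has volume $n/k$.
\item As a sanity check, for $n=2$, $k=2$ the section is $\{(t,-t):|t|\le\frac12\}$, whose projection has length $1=n/k$.
\end{itemize}

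The main obstacle is computing $\mathcal{L}^{n-1}(P_I)$ in general. My first attempt is to decompose $\Bn$ into the $n+1$ tropical hypercubes $\In{j}$, as in Theorem~\ref{thm:surface_sphere}, and compute each hyperplane section $\{\sum_{i\in I}x_i=0\}\cap\In{j}$ as a section of a unimodular parallelotope.
\begin{itemize}
\item In the tropical basis $\tilde{\bf e}_1,\dots,\widehat{\tilde{\bf e}_j},\dots,\tilde{\bf e}_{n+1}$, the functional $\sum_{i\in I}x_i$ takes the values $1$ on $\tilde{\bf e}_i$ for $i\in I$, $0$ on $\tilde{\bf e}_i$ for $i\notin I$ with $i\le n$, and $-k$ on $\tilde{\bf e}_{n+1}$.
\item Each section is then the zero set of a linear form with small integer coefficients on a unit cube, and its projected volume can be computed by a standard slicing integral over $[0,1]^n$.
\item Summing over $j$ and using the symmetry of $\Bn$ under permutations of the $n+1$ homogeneous coordinates (which act unimodularly on $\Rn$), the contributions should add up to $n/k$.
\end{itemize}

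If that calculation becomes messy, the fallback is to use this permutation symmetry directly. It moves the index set $I$, possibly together with the $(n+1)$-th coordinate, to a standard position, and so should reduce the computation to a single family that can be treated by induction on $k$, starting from the case $k=1$ already settled above.

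Once the condition holds for every facet normal of $S$, Lemma~\ref{lem:base_times_height} gives $\mathcal{M}_{\mathrm{tr}}^{n-1}(\partial S)=\sigma^{n-1}_{\mathrm{tr}}(\partial S)$.
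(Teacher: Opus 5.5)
Your reduction to Lemma~\ref{lem:base_times_height} is correct and agrees with the paper, and so are your treatment of the two families of facet normals of $\Bn$ and the support value $h_{\Bn}(\mathbf{1}_I/\sqrt k)=\sqrt k$. The gap is exactly the step you call the main obstacle: you never prove the identity $\mathcal{L}^{n-1}\bigl(\{\sum_{i\in I}x_i=0\}\cap\Bn\bigr)=n/\sqrt k$ for $k\ge 2$. Both of your routes stop at ``should''. This identity is the whole content of the dual-ball case, since $k=1$ is just the coordinate hyperplane you already handled.

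The fallback would also not work as described. A permutation of the $n+1$ homogeneous coordinates acts on the functional $\sum_{i\in I}X_i-kX_{n+1}$ by permuting its coefficient vector, so it never changes $k$. As soon as it moves the $(n+1)$-th coordinate, it leaves the family of dual normals: for $n=3$ it sends $x_1+x_2$ to $x_1+x_2-2x_3$. So there is no induction on $k$ to extract from it.

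The paper closes the gap by slicing the zonotope description $\Bn=\{\sum_{i=1}^{n+1}a_i\tilde{\mathbf{e}}_i : a_i\in[0,1]\}$ directly. On the hyperplane one needs $a_{n+1}=\frac1k\sum_{i\in I}a_i$, which lies in $[0,1]$ automatically. Hence the section is itself the zonotope generated by the $n$ vectors $\vary_i=\mathbf{e}_i-\frac1k\One$ ($i\in I$) and $\vary_j=\mathbf{e}_j$ ($j\notin I$). Its volume is $\sum_{l=1}^{n}\lvert\det(\varv,\vary_1,\ldots,\widehat{\vary}_l,\ldots,\vary_n)\rvert$. These generators sum to $\Zero$, so all $n$ determinants have the same modulus. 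Subtracting multiples of $\varv$ and of the $\mathbf{e}_j$, $j\notin I$, shows that this modulus is $1/\sqrt k$.

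Your own hypercube route can also be finished in a few lines:
\begin{enumerate}
\item For $j\le n$, in the coordinates $(a_i)_{i\neq j}$ of $\In{j}$, the slice is the graph of $a_{n+1}=\frac1k\sum_{i\in I\setminus\{j\}}a_i$ over the whole unit cube of the remaining coordinates.
\item So the prism it sweeps out in direction $\tilde{\mathbf{e}}_{n+1}$ has volume $1$, because the coordinate change is unimodular.
\item Since $\langle\tilde{\mathbf{e}}_{n+1},\varv\rangle=-\sqrt k$, the slice has area $1/\sqrt k$.
\item For $k\ge2$, the cube $\In{n+1}$ meets the hyperplane only in the null set $\{x_i=0,\ i\in I\}$.
\item The hyperplane contains no facet of any $\In{j}$, since the functional vanishes on only $n-k$ of the generators.
\end{enumerate}
Hence the $n$ pieces add up to $n/\sqrt k$.
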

\begin{proof}
	Let $F \subset \partial S$ be a facet of $S$ and we may assume, w.l.o.g., that $F$ passes through the origin, so that $F$ lies on a hyperplane $\varv^{\perp}$, $\varv = (v_1,\ldots,v_n) \in \Rn$ a unit vector.
	If $F$ is parallel to a facet of $\Bn$ then either $\varv = \pm {\bf e_i}$, for some $1 \leq i \leq n$, or $\varv = \frac{1}{\sqrt{2}}({\bf e_i}-{\bf e_j})$, for some $1 \leq i \neq j \leq n$. 
	If $F$ is parallel to a facet of the dual ball, then $\varv$ is in direction of a vertex of $\Bn$, so that $\varv = \pm \frac{1}{\sqrt{k}}\sum_{j=1}^{k} {\bf e_{i_j}}$, for some $1 \leq k \leq n$ and $1 \leq i_1 < \cdots < i_k \leq n$.
	So, we may assume, w.l.o.g., that either $\varv = \frac{1}{\sqrt{k}}\sum_{i=1}^{k} {\bf e_{i}}$ or $\varv = \frac{1}{\sqrt{2}}({\bf e_1}-{\bf e_2})$.
	
	As mentioned above, $\Bn$ is the zonotope
	\begin{equation}
		\Bn =  \{ \varx = \sum_{i=1}^{n+1} a_i {\bf \tilde{e}_i} : 0 \leq a_i \leq 1\}.
	\end{equation}
	That is, when $\varx \in \Bn$ then
	\begin{equation}
		\label{eq:ball_element}
		\varx = (a_1-a_{n+1}, \ldots, a_n-a_{n+1}),
	\end{equation}
	where $a_i \in [0,1]$, for $i=1,\ldots,n+1$.
	
	Let $\varv = \frac{1}{\sqrt{k}}\sum_{i=1}^{k} {\bf e_{i}}$ and $\varx \in \varv^{\perp} \cap \Bn$. Since $\langle \varx, \varv \rangle = 0$ then $a_{n+1} = \frac{1}{k}\sum_{i=1}^{k} a_i$ and
	\begin{equation}
		\varx = \sum_{i=1}^{k} a_i({\bf e_{i}} - \frac{1}{k} \One) + \sum_{j=k+1}^{n} a_j{\bf e_{j}}.
	\end{equation}
	It follows that $\varv^{\perp} \cap \Bn$ is the zonotope generated by the vectors
	\begin{equation}
		\vary_i = {\bf e_{i}} - \frac{1}{k} \One, \; \mathrm{for} \; i=1,\ldots,k, \; \mathrm{and} \; {\bf \vary_j = e_{j}}, \; \mathrm{for} \;  j=k+1,\ldots,n.
	\end{equation}
	Since $v$ is a unit vector that is orthogonal to the $(n-1)$-dimensional central section $\varv^{\perp} \cap \Bn$, then we can add it to the generators of $\varv^{\perp} \cap \Bn$ without changing the volume, and we get the following formula.
	\begin{equation}
			\mathcal{L}^{n-1}(\varv^{\perp} \cap \Bn) = \sum_{i=1}^{n} \vert \det (\varv, \vary_1, \ldots, \widehat{\vary}_i, \ldots, \vary_n) \vert = \frac{n}{\sqrt{k}},
	\end{equation}
	since one verifies that each of the $n$ determinants evaluates to $\pm \frac{1}{\sqrt{k}}$.
	\begin{equation}
		h_{\Bn}(\varv) = \max_{I \subseteq \{1,\ldots,n\}} \{\vert \sum_{i \in I} v_i \vert\} = k \cdot \frac{1}{\sqrt{k}} = \sqrt{k}.
	\end{equation}
	It follows that
	\begin{equation}
		\mathcal{L}^{n-1}(\varv^{\perp} \cap \Bn) \cdot h_{\Bn}(\varv) = \frac{n}{\sqrt{k}} \cdot \sqrt{k} = n.
	\end{equation}
	
	The other case is when $\varv = \frac{1}{\sqrt{2}}({\bf e_1}-{\bf e_2})$. As before, when $\varx \in \Bn$ then it can be expressed as in \eqref{eq:ball_element} and the condition $\langle \varx, \varv \rangle = 0$ translates to $a_1=a_2$. Thus,  
	\begin{equation}
		\varx = a_1({\bf \tilde{e}_1} + {\bf \tilde{e}_2}) + \sum_{i=3}^{n+1} a_i {\bf \tilde{e}_i} : 0 \leq a_i \leq 1\}.
	\end{equation}
	Thus, $\varv^{\perp} \cap \Bn$ is the zonotope generated by the vectors
	\begin{equation}
		\vary_1={\bf e_1} + {\bf e_2}, \; \vary_i = {\bf e_{i+1}}, \; \mathrm{for} \; i=2,\ldots,n-1, \; \mathrm{and} \; \vary_n = -\One.
	\end{equation}
	As before, we compute the volume of the central section of $\Bn$ as
	\begin{equation}
		\mathcal{L}^{n-1}(\varv^{\perp} \cap \Bn) = \sum_{i=1}^{n} \vert \det (\varv, \vary_1, \ldots, \widehat{\vary}_i, \ldots, \vary_n) \vert = \sqrt{2}n,
	\end{equation}
	since each of the $n$ determinants evaluates to $\pm \sqrt{2}$.
	\begin{equation}
		h_{\Bn}(\varv) = \max_{I \subseteq \{1,\ldots,n\}} \{\vert \sum_{i \in I} v_i \vert\} = v_1 = \frac{1}{\sqrt{2}}.
	\end{equation}
	We get
	\begin{equation}
		\mathcal{L}^{n-1}(\varv^{\perp} \cap \Bn) \cdot h_{\Bn}(\varv) = \sqrt{2}n \cdot \frac{1}{\sqrt{2}} = n.
	\end{equation}

	Since $\mathcal{L}^{n-1}(\varv^{\perp} \cap \Bn)\cdot h_{\Bn}(\varv) = n$ for all facets of $S$ then by Lemma~\ref{lem:base_times_height},
	$\mathcal{M}_{\mathrm{tr}}^{n-1}(\partial S) = \sigma^{n-1}_{\mathrm{tr}}(\partial S)$.
\end{proof}

We have seen that $\mathcal{M}^{n-1}_{\mathrm{tr}}(\partial S) = \sigma^{n-1}_{\mathrm{tr}}(\partial S)$ in the case that $S$ is a polytope with facets parallel to those of a tropical ball.
This implies that the tropical isoperimetric inequality as stated in Theorem~\ref{thm:tr_isoperim_inequality} holds also for the tropical measure $\sigma_{\mathrm{tr}}$, as long as we confine ourselves to polytopes with facets parallel to those of a tropical ball.
Clearly, equality between the two measures holds in dimension $n=1$.
For $n \geq 2$, we need to check that condition~\eqref{eq:cond_base_times_height}, $\mathcal{L}_{\mathrm{tr}}^{n-1}(\varv^{\perp} \cap \Bn) \cdot h_{\Bn}(\varv)=n$ is satisfied.
This can be easily shown to hold for $n=2$ through elementary trigonometric arguments ($\frac{2}{\cos(\alpha)}\cdot\cos(\alpha)=2$ or $\frac{\sqrt{2}}{\cos(\alpha)}\cdot\sqrt{2}\cos(\alpha)=2$).
By approximations with polygons, we have the following.
\begin{proposition}
	\label{prop:measures_equality}
	Let $S \subset \RR^2$ be $2$-admissible. Then its tropical perimeter satisfies $\mathcal{M}^{1}_{\mathrm{tr}}(\partial S) = \sigma^{1}_{\mathrm{tr}}(\partial S)$.
\end{proposition}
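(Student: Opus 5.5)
The plan is to reduce to Lemma~\ref{lem:base_times_height}. For $n=2$ this means checking that for every unit vector $\varv \in \RR^2$
\begin{equation*}
	\mathcal{L}^{1}(\varv^{\perp} \cap B^2_{\mathrm{tr}}) \cdot h_{B^2_{\mathrm{tr}}}(\varv) = 2 .
\end{equation*}
If this identity holds for all directions, then condition \eqref{eq:cond_base_times_height} holds at every point of $\partial S$ where the outer normal exists. For a $2$-admissible set this is $\mathcal{H}^1$-almost every point of the rectifiable boundary, and the lemma then gives the equality directly. To make this honest rather than relying on the lemma as a black box, I would also run a polygonal approximation argument, sketched below.

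The identity itself is elementary, because both factors are explicit for the hexagon $B^2_{\mathrm{tr}}$. By central symmetry and the symmetries $(x,y)\mapsto(y,x)$ and $\varx\mapsto -\varx$, which preserve the tropical ball, it suffices to treat $\varv=(\cos\theta,\sin\theta)$ in a single sector, say $\theta\in[0,\pi/4]$.

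In that sector the support function is $h(\varv)=\phi^*_{\mathrm{tr}}(\varv)=\max\{v_1+v_2,\ v_1\}$, with $\max$ taken over the vertex set $S^*$. The chord $\varv^\perp\cap B^2_{\mathrm{tr}}$ is the segment from the origin in direction $(-\sin\theta,\cos\theta)$ out to the boundary, doubled. Its half-length is $1/\langle \mathbf{u},\mathbf{w}\rangle$, where $\mathbf{u}$ is the unit chord direction and $\mathbf{w}$ is the appropriate facet normal scaled so the facet is $\langle \mathbf{w},\varx\rangle=1$, namely $(0,1)$ or $(-1,1)$.

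I expect the breakpoint of $h$ and the breakpoint of the chord (which facet the chord hits) to occur at the same angle, since both are governed by the vertices and edges of the hexagon. Then, in each subsector:
\begin{enumerate}
	\item either $h=\cos\alpha$ and the chord length is $2/\cos\alpha$,
	\item or $h=\sqrt{2}\cos\alpha$ and the chord length is $\sqrt{2}/\cos\alpha$,
\end{enumerate}
for a suitable angle $\alpha$. In both cases the product is $2$, matching the trigonometric remark preceding the proposition. Checking that the breakpoints coincide is the main obstacle I expect at this stage, and I would verify it at the vertex directions $(1,1)$ and $(1,0)$.

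For the approximation step, I would approximate $\partial S$ by inscribed polygons $P_m$ whose vertices lie on $\partial S$ and have mesh tending to $0$. Then $\sigma^1_{\mathrm{tr}}(\partial P_m)=l_{\mathrm{tr}}(\partial P_m)$ by the proposition on tropical length of curves, and this converges to $l_{\mathrm{tr}}(\partial S)=\sigma^1_{\mathrm{tr}}(\partial S)$ by the total-variation form \eqref{eq:total_variation}.

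On the other side, for a polygon the identity above shows, edge by edge, that $\mathcal{M}^1_{\mathrm{tr}}(\partial P_m)=\sum_e \mathcal{H}^1(e)\,h(\varv_e)$. This uses the fact that $\mathcal{L}^2(\partial P_m+B^2_{\mathrm{tr},\varepsilon})/2\varepsilon$ converges to that sum, since vertex contributions are $O(\varepsilon^2)$. The same sum equals $\sum_e \mathcal{H}^1(e)\cdot 2/\mathcal{L}^1(\varv_e^\perp\cap B^2_{\mathrm{tr}})$, which is the tropical length of the edge.

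Finally, I would pass to the limit on the Minkowski side. Here $\mathcal{M}^1_{\mathrm{tr}}(\partial S)=\int_{\partial S} h(\varv_S)\,d\mathcal{H}^1$. This follows from the derivative form in Definition~\ref{def:tr_Steiner} applied to rectifiable boundaries, i.e.\ the anisotropic surface-energy representation. The right-hand side is the limit of $\int_{\partial P_m} h(\varv)\,d\mathcal{H}^1$ because the normals converge almost everywhere and $h$ is continuous.

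The delicate point is justifying that the Minkowski content of $\partial S$ exists and equals this integral for a general $2$-admissible boundary. I would handle it by citing the standard Minkowski-content theorem for compact $1$-rectifiable sets (Federer), adapted to the anisotropic gauge $B^2_{\mathrm{tr}}$, rather than reproving it.
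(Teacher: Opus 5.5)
Your route is the paper's: check condition \eqref{eq:cond_base_times_height} for $n=2$ by an elementary computation on the hexagon (the paper's $\frac{2}{\cos\alpha}\cdot\cos\alpha$ and $\frac{\sqrt2}{\cos\alpha}\cdot\sqrt2\cos\alpha$), then pass to general boundaries via Lemma~\ref{lem:base_times_height} and polygons. One step fails as written: the symmetry reduction to $\theta\in[0,\pi/4]$.

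The identity involves Euclidean lengths and inner products, so you may only use Euclidean isometries that preserve $B^2_{\mathrm{tr}}$. The tropical disk is not a regular hexagon: its edges have Euclidean lengths $1,1,\sqrt2,1,1,\sqrt2$. Its isometries are only $\pm\mathrm{id}$ and $\pm$ the swap $(x,y)\mapsto(y,x)$; note that central symmetry and $\varx\mapsto-\varx$ are the same map. A fundamental domain for unit normals is therefore a quarter circle. The images of $[0,\pi/4]$ cover only the directions with $v_1v_2\ge 0$, so for example the normal $\frac{1}{\sqrt2}(1,-1)$ of the facet $x-y=1$ is never treated.

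In fact, on $[0,\pi/4]$ only your case~(2) occurs: $h=v_1+v_2$, the chord meets the facet $y-x=1$, and there is no breakpoint. Your formulas $h=\max\{v_1+v_2,v_1\}$ and facet normals $(0,1)$, $(-1,1)$ are exactly right on $\theta\in[-\pi/4,\pi/4]$. There case~(1) holds on $[-\pi/4,0]$ and case~(2) on $[0,\pi/4]$. The single switch is at $\varv=(1,0)$, where the chord passes through the vertex $(0,1)$. So use that sector instead.

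You can also avoid cases entirely. With $\mathbf{u}=(-v_2,v_1)$ the chord is $\{t\mathbf{u}:\abs{t}\le 1/\tnorm{\mathbf{u}}\}$. Moreover $\tnorm{\mathbf{u}}=\max\{v_1,-v_2,0\}-\min\{v_1,-v_2,0\}=\max\{\abs{v_1},\abs{v_2},\abs{v_1+v_2}\}=\dualtnorm{\varv}=h(\varv)$ by \eqref{eq:tr_dual_norm}. This is the fact that $W^2_{\mathrm{tr}}$ is the $90^\circ$ rotation of $B^2_{\mathrm{tr}}$, and it gives product $2$ in every direction.

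On the limiting step there is no gap relative to the paper, but be clear where the weight lies. Polygons alone cannot carry the Minkowski side, because $\mathcal{M}^1_{\mathrm{tr}}$ is not continuous under Hausdorff convergence. Everything rests on the representation $\mathcal{M}^1_{\mathrm{tr}}(\partial S)=\int_{\partial S}h(\varv_S)\,d\mathcal{H}^1$, which the paper also takes for granted in Lemma~\ref{lem:base_times_height}.

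Once you cite that representation, the polygons are redundant. Integrate the pointwise identity $h(\varv_S)=\tnorm{\gamma'}$ along an arclength parametrization $\gamma$ of $\partial S$; this gives $l_{\mathrm{tr}}(\partial S)=\sigma^1_{\mathrm{tr}}(\partial S)$ directly. Likewise, ``normals converge almost everywhere'' is not needed: by the identity, $\int_{\partial P_m}h\,d\mathcal{H}^1=l_{\mathrm{tr}}(\partial P_m)$, which converges by \eqref{eq:total_variation}.
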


\noindent {\bf Dimension} ${\bf n=3.}$
Let $\varv = (a,b,c)$, $c \neq 0$, be a unit vector.
We parametrize the plane $ax+by+cz=0$ by $(x,y,z)=(s,t,-\frac{a}{c}s-\frac{b}{c}t)=r(s,t)$.
The Jacobian is
$$ 
J = \,\parallel \partial_s r \times \partial_t r \parallel \, =
 \, \left\lVert \left(\tfrac{a}{c}, \tfrac{b}{c}, 1 \right) \right\rVert = \frac{\sqrt{a^2+b^2+c^2}}{\sqrt{c^2}} = \frac{1}{\vert c \vert}.
$$
Let $D$ be the central section $\varv^{\perp} \cap \Bthree$ in the $(s,t)$-coordinates. It is defined by the inequalities $\vert s \vert \leq 1$, $\vert t \vert \leq 1$, $\vert as+bt \vert \leq \vert c \vert$, $\vert s-t \vert \leq 1$, $\vert (a+c)s+bt \vert \leq \vert c \vert$ and $\vert as+(b+c)t \vert \leq \vert c \vert$. Thus, the section area is
$$
\mathcal{L}^{2}(\varv^{\perp} \cap \Bthree) = \mathcal{L}_{(s,t)}^{2}(D) \cdot J.
$$
The next examples show that in dimension 3 the two notions of surface area agree for some sets $S$ but for others $\mathcal{M}^{2}_{\mathrm{tr}}(\partial S) > \sigma^{2}_{\mathrm{tr}}(\partial S)$, depending on the direction of the normal vectors.
\begin{examples}
	\begin{enumerate}
		\item Let $F$ be a facet of a polytope supported by the plane $x+y-2z=0$, with $\varv = \frac{1}{\sqrt{6}}(1,1,-2)$ the unit normal vector.
		Then $(x,y,z) = (s,t,\frac{s+t}{2})$, $J = \sqrt{\frac{3}{2}}$,
		$D = ([-1,1] \times [-1,1]) \cap \{\lvert s-t \rvert \leq 1 \}$
		and $\mathcal{L}_{(s,t)}^{2}(D)=3$.
		So, the area of the base is $\mathcal{L}^{2}(\varv^{\perp} \cap \Bthree) = 3 \cdot \sqrt{\frac{3}{2}}$.
		The height is $h_{\Bthree}(\varv) = \langle (1,1,0), \varv \rangle = \sqrt{\frac{2}{3}}.$
		We have
		$$
		\mathcal{L}^{2}(\varv^{\perp} \cap \Bthree) \cdot h_{\Bthree}(\varv) =
		3 \cdot \sqrt{\frac{3}{2}} \cdot \sqrt{\frac{2}{3}} = 3,
		$$
		showing that in this example $\mathcal{M}^{2}_{\mathrm{tr}}(F) = \sigma^{2}_{\mathrm{tr}}(F)$.
		\item Let $F$ be on the plane $2x+2y-z=0$, $\varv = \frac{1}{3}(2,2,-1)$.
		Then $(x,y,z) = (s,t,2(s+t)$, $J = 3$
%		$D = ([-1,1] \times [-1,1]) \cap \{\lvert s-t \rvert \leq 1 \}$
		and $\mathcal{L}_{(s,t)}^{2}(D)=\frac{11}{12}$.
		So, the area of the base is $\mathcal{L}^{2}(\varv^{\perp} \cap \Bthree) = 3 \cdot \frac{11}{12}=\frac{11}{4}$.
		The height is $h_{\Bthree}(\varv) = \langle (1,1,0), \varv \rangle = \frac{4}{3}.$
		We have
		$$
			\mathcal{L}^{2}(\varv^{\perp} \cap \Bthree) \cdot h_{\Bthree}(\varv) =
			\frac{11}{4} \cdot \frac{4}{3} = \frac{11}{3} > 3,
		$$
		showing that in this example $\mathcal{M}^{2}_{\mathrm{tr}}(F) > \sigma^{2}_{\mathrm{tr}}(F)$.

	\end{enumerate}
\end{examples}

\section{Tropical honeycomb theorem in $\Rn$}
\label{sec:honecomb_Rn}
As a corollary of the tropical isoperimetric inequality we have the following.
\begin{theorem}[Tropical honeycomb theorem in $\RR^n$]
	Let $G \subset \Rn$ be an $(n-1)$-dimensional connected set dividing $\Rn$ into infinitely many bounded simply connected components of (tropical) unit measure and having an ($n-1$)-rectifiable boundary. Let $C = \Rn \setminus G$. Let $B^n_R$ be the standard $n$-dimensional ball of radius $R$ with center at the origin. Then
	\begin{equation}
		\label{eq:tr_honecomb_Rn}
		\limsup_{R \to \infty}\frac{\mathcal{M}^{n-1}_{\mathrm{tr}}(G \cap B^n_R)} {\mathcal{L}^n(B^n_R)} \geq \frac{n \sqrt[n]{n+1}}{2},
	\end{equation}
	with equality holding for components in the form of tropical unit balls.
\end{theorem}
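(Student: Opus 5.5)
Following the proof of Theorem~\ref{thm:tr_honecomb}, the plan is to apply the tropical isoperimetric inequality (Theorem~\ref{thm:tr_isoperim_inequality}) to each bounded component and then sum over the components that meet a large ball.

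\emph{Lower bound for one cell.} Let $C_\alpha$ be a component of $C$. It has unit measure, so $\mathcal{M}^{n}_{\mathrm{tr}}(C_\alpha)=\mathcal{L}^n(C_\alpha)=1$. Assuming its closure is $n$-admissible, Theorem~\ref{thm:tr_isoperim_inequality} gives
\begin{equation*}
\mathcal{M}^{n-1}_{\mathrm{tr}}(\partial C_\alpha)\geq n\sqrt[n]{n+1}.
\end{equation*}
Equality holds exactly when $\overline{C_\alpha}$ is a tropical ball. Since $\mathcal{L}^n(\BRn)=(n+1)R^n=1$, that ball has radius $R=(n+1)^{-1/n}$. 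Corollary~\ref{cor:ratio} then gives its surface area as $n/R = n\sqrt[n]{n+1}$, which confirms the constant.

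\emph{Counting cells in $B^n_R$.} Each point of $G$ off a null set lies in the boundary of exactly two components. To justify this I would use that $G$ is $(n-1)$-rectifiable: at $\mathcal{H}^{n-1}$-a.e.\ point of $G$ there is an approximate tangent plane with exactly one component on each side. I would also use that the surface measure $\mathcal{M}^{n-1}_{\mathrm{tr}}$ is the integral $\int\phi^*_{\mathrm{tr}}(\varv)\,d\mathcal{H}^{n-1}$, which is additive over disjoint rectifiable pieces. This gives
\begin{equation*}
\mathcal{M}^{n-1}_{\mathrm{tr}}(G\cap B^n_R)\geq \tfrac12\sum_{C_\alpha\subset B^n_R}\mathcal{M}^{n-1}_{\mathrm{tr}}(\partial C_\alpha)\geq \tfrac{n\sqrt[n]{n+1}}{2}\, N_R,
\end{equation*}
where $N_R$ is the number of components contained in $B^n_R$.

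\emph{Controlling the boundary cells.} What remains is to show $\limsup N_R/\mathcal{L}^n(B^n_R)\geq 1$. I expect this to be the main obstacle, because a component of unit volume can have large diameter, so the cells straddling $\partial B^n_R$ need not occupy a negligible volume.

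My first approach is a contradiction argument. If $N_R\leq(1-\eta)\mathcal{L}^n(B^n_R)$ for all large $R$, then a proportion at least $\eta$ of the volume of $B^n_R$ lies in cells meeting $\partial B^n_R$. I would compare $R$ with $R+\rho$ and show that such long cells must carry boundary measure growing at least like their diameter, which would make the ratio exceed the bound anyway. Concretely, I would use the isoperimetric deficit together with a relative isoperimetric bound on the slices $C_\alpha\cap B^n_R$. If this becomes too delicate, I would fall back on the standing "admissible" conventions of this section and assume the components have uniformly bounded diameter. In that case the straddling cells lie in a shell of width $O(1)$, whose volume is $O(R^{n-1})=o(\mathcal{L}^n(B^n_R))$, and the inequality follows immediately.

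\emph{Equality.} For the tiling by translates of the tropical ball of unit volume given by Theorem~\ref{thm:honeycomb} (rescaled), each cell has $\mathcal{M}^{n-1}_{\mathrm{tr}}(\partial C_\alpha)=n\sqrt[n]{n+1}$ and each facet is shared by exactly two cells. The cells meeting $\partial B^n_R$ contribute $O(R^{n-1})$, so the limit equals $\frac{n\sqrt[n]{n+1}}{2}$.
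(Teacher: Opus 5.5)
Your skeleton matches the paper's proof. You use the per-cell bound $n\sqrt[n]{n+1}$ from Theorem~\ref{thm:tr_isoperim_inequality}, count walls at most twice, treat cells near $\partial B^n_R$ as negligible, and get equality from the rescaled tiling of Theorem~\ref{thm:honeycomb}. You are right that the paper's sketch justifies ``boundary cells are negligible'' only for the ball tiling. Your handling of that step, however, has a genuine gap.

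Discarding every cell not contained in $B^n_R$ and then trying to prove $\limsup_R N_R/\mathcal{L}^n(B^n_R)\ge 1$ is the wrong reduction, because that limit can be strictly less than $1$. For $n\ge 3$, start from a unit-volume tiling and give each cell at distance $r$ from the origin a very thin tentacle running radially out to distance $2r$. Route the tentacles in grooves along walls so every cell stays simply connected, and readjust volumes. Then $N_R\approx 2^{-n}\mathcal{L}^n(B^n_R)$. The same example refutes the claim behind your contradiction argument: a tube of length $L$ and radius $\rho$ has surface of order $\rho^{n-2}L$, so for $n\ge 3$ long cells need not carry boundary measure comparable to their diameter. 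The fallback of uniformly bounded diameters adds a hypothesis that is not in the statement.

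The missing idea is to truncate the straddling cells instead of discarding them. For a.e.\ $R$, apply Theorem~\ref{thm:tr_isoperim_inequality} to $C_\alpha\cap B^n_R$. Set $t_\alpha=\mathcal{L}^n(C_\alpha\cap B^n_R)\in[0,1]$ and use $t^{(n-1)/n}\ge t$ on $[0,1]$:
\begin{equation*}
\int_{\partial C_\alpha\cap B^n_R}\phi^*_{\mathrm{tr}}(\varv)\,d\mathcal{H}^{n-1}+\int_{C_\alpha\cap\partial B^n_R}\phi^*_{\mathrm{tr}}(\varv)\,d\mathcal{H}^{n-1}\ \ge\ n\sqrt[n]{n+1}\,t_\alpha^{(n-1)/n}\ \ge\ n\sqrt[n]{n+1}\,t_\alpha .
\end{equation*}
Now sum over all $\alpha$:
\begin{itemize}
\item The first integrals add up to at most $2\mathcal{M}^{n-1}_{\mathrm{tr}}(G\cap B^n_R)$. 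At $\mathcal{H}^{n-1}$-a.e.\ point of $G$, at most two cells have it in their reduced boundary. This is the correct form of your ``exactly two'', since a slit may have the same cell on both sides.
\item The second integrals add up to at most $\int_{\partial B^n_R}\phi^*_{\mathrm{tr}}\,d\mathcal{H}^{n-1}\le\sqrt{n}\,\mathcal{H}^{n-1}(\partial B^n_R)=O(R^{n-1})$.
\item $\sum_\alpha t_\alpha=\mathcal{L}^n(B^n_R)$, because $\mathcal{L}^n(G)=0$.
\end{itemize}
Hence
\begin{equation*}
\frac{\mathcal{M}^{n-1}_{\mathrm{tr}}(G\cap B^n_R)}{\mathcal{L}^n(B^n_R)}\ \ge\ \frac{n\sqrt[n]{n+1}}{2}-O(1/R).
\end{equation*}
This gives the inequality, even with $\liminf$, with no control on the shape or diameter of the cells. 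Your equality argument for the tropical-ball tiling is fine as written.
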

\begin{proof}
	The proof follows from Theorem~\ref{thm:tr_isoperim_inequality}, the tropical isoperimetric inequality, and Theorem~\ref{thm:honeycomb}, similar to the proof of Theorem~\ref{thm:tr_honecomb} in dimension 2.
	By Theorem~\ref{thm:tr_isoperim_inequality}, for an $n$-dimensional body of unit (tropical) volume the minimal surface area is $n\sqrt[n]{n+1}$, which is  achieved for tropical balls. 
	Since in the tiling of $\Rn$ each facet belongs to two tropical balls and since the total surface of the tropical balls that are in the boundary of the ball $B_{R}^n$ can be ignored in the ratio when $R \to \infty$ (it is of order $O(R^{n-1})$, whereas the volume of the ball is of order $O(R^n)$), we get that the limit in \eqref{eq:tr_honecomb_Rn} for tropical balls is $\frac{n \sqrt[n]{n+1}}{2}$.  
\end{proof}
\subsection{Tiling of $\RR^3$}
	The lower bound of the honeycomb theorem in space in the tropical setting is $\frac{3}{2} \sqrt[3]{4} \approx 2.381$. In order to compare it to tiling of $\RR^3$ in the Euclidean setting, we use the data from the table in \cite{WP94} that shows the isoperimetric quotient
	\begin{equation}
		\label{eq:isoperim_quotient}
		\frac{36\pi V^2}{A^3}
	\end{equation}
	of different structures, where $V$ is the volume and $A$ is the surface area.
	This expression is used for comparison to the standard sphere, which has value 1 (a larger value refers to a more efficient tiling). Of course, a tiling of $\RR^3$ with spheres is not possible, and the best arrangement (Kepler's conjecture) was proved in 1998 by Thomas Hales (the proof was computer-assisted and a formal proof was given in 2017 \cite{Hal17}). The best known value for the isoperimetric quotient \eqref{eq:isoperim_quotient} for tiling $\RR^3$ is that of Weaire and Phelan \cite{WP94}, which is composed of two kinds of cells: an irregular dodecahedron and a truncated hexagonal trapezohedron, both being slightly curved. It is larger than the suggestion of Kelvin, more than hundred years earlier, consisting of cells in the form of a slightly curved truncated octahedron. Table~\ref{tab:isoperim_quotient} shows the values of the isoperimetric quotient for tiling with Weaire-Phelan cells, with those of Kelvin and also with cubes (taken from \cite{WP94}), in addition to $3$-dimensional tropical balls $\Bthree$ in the Euclidean setting (where the tiling is not supposed to be optimal) and in the tropical setting (the best quotient): $\frac{36\pi 4^2}{(3 \cdot 4)^3} = \frac{\pi}{3} \approx 1.047$.
\begin{center}
\begin{table}[h]
	\begin{tabular}{|c|c|} 
		\hline
		Structure & Isoperimetric quotient \\
		\hline \hline
		Standard sphere  & 1.000 \\
		\hline
		Weaire-Phelan & 0.764  \\
		\hline
		Kelvin & 0.757  \\
		\hline
		Cube & 0.524  \\
		\hline
		Tropical ball (Euclidean measure) & 0.595  \\
		\hline
		Tropical ball (tropical measure) & 1.047  \\
		\hline
	\end{tabular}	
	\vspace{5pt}
	\caption{Isoperimetric quotient $\frac{36\pi V^2}{A^3}$ for different 3D structures}
	\label{tab:isoperim_quotient}
\end{table}
\end{center}

\bibliographystyle{plain}
\bibliography{Tropical_measure}
\end{document}